\newcommand{\thisday}{June 16, 2009}
\renewcommand{\tilde}{\widetilde}
\newcommand{\Rbb}{\mathbb{R}}
\newcommand{\Nbb}{\mathbb{N}}
\newcommand{\Cbb}{\mathbb{C}}
\newcommand{\Z}{\mathbb{Z}}
\newcommand{\xb}{\mathbf{x}}
\newcommand{\eb}{\mathbf{e}}
\newcommand{\pb}{\mathbf{p}}
\newcommand{\zerob}{\mathbf{0}}
\newcommand{\FC}{\mathcal{F}}
\newcommand{\LC}{\mathcal{L}}
\newcommand{\xib}{\boldsymbol{\xi}}
\newcommand{\omeb}{\boldsymbol{\omega}}
\newcommand{\dr}{\mathrm{d}}
\newcommand{\ir}{\mathrm{i}}
\newcommand{\er}{\mathrm{e}}
\newcommand{\NC}{\mathcal{N}}
\newcommand{\RC}{\mathcal{R}}
\newcommand{\dist}{\operatorname{dist}}
\newcommand{\supp}{\operatorname{supp}}
\newcommand{\chiF}{\widehat{\chi_\Omega}(\xib)}
\newcommand{\vol}{\operatorname{vol}}
\newcommand{\const}{\operatorname{const}}
\newcommand{\re}{\operatorname{Re}}
\newcommand{\im}{\operatorname{Im}}
\newcommand{\talp}{\alpha}
\newcommand{\tilr}{r}
\newcommand{\trm}{\tilr_-}
\newcommand{\eps}{\epsilon}
\theoremstyle{plain}
\newtheorem{thm}{Theorem}[section]
\newtheorem{lem}[thm]{Lemma}
\newtheorem{lemma}[thm]{Lemma}
\newtheorem{cor}[thm]{Corollary}
\newtheorem{conj}[thm]{Conjecture}
\theoremstyle{remark}
\newtheorem{defn}[thm]{Definition}
\newtheorem{exam}[thm]{Example}
\newtheorem{remark}[thm]{Remark}
\numberwithin{equation}{section}
\begin{document}

\title{Fourier transform, null variety, and Laplacian's eigenvalues\thanks{The research has been supported by the Royal Society International collaborative grant with Chile. The work of RB has also been supported by CONICYT/PBCT (Chile) Proyecto Anillo de Investigaci\'on en Ciencia y Tecnolog\'\i a ACT30/2006. The work of LP has been also supported by a Leverhulme Trust grant.}}
\author{
Rafael Benguria\\
\normalsize\small Facultad de F\'{\i}sica\\
\normalsize\small P. Universidad Cat\'{o}lica de Chile\\
\normalsize\small Casilla 306, Santiago 22\\
\normalsize\small Chile\\
\normalsize\small {\sffamily rbenguri@fis.puc.cl}
\and
Michael Levitin\\
\normalsize\small Cardiff School of Mathematics\\ 
\normalsize\small Cardiff University, and WIMCS\\
\normalsize\small Senghennydd Road, Cardiff CF24 4AG\\
\normalsize\small United Kingdom\\
\normalsize\small {\sffamily Levitin@cardiff.ac.uk}
\and
Leonid Parnovski\\
\normalsize\small Department of Mathematics\\
\normalsize\small University College London\\
\normalsize\small Gower Street, London WC1E 6BT\\
\normalsize\small United Kingdom\\
\normalsize\small {\sffamily leonid@math.ucl.ac.uk}
}
%
%
%
\date{\tiny version 7 \qquad\qquad \thisday}

\maketitle
\begin{abstract} We consider a quantity $\kappa(\Omega)$ --- the distance to the origin from the null variety of the Fourier transform of the characteristic function of $\Omega$. We conjecture, firstly, that $\kappa(\Omega)$ is maximized, among all convex balanced domains $\Omega\subset\Rbb^d$ of a fixed volume,  by a ball, and also that $\kappa(\Omega)$ is bounded above
by the square root of the second Dirichlet eigenvalue of $\Omega$. We prove some weaker versions of these conjectures in dimension two, as well as their validity for domains asymptotically close to a disk, and also discuss further links between $\kappa(\Omega)$ and the eigenvalues of the Laplacians.
\end{abstract}
{\small \textbf{Keywords:} Laplacian, Dirichlet eigenvalues, Neumann eigenvalues, eigenvalue estimates, Fourier transform, characteristic function, Pompeiu problem, Schiffer's conjecture, convex sets}

\

\noindent{\small \textbf{2000 Mathematics Subject Classification:}
42B10, 35P15, 52A40}
\newpage
%
\section{Introduction}

Let $\Omega$ be a bounded open domain in $\Rbb^d$ with boundary $\partial\Omega$, let $\xb = (x_1,\ldots,x_d)$ be a vector of Cartesian coordinates in $\Rbb^d$, and let
\[
\chi_{\Omega}(\xb)=\begin{cases}
1,\qquad&\text{if }\xb\in\Omega,\\
0\qquad&\text{if }\xb\not\in\Omega
\end{cases}
\]
denote the characteristic function of $\Omega$.

The complex  Fourier transform of $\chi_{\Omega}(\xb)$,
\[
\chiF=\FC[\chi_{\Omega}](\xib):=
\int_{\Omega} \er^{\ir\xib\cdot\xb}\,\dr\xb
\]
or, more importantly, its \emph{complex null variety}, or \emph{null set},
\[
\NC_\Cbb(\Omega):=\{\xib\in\mathbb{C}^d:\chiF=0\}
\]
has been studied extensively. Particular attention has been attracted by the role it plays in numerous attempts to prove the famous \emph{Pompeiu problem} and \emph{Schiffer's conjecture}. We can refer for example to  \cite{Agr, Avi, Ber, BroKah, BroSchTay, GaSe, Kob1, Kob2}; this list is by no means complete.

Although our paper is not directly related to these still open questions, we recall them as part of the motivation for further study of the null variety.

Let $\mathcal{M}(d)$ be a group of rigid motions of $\Rbb^d$, and $\Omega$ be a bounded simply connected domain with piecewise smooth boundary. The Pompeiu problem is to prove that the existence of a non-zero continuous function $f:\Rbb^d\to\Rbb$ such that
$\int_{\mathbf{m}(\Omega)} f(\xb)\,\dr \xb = 0$ for all $\mathbf{m}\in  \mathcal{M}(d)$ implies that $\Omega$ is a ball.

Schiffer's conjecture is that the existence of an eigenfunction $v$ (corresponding to a non-zero eigenvalue $\mu$) of a Neumann Laplacian on a (simply connected) domain $\Omega$ such that $v\equiv\const$ along the boundary $\partial\Omega$ (or, in other words, the existence of a non-constant solution $v$ to the over-determined problem $-\Delta v=\mu v$, $\partial v/\partial n|_{\partial\Omega}=0$, $v|_{\partial\Omega}=1$) implies that $\Omega$ is a ball.

It is known  that the positive answer to the Pompeiu problem is equivalent to Schiffer's conjecture. Moreover, a domain $\Omega$ would be a counterexample to both if there exists $r>0$ such that
$\NC_\Cbb(\Omega)$ contains the complex sphere $\{\xib\in\Cbb^d: \sum_{j=1}^d \xi_j^2=r^2\}$. One of the common tools in attacking the conjectures has been an asymptotic analysis of the null variety far from the origin in an attempt to prove that such counterexample cannot exist.

In many cases, the study of the null variety in the papers cited above has been restricted to the case of a \emph{convex} domain $\Omega$. Additionally, it is convenient to assume that $\Omega$ is \emph{balanced} (i.e., centrally symmetric with respect to the origin), and to deal instead with the \emph{real null variety}
\[
\NC(\Omega):=\NC_\Cbb(\Omega)\cap\Rbb^d=\{\xib\in\mathbb{R}^d:\chiF=0\}=
\{\xib\in\mathbb{R}^d:\int_\Omega \cos(\xib\cdot\xb)\,\dr\xb=0\}\,.
\]
We assume that $\Omega$ is convex and balanced  in most parts of this paper.

The purpose of this paper is to study the behaviour of the null variety \emph{near} the origin, and its relation with the classical spectral theory. Namely, we define the numbers
\[
\kappa_\Cbb(\Omega) := \dist(\NC_\Cbb(\Omega), \zerob)=\min\{|\xib|:\xib\in\NC_\Cbb(\Omega)\}
\]
and
\[
\kappa(\Omega) := \dist(\NC(\Omega), \zerob)=\min\{|\xib|:\xib\in\NC(\Omega)\}
\]
(if there are no real zeros, we set $\kappa(\Omega)=\infty$). Throughout most of the paper, we will be dealing with the real zeros of Fourier transform and the quantity $\kappa(\Omega)$,  so, unless specified otherwise, we always assume that the argument of the Fourier transform $\FC[\chi_{\Omega}]$ is real.

On the basis of some partial cases presented below in Section~\ref{sec:motivation}, we conjecture that, firstly, $\kappa(\Omega)$ is maximized, among all convex balanced domains of the same volume as $\Omega$, by a ball (see Conjecture  \ref{conj:kap_ball}), and, secondly, that for all convex balanced domains $\kappa(\Omega)$ is bounded above by the square root of the second Dirichlet eigenvalue of $\Omega$ (see Conjecture  \ref{conj:kap_lam}). Note that it is very easy to see that $\kappa(\Omega)$ is always (i.e. without the convexity and central symmetry conditions) bounded below by the square root of the second Neumann eigenvalue of $\Omega$, see Lemma \ref{Neumann}.

Unfortunately we are unable to prove Conjectures  \ref{conj:kap_ball} and \ref{conj:kap_lam} as stated. Even in the planar case $d=2$, when the geometry of convex domains is easier to deal with, we are only able to establish
some weaker versions of these conjectures, see Theorems \ref{thm:kap_ball} and \ref{thm:kap_lam}. However, even these weaker results shed some extra light on the links between $\kappa(\Omega)$ and Dirichlet and Neumann
eigenvalues, and in particular show some surprising links with Friedlander's inequalities between the eigenvalues of these two problems, see Remark \ref{ex:inequalities} and Remark \ref{rem:further_inequalities}. Additionally, we can also establish the validity of Conjectures \ref{conj:kap_ball} and \ref{conj:kap_lam} for small star-shaped  perturbations of a disk, see Theorem \ref{thm:asympt_est}.

We also indicate that our results and conjectures can not be extended to wider classes of domains, in particular when the convexity condition is dropped, see Theorems \ref{thm:counterexample},  \ref{Nazarov}, and Corollary \ref{cor:Nazarov}.

The rest of this paper is organized as follows. Section \ref{sect:statements} contains the statements of our  Conjectures and main Theorems. Some particular cases making the conjectures plausible are collated in Section \ref{sec:motivation}. Some preliminary estimates (which in particular imply the validity of Conjecture \ref{conj:kap_ball} for relatively ``long and thin'' planar convex balanced domains) are presented and proved in
Section \ref{sec:some_estimates}. Extra notation and facts  from convex geometry are in Section \ref{sec:geometric}. Section \ref{sec:main_proof} contains the proof of Theorem \ref{thm:kap_ball}; some auxiliary technical Lemmas used in the proofs are collected in a separate Section \ref{sec:lemmas}. The perturbation-type results are proved in Section~\ref{sec:perturbations}, and the counterexamples for non-convex domains are proved in in Section~\ref{sec:counterex}.

We finish this Section by introducing some additional notation used throughout the paper. We write
$\vol_d(\cdot)$ for a $d$-dimensional Lebesgue measure of a set. Given a unit vector $\eb\in S^{d-1}$, we write
$x_\eb=\xb\cdot\eb$ and $\xb'_\eb=\xb-x_\eb\eb$. We write a real vector $\xib\in\Rbb^d$ in spherical coordinates as  $\xib=(\rho,\omeb)$,  with $\rho=|\xib|$ and $\omeb=\xib/\rho\in S^{d-1}$.  $B_d(R)=\{\xb\in\Rbb^d:|\xb|<R\}$ denotes a ball of radius $R$ centred at $\zerob$, and a shorthand for a unit ball will be $B_d=B_d(1)$.
$\Omega^*$ stands for a  ball in $\Rbb^d$ centred at $\zerob$ and of the same volume as $\Omega$.

Additionally, for  a direction $\eb\in S^{d-1}$, we define
$\kappa_{j}(\eb)=\kappa_j(\eb; \Omega)$ as the $j$-th positive real $\rho$-zero of
$\widehat{\chi_\Omega}(\rho\eb)$ (counting multiplicities); note that
$\displaystyle\NC(\Omega)=\bigcup_{j=1}^\infty\NC_j(\Omega)$, where
\[
\NC_j(\Omega):= \bigcup_{\eb\in S^{d-1}}\kappa_j(\eb; \Omega)\eb\,.
\]

Finally, $J_a(r)$ are the usual Bessel functions of order $a$, and $j_{a,k}$ are their positive zeros numbered in increasing order. The eigenvalues of the Dirichlet Laplacian on $\Omega$ are denoted by $\lambda_k(\Omega)$, $k=1,\dots$, and of the Neumann Laplacian by  $\mu_j(\Omega)$, $j=1,\dots$ ($\mu_1=0$).

\subsection*{Acknowledgments} We would like to express our gratitude to I. Polterovich for stimulating discussions, to F. Nazarov and N. Sidorova for helping with the proof
of Theorem \ref{Nazarov}, and to N.~Filonov for letting us use his Lemma \ref{Neumann2}. We are also grateful to the referee for useful suggestions.


\section{Conjectures and statements}\label{sect:statements}

\begin{defn}\label{defn:bal}
$\Omega$ is \emph{balanced} if it is invariant with respect to the mapping
$\xb\mapsto -\xb$.
\end{defn}

\begin{conj}\label{conj:kap_ball}
If $\Omega$ is convex and balanced, then
\begin{equation}\label{eq:kap_ball_conj}
\kappa(\Omega)\le\kappa(\Omega^*)\,,
\end{equation}
with the equality iff $\Omega$ is a ball.
\end{conj}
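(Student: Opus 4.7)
The natural approach is to reduce the conjecture to a one-dimensional statement along a carefully chosen direction via slicing, and then exploit Brunn--Minkowski concavity to compare with the ball. Concretely, I would first use Fubini to write, for each $\eb\in S^{d-1}$,
\begin{equation*}
\widehat{\chi_\Omega}(\rho\eb)=2\int_0^{h(\eb)}A_\eb(t)\cos(\rho t)\,\dr t,\qquad A_\eb(t):=\vol_{d-1}\bigl(\Omega\cap\{x_\eb=t\}\bigr),
\end{equation*}
with $2h(\eb)$ the width of $\Omega$ in direction $\eb$ and $A_\eb$ even because $\Omega$ is balanced. Then $\kappa_1(\eb;\Omega)$ is the first positive zero of this cosine transform, and the conjecture reduces to exhibiting at least one direction $\eb_0$ with $\kappa_1(\eb_0;\Omega)\le j_{d/2,1}/R$, where $R=\bigl(\vol_d(\Omega)/\vol_d(B_d)\bigr)^{1/d}$ is the ball-equivalent radius.

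I would take $\eb_0$ to be a direction of maximum width; the isodiametric inequality then gives $h(\eb_0)\ge R$, with equality only for the ball. By Brunn's theorem, $A_{\eb_0}^{1/(d-1)}$ is concave on $[-h(\eb_0),h(\eb_0)]$ and vanishes at the endpoints, while $\int_{-h(\eb_0)}^{h(\eb_0)}A_{\eb_0}(t)\,\dr t=\vol_d(\Omega^*)$. The heart of the proof then becomes a 1D comparison lemma: \emph{among non-negative even functions $f$ on $[-h,h]$ with $f^{1/(d-1)}$ concave, $f(\pm h)=0$, $\int_{-h}^h f(t)\,\dr t=\vol_d(\Omega^*)$, and $h\ge R$, the first positive zero of $F(\rho)=\int_0^h f(t)\cos(\rho t)\,\dr t$ does not exceed $j_{d/2,1}/R$, with equality iff $h=R$ and $f$ is the ball-slice profile.} I would attempt to establish this via a continuous deformation within the admissible class, reducing monotonically to the extremal ball-slice configuration while tracking the first zero.

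The main obstacle is precisely this 1D comparison, which is presumably why the authors only state Conjecture~\ref{conj:kap_ball} as a conjecture. The tempting simpler inequality $\kappa_1(\eb_0;\Omega)\le j_{d/2,1}/h(\eb_0)$ already fails: for the planar rhombus $|x|+2|y|\le 1$ the slicing function $A_{\eb_0}(t)=1-|t|$ has first cosine-transform zero $2\pi>j_{1,1}=j_{d/2,1}/h(\eb_0)$, although the weaker target $2\pi<j_{1,1}\sqrt{\pi}=j_{d/2,1}/R$ \emph{does} hold. Thus the volume and support constraints must be coupled into a single sharp inequality rather than exploited separately, and the coupled inequality does not appear to follow from any standard rearrangement principle. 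An alternative route via continuous Steiner symmetrization from $\Omega$ to $\Omega^*$ stalls for the same reason: symmetrization in direction $\eb$ preserves $A_\eb$ exactly --- and hence $\kappa_1(\eb;\cdot)$ --- so one would have to control a subtle cross-directional effect on $\kappa_1$ in every other direction, which is where any serious attempt at a full proof would have to concentrate its effort.
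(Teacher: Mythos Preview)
Your assessment is correct: Conjecture~\ref{conj:kap_ball} is stated as a conjecture in the paper and is not proved there. Your identification of the 1D comparison lemma as the essential obstruction is on target, and your rhombus example correctly demonstrates that the naive support-only bound $\kappa_1(\eb_0;\Omega)\le j_{d/2,1}/h(\eb_0)$ is false, so volume and support must indeed be coupled.

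Where your outline differs from the paper's actual partial progress (Theorem~\ref{thm:kap_ball}, the planar case with constant $\tilde{C}=2j_{0,1}/j_{1,1}\approx 1.2552$) is in the reduction step. You fix a \emph{single} direction of maximum width and aim for a sharp 1D cosine-transform inequality there. The paper instead \emph{averages over all directions}: if $\int_\Omega\cos(x_\eb)\,\dr\xb>0$ for every $\eb\in S^1$, then integrating over $\eb$ gives $\int_\Omega J_0(|\xb|)\,\dr\xb>0$, so it suffices to show this last integral is non-positive. After an integration by parts this becomes $\int_0^\infty\alpha(r)J_1(r)\,\dr r\le 0$, where $\alpha(r)=\vol_2(\Omega\cap B_2(r))/\vol_2(\Omega)$ is the normalized radial distribution, concave on $[r_-,r_+]$ by convexity of $\Omega$. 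The directional slice profile $A_\eb$ is thus replaced by a single radial function $\alpha$, and the oscillatory kernel $\cos(\rho t)$ by the Bessel weight $J_1(r)$. The authors then sandwich $\alpha$ between explicit piecewise-linear comparison functions and evaluate the resulting integrals in closed form; this yields $\tilde{C}$ but not the sharp constant $1$. Your Steiner-symmetrization remark is well taken --- symmetrization in direction $\eb$ freezes $A_\eb$ exactly, and it is precisely the cross-directional information that the averaging approach captures. If you want to attack the sharp constant, the radial $J_0$-averaging route looks more promising than single-direction slicing, though closing the gap from $\tilde{C}$ to $1$ is the open problem.
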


\begin{conj}\label{conj:kap_lam}
If $\Omega$ is convex and balanced, then
\begin{equation}\label{eq:kap_lam_conj}
\kappa(\Omega)\le\sqrt{\lambda_2(\Omega)}\,,
\end{equation}
with the equality iff $\Omega$ is a ball.
\end{conj}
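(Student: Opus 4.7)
\emph{Proof plan.} Since $\widehat{\chi_\Omega}(\zerob)=\vol(\Omega)>0$ and $\widehat{\chi_\Omega}$ is continuous and real-valued (the latter by balancedness), in order to produce a point of $\NC(\Omega)$ inside the closed ball of radius $\sqrt{\lambda_2(\Omega)}$ it is enough to exhibit a direction $\omeb\in S^{d-1}$ along which $\widehat{\chi_\Omega}(\sqrt{\lambda_2(\Omega)}\,\omeb)\le 0$; in fact, the weaker assertion that the spherical mean of $\widehat{\chi_\Omega}$ over the sphere $|\xib|^2=\lambda_2(\Omega)$ is non-positive already closes the argument, with strict inequality for $\Omega\ne\Omega^*$ yielding the rigidity statement.

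The plan is to manufacture the needed sign via the second Dirichlet eigenfunction $u_2$, which may be chosen antisymmetric ($u_2(-\xb)=-u_2(\xb)$) in our convex balanced setting. Applying Green's identity to $u_2$ and the plane wave $\sin(\xib\cdot\xb)$, for which $-\Delta\sin(\xib\cdot\xb)=|\xib|^2\sin(\xib\cdot\xb)$, one obtains the Rellich-type identity
\[
(|\xib|^2-\lambda_2(\Omega))\int_\Omega u_2(\xb)\sin(\xib\cdot\xb)\,\dr\xb=-\int_{\partial\Omega}\sin(\xib\cdot\xb)\,\frac{\partial u_2}{\partial n}\,\dr\sigma,
\]
so that on the critical sphere $|\xib|^2=\lambda_2(\Omega)$ one obtains the orthogonality $\int_{\partial\Omega}\sin(\xib\cdot\xb)(\partial u_2/\partial n)\,\dr\sigma=0$ for \emph{every} $\xib$ in that sphere.

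The central step is to marry this one-parameter family of boundary orthogonalities with the divergence-theorem identity
\[
|\xib|^2\,\widehat{\chi_\Omega}(\xib)=-\ir\int_{\partial\Omega}(\xib\cdot n)\,\er^{\ir\xib\cdot\xb}\,\dr\sigma,
\]
and to integrate against a suitably chosen positive measure on the sphere $|\xib|^2=\lambda_2(\Omega)$, in such a way that on one side one extracts a manifestly non-negative quadratic form in $\partial u_2/\partial n$ weighted by $\xb\cdot n$ (the latter being positive on $\partial\Omega$ since $\Omega$ is convex and balanced, hence contains the origin), while on the other side one recovers a negative multiple of the spherical mean of $\widehat{\chi_\Omega}$ at radius $\sqrt{\lambda_2(\Omega)}$. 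The rigidity statement should then follow from the equality case of the Pohozaev identity $\int_{\partial\Omega}(\xb\cdot n)(\partial u_2/\partial n)^2\,\dr\sigma=2\lambda_2\|u_2\|^2$, combined with the characterisation of domains on which $\xb\cdot n$ and $|\partial u_2/\partial n|$ are simultaneously constant on $\partial\Omega$.

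The principal obstacle I anticipate is converting these \emph{linear-in-}$\er^{\ir\xib\cdot\xb}$ identities into a \emph{quadratic} inequality without losing sharpness: the Rellich identity only pins down the behaviour of the boundary Fourier data on the critical sphere, whereas one needs a pointwise or averaged sign condition on $\widehat{\chi_\Omega}$ itself. A secondary approach, in the spirit of the proof of Theorem \ref{thm:kap_lam}, would be to run a symmetrisation within the class of convex balanced bodies of fixed volume, simultaneously decreasing $\kappa(\Omega)$ and increasing $\sqrt{\lambda_2(\Omega)}$ until one reaches $\Omega^*$, where equality holds by the explicit Bessel computation $\kappa(\Omega^*)=j_{d/2,1}/R=\sqrt{\lambda_2(\Omega^*)}$; but monotonicity of $\kappa$ under Steiner- or Schwarz-type symmetrisations is surprisingly delicate, and it is presumably this obstacle that prevents the full conjecture from being settled here.
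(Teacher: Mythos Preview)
This statement is Conjecture~\ref{conj:kap_lam} in the paper, and the paper does \emph{not} prove it; it is presented as open, with only the weaker bound $\kappa(\Omega)\le 2\sqrt{\lambda_1(\Omega)}$ in two dimensions (Theorem~\ref{thm:kap_lam}) and an asymptotic confirmation near the disk (Theorem~\ref{thm:asympt_est}) established. So there is no ``paper's proof'' to compare to, and your proposal should be read as a strategy toward an open problem.

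Your spherical-averaging reduction is precisely the device the paper employs: Lemma~\ref{lem:aver} shows that non-positivity of $\int_\Omega J_0(\rho|\xb|)\,\dr\xb$ forces $\kappa(\Omega)\le\rho$. Taking $\rho=\sqrt{\lambda_2(\Omega)}$ and passing to the boundary via the divergence theorem (equivalently, your second identity), the required condition in $d=2$ becomes
\[
\int_{\partial\Omega}\frac{\xb\cdot n}{|\xb|}\,J_1\!\bigl(\sqrt{\lambda_2}\,|\xb|\bigr)\,\dr\sigma\le 0,
\]
which is sharp for a ball (both sides vanish since $\sqrt{\lambda_2(B_R)}\,R=j_{1,1}$). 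For a general convex balanced $\Omega$ this inequality is not at all evident; Sections~\ref{sec:main_proof}--\ref{sec:lemmas} are devoted to a closely related integral and, even with substantial effort exploiting the concavity of the area function $\alpha(r)$, yield only the non-sharp constant $\tilde C=2j_{0,1}/j_{1,1}$.

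The genuine gap is your ``central step''. The Rellich identity gives $\int_{\partial\Omega}\sin(\xib\cdot\xb)\,\partial_n u_2\,\dr\sigma=0$ on the critical sphere: an orthogonality of the boundary trace $\sin(\xib\cdot\xb)$ against the density $\partial_n u_2$. The divergence identity expresses $\widehat{\chi_\Omega}(\xib)$ as the pairing of the \emph{same} boundary trace against the \emph{different} density $\xib\cdot n$. Integrating the Rellich orthogonality against any positive measure on the sphere still gives zero, not a one-sided bound, and there is no mechanism to replace $\partial_n u_2$ by $\xib\cdot n$ (or by $\xb\cdot n$). The Pohozaev identity $\int_{\partial\Omega}(\xb\cdot n)(\partial_n u_2)^2\,\dr\sigma=2\lambda_2\|u_2\|^2$ is a single scalar relation quadratic in $\partial_n u_2$; it carries no directional information and does not collapse to $\widehat{\chi_\Omega}$. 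Your own caveat about ``converting these linear identities into a quadratic inequality without losing sharpness'' is exactly the obstruction, and it is not overcome here.

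A smaller point: the claim that $u_2$ can always be chosen antisymmetric on a convex balanced domain is plausible but not automatic. When $\lambda_2$ is simple one has $u_2(-\xb)=\pm u_2(\xb)$, but excluding the even case requires a separate nodal-geometry argument.
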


In the next section we consider several explicit examples for which we demonstrate the validity of these conjectures.

Although we believe these Conjectures to be true, we are unable to prove them without some
additional assumptions. We can however establish somewhat weaker forms in the two-dimensional case as stated in the next two theorems. Also, we can prove \eqref{eq:kap_ball_conj} subject to some additional conditions on $\Omega$, see Corollaries~\ref{cor:largeD} and \ref{cor:small_r}, and  Remark \ref{rem:long_lam}.

\begin{thm}\label{thm:kap_ball}
If $d=2$, and $\Omega$ is convex and balanced, then
\begin{equation}\label{eq:kap_ball_proved}
\kappa(\Omega)\le C\kappa(\Omega^*)\,,
\end{equation}
with
\begin{equation}\label{eq:C_js}
C=\tilde{C}:=\frac{2j_{0,1}}{j_{1,1}}\approx 1.2552\,.
\end{equation}
\end{thm}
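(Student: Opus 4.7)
Since $\Omega^*$ is the disk of radius $R:=\sqrt{\vol_2(\Omega)/\pi}$, its Fourier transform is radial with first zero $j_{1,1}/R$, so $\kappa(\Omega^*)=j_{1,1}/R$ and \eqref{eq:kap_ball_proved} reduces to producing, for every convex balanced $\Omega\subset\Rbb^2$, a direction $\eb\in S^1$ and $\rho\le 2j_{0,1}/R$ with $\widehat{\chi_\Omega}(\rho\eb)=0$. Letting $L_\eb(t):=\vol_1(\Omega\cap\{\xb\cdot\eb=t\})$ be the chord function and $2h_\eb$ the width of $\Omega$ in the direction $\eb$, Fubini gives
\[
\widehat{\chi_\Omega}(\rho\eb)=2\int_0^{h_\eb}L_\eb(t)\cos(\rho t)\,\dr t,
\]
a one-dimensional cosine transform of an even, non-negative, concave (by convexity of $\Omega$) function normalized by $\int L_\eb\,\dr t=\pi R^2$. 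The task becomes to locate a zero of this transform below $2j_{0,1}/R$ for some cleverly chosen direction.

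My plan is to split on the shape of $\Omega$. \emph{Long-and-thin case.} If some half-width $h_\eb$ is substantially larger than $R$, pick that $\eb$; the preliminary estimates of Section~\ref{sec:some_estimates} (advertised in the introduction as already confirming Conjecture~\ref{conj:kap_ball} in this regime) deliver a first zero of order $\pi/h_\eb$, well below $2j_{0,1}/R$. \emph{Round case.} If all half-widths are bounded by a constant multiple of $R$, then $\Omega$ is contained in a disk of radius comparable to $R$ and one should force a sign change via the angular average
\[
A(\rho):=\frac{1}{2\pi}\int_{S^1}\widehat{\chi_\Omega}(\rho\eb)\,\dr\sigma(\eb)=\int_\Omega J_0(\rho|\xb|)\,\dr\xb.
\]
Since $A(0)=\pi R^2>0$, any inequality $A(2j_{0,1}/R)\le 0$ yields a direction where $\widehat{\chi_\Omega}$ changes sign below $2j_{0,1}/R$. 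The disk itself satisfies $A(2j_{0,1}/R)=\pi R^2J_1(2j_{0,1})/j_{0,1}<0$, because $2j_{0,1}\approx 4.81\in(j_{1,1},j_{1,2})$ places $J_1$ in its negative arc; the general round case should then follow by a rearrangement-type estimate for the radial angular-fraction $f(r):=(2\pi)^{-1}\vol_1\{\theta:r\er^{\ir\theta}\in\Omega\}$, which is non-increasing with fixed first moment $\int rf(r)\,\dr r=R^2/2$, combined with convex-geometric constraints collected in Section~\ref{sec:geometric}.

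The hard part is the stitching of these two regimes, which is exactly what forces the conjectural constant $1$ to be relaxed to $\tilde C\approx 1.2552$: the angular-average inequality $A(2j_{0,1}/R)\le 0$ genuinely fails in the long-and-thin regime, where most of $\Omega$ sits in the oscillatory tail of $J_0(\rho|\xb|)$; conversely, the pointwise 1D estimate is wasteful when $\Omega$ is essentially round. I expect Section~\ref{sec:main_proof} to perform the stitching by selecting $\eb$ from a parameterized family (for example, related to the directions of minimal and maximal width), controlling $L_\eb$ through the concavity-plus-area constraint, and invoking the auxiliary Bessel-function inequalities of Section~\ref{sec:lemmas} to close the estimate with the specific constant $2j_{0,1}/j_{1,1}$ rather than a generic one.
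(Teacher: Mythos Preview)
Your high-level plan is essentially the paper's: normalize the volume, dispose of the long-and-thin regime via the diameter bound of Theorem~\ref{thm:diamest}, and in the remaining (``round'') regime force a sign change of the Fourier transform through the angular average $A(\rho)=\int_\Omega J_0(\rho|\xb|)\,\dr\xb$ at the scale $\rho=2j_{0,1}/R$ (the paper's Lemma~\ref{lem:aver}). Your diagnosis that the angular-average inequality genuinely fails for long-and-thin domains, and that the role of the first step is precisely to exclude that regime via the a~priori bounds $r_+<2\pi$, $r_->j_{0,1}^2/2$ (in the paper's normalization), is correct.

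Where your expectation diverges is the last paragraph. The paper never returns to a choice of direction $\eb$ or to the chord function $L_\eb$; after the long-and-thin case is excluded, \emph{all} remaining work stays at the level of the angular average, i.e.\ purely in terms of the radial distribution $\alpha(r)=\vol_2(\Omega\cap B_2(r))/\vol_2(\Omega)$ (the antiderivative of your $rf(r)$). An integration by parts turns the desired inequality into $\int_0^{j_{0,3}}\alpha(r)J_1(r)\,\dr r\le 0$, which the paper treats as an extremal problem over the class of functions that equal $r^2/\tau^2$ on $[0,r_-]$, are concave on $[r_-,r_+]$, and equal $1$ beyond $r_+$. The key device is an explicit piecewise-linear comparison function $\alpha_{\mathrm{approx}}$ (Lemma~\ref{lem:alp_ge_alp_approx}) that majorizes $\alpha$ where $J_1>0$ and minorizes it where $J_1<0$; the integral against $J_1$ is then evaluated in closed form (Struve functions, Lemma~\ref{lem:approx_int}) and shown numerically to be about $-0.0072$ once one also bounds the free parameter $\alpha(j_{1,1})$ from below (Lemma~\ref{lem:y11_est}). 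So the ``stitching'' is not a directional selection but a one-dimensional convex optimization for $\alpha$, and the specific constant $\tilde C=2j_{0,1}/j_{1,1}$ is exactly the scale at which this numerical computation closes with a tiny margin.
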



\begin{thm}\label{thm:kap_lam}
If $d=2$, and $\Omega$ is convex and balanced, then
\begin{equation}\label{eq:kap_lam_proved}
\kappa(\Omega)\le 2\sqrt{\lambda_1(\Omega)}\,.
\end{equation}
\end{thm}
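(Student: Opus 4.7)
The plan is to deduce Theorem~\ref{thm:kap_lam} directly from Theorem~\ref{thm:kap_ball} together with the classical Faber--Krahn inequality. Note that the numerical constant $\tilde{C}=2j_{0,1}/j_{1,1}$ appearing in \eqref{eq:C_js} is precisely engineered so that this composition produces the clean factor $2$ in \eqref{eq:kap_lam_proved}; no new analytic work on the null variety is required.

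First I would compute $\kappa(\Omega^*)$ explicitly. Since $\Omega$ lives in $\Rbb^2$ and $\Omega^*$ is the disk of radius $R$ satisfying $\pi R^2=\vol_2(\Omega)$, a standard calculation using the radial symmetry of the Fourier transform gives $\widehat{\chi_{\Omega^*}}(\xib)=2\pi R\, J_1(R|\xib|)/|\xib|$, so that
\[
\kappa(\Omega^*)=\frac{j_{1,1}}{R}\,,\qquad \lambda_1(\Omega^*)=\left(\frac{j_{0,1}}{R}\right)^2\,.
\]

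Next I would invoke Theorem~\ref{thm:kap_ball}, which gives $\kappa(\Omega)\le \tilde{C}\kappa(\Omega^*)$ with $\tilde{C}=2j_{0,1}/j_{1,1}$. Substituting the formula for $\kappa(\Omega^*)$, this yields
\[
\kappa(\Omega)\le \frac{2j_{0,1}}{j_{1,1}}\cdot\frac{j_{1,1}}{R}=\frac{2j_{0,1}}{R}\,.
\]
The final step is the Faber--Krahn inequality $\lambda_1(\Omega)\ge \lambda_1(\Omega^*)=(j_{0,1}/R)^2$, which is valid for any bounded domain of volume $\pi R^2$ (convexity and central symmetry are not even needed here). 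Taking square roots gives $j_{0,1}/R\le \sqrt{\lambda_1(\Omega)}$, and hence
\[
\kappa(\Omega)\le \frac{2j_{0,1}}{R}\le 2\sqrt{\lambda_1(\Omega)}\,,
\]
which is exactly \eqref{eq:kap_lam_proved}.

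There is no real obstacle to overcome in the proof itself: all the substance of the argument is absorbed into the (considerably harder) Theorem~\ref{thm:kap_ball}. The only thing to verify is that the arithmetic matches, i.e.\ that the specific value of $\tilde{C}$ was chosen so that $\tilde{C}\cdot j_{1,1}=2j_{0,1}$, which is immediate from the definition \eqref{eq:C_js}. It is worth remarking that exactly this chain of inequalities also explains why the sharper Conjecture~\ref{conj:kap_lam} (with $\lambda_2$ on the right-hand side) would follow from Conjecture~\ref{conj:kap_ball} via the same argument, using $\lambda_2(\Omega^*)=(j_{1,1}/R)^2$ in place of Faber--Krahn.
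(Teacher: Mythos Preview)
Your proof is correct and is essentially identical to the paper's own argument: the paper states explicitly (in the Remark following the theorem) that Theorem~\ref{thm:kap_lam} ``immediately follows from Theorem~\ref{thm:kap_ball} by the Faber--Krahn inequality \dots\ and rescaling properties of Lemma~\ref{lem:rescaling}.'' Your computation of $\kappa(\Omega^*)=j_{1,1}/R$ and the chain $\kappa(\Omega)\le\tilde C\,\kappa(\Omega^*)=2j_{0,1}/R\le 2\sqrt{\lambda_1(\Omega)}$ is exactly this.

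One caveat about your closing remark: the analogous reduction of Conjecture~\ref{conj:kap_lam} to Conjecture~\ref{conj:kap_ball} would require the inequality $\lambda_2(\Omega)\ge\lambda_2(\Omega^*)$, and there is no ``Faber--Krahn for $\lambda_2$'' of this form. The classical Krahn--Szeg\H{o} result gives two equal balls, not a single ball, as the minimizer of $\lambda_2$ at fixed volume, and even among convex planar domains the disk is not known (nor expected) to minimize $\lambda_2$. So while Conjecture~\ref{conj:kap_ball} would give $\kappa(\Omega)\le\sqrt{\lambda_2(\Omega^*)}$, the further step to $\sqrt{\lambda_2(\Omega)}$ does not follow by the same mechanism.
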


\begin{remark}
Note that Theorem \ref{thm:kap_lam} immediately follows from Theorem \ref{thm:kap_ball} by the Faber-Krahn inequality,
\[
\lambda_1(\Omega)\ge \lambda_1(\Omega^*)=\frac{\pi j_{0,1}^2}{\vol_2(\Omega)}\,,
\]
and rescaling properties of Lemma \ref{lem:rescaling}. Note also that \eqref{eq:kap_lam_proved} is clearly weaker than  \eqref{eq:kap_lam_conj} in the two-dimensional case, since, by the Payne-P\'{o}lya-Weinberger inequality \cite{PPW}, in two dimensions
\[
\lambda_2(\Omega)<3\lambda_1(\Omega)\,,
\]
or by the even stronger Ashbaugh-Benguria inequality \cite{AshBen},
\[
\lambda_2(\Omega)\le \left(\frac{j_{1,1}}{j_{0,1}}\right)^2 \lambda_1(\Omega)\approx 2.539\lambda_1(\Omega)\,.
\]
Finally, in the one-dimensional case, a convex balanced domain is an interval $(-a, a)=B_1(a)$ for some $a>0$, and
\[
\kappa(B_1(a))=\sqrt{\lambda_2(B_1(a))}=\frac{\pi}{a}\,,
\]
so that  \eqref{eq:kap_ball_conj} and \eqref{eq:kap_lam_conj} hold with equality.
\end{remark}

We can also establish the validity of  \eqref{eq:kap_ball_conj} and \eqref{eq:kap_lam_conj} for balanced star-shaped (but not necessarily convex) domains which are close to a disk. Namely, let
$F:\mathbb{S}^1\to\Rbb$ be a $C^2$ function on the unit circle; we additionally assume that
$F$ is periodic with period $\pi$:
\begin{equation}\label{eq:F_bal}
F(\theta+\pi)=F(\theta)\,.
\end{equation}
For $\eps\ge0$, define a domain in polar coordinates $(r, \theta)$ as
\begin{equation}\label{eq:Omega_F}
\Omega_{\epsilon F}:=\{(r, \theta)\,:\, 0\le r\le 1+\epsilon F(\theta)\}\,.
\end{equation}
Condition \ref{eq:F_bal} implies that $\Omega_{\eps F}$ is balanced.

Assume additionally that $F$ is \emph{area preserving}, that is
\begin{equation}\label{eq:intF0}
\int_0^{2\pi} F(\theta)\,\dr \theta = 0\,,
\end{equation}
and so
\[
\vol_2(\Omega_{\epsilon F})=\pi+O(\eps^2)\,.
\]
As we shall see from the re-scaling properties summarized in Lemma \ref{lem:rescaling},
condition \eqref{eq:intF0} can be assumed without any loss of generality.

The unperturbed domain (when $\eps=0$), $\Omega_{0F}$, is just a unit planar disk $B_2$.

We have

\begin{thm}\label{thm:asympt_est}
Let us fix a non-zero function $F$ as above satisfying  \eqref{eq:F_bal} and \eqref{eq:intF0}. Then the one-sided derivatives satisfy
\begin{equation}
\left.\frac{\dr\kappa(\Omega_{\eps F})}{\dr \eps}\right|_{\eps=0+}<0\,,
\end{equation}
and
\begin{equation}\label{eq:former2.10}
\left.\frac{\dr\kappa(\Omega_{\eps F})}{\dr \eps}\right|_{\eps=0+}< \left.\frac{\dr\sqrt{\lambda_2(\Omega_{\eps F})}}{\dr \eps}\right|_{\eps=0+}\,.
\end{equation}
Consequently, for sufficiently small $\eps>0$ (depending on $F$), Conjectures \ref{conj:kap_ball} and \ref{conj:kap_lam} with $\Omega=\Omega_{\eps F}$ hold.
\end{thm}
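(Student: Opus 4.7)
The plan is to compare the one-sided derivatives of $\kappa(\Omega_{\eps F})$ and $\sqrt{\lambda_2(\Omega_{\eps F})}$ at $\eps=0+$ by first-order perturbation theory, exploiting that both coincide at $j_{1,1}$ on the unit disk (since $\widehat{\chi_{B_2}}(\rho\omeb) = 2\pi J_1(\rho)/\rho$ first vanishes at $j_{1,1}=\sqrt{\lambda_2(B_2)}$). On the Fourier side, the standard first variation gives
\[
\widehat{\chi_{\Omega_{\eps F}}}(\rho\omeb) = \frac{2\pi J_1(\rho)}{\rho} + \eps \int_0^{2\pi} F(\theta)\er^{\ir\rho\cos(\theta-\phi)}\dr\theta + O(\eps^2),
\]
with $\omeb=(\cos\phi,\sin\phi)$; writing $F(\theta) = \sum_{k\ge1}(a_k\cos 2k\theta + b_k\sin 2k\theta)$ using \eqref{eq:F_bal}--\eqref{eq:intF0} and applying Jacobi--Anger reduces the correction to $2\pi\sum_{k\ge1}(-1)^kJ_{2k}(\rho)\tau_k(\phi)$, where $\tau_k(\phi) := a_k\cos 2k\phi + b_k\sin 2k\phi$. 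Solving for the first real zero near $\rho = j_{1,1}$ and using the identity $J_2(j_{1,1}) = -J_1'(j_{1,1})$ yields $\kappa_1(\omeb;\Omega_{\eps F}) = j_{1,1} + \eps\,s(\phi) + o(\eps)$ with
\[
s(\phi) = -j_{1,1}\tau_1(\phi) + \sum_{k\ge2}c_k\tau_k(\phi),\qquad c_k := \frac{j_{1,1}(-1)^{k+1}J_{2k}(j_{1,1})}{J_1'(j_{1,1})},
\]
so $\frac{\dr\kappa}{\dr\eps}\big|_{0+} = \min_\phi s(\phi)$. In parallel, $\lambda_2(B_2) = j_{1,1}^2$ is a double eigenvalue with eigenfunctions proportional to $J_1(j_{1,1}r)\cos\theta$ and $J_1(j_{1,1}r)\sin\theta$; Hadamard's formula in the degenerate setting reduces the $2{\times}2$ perturbation matrix $M_{ij} = -\int_0^{2\pi}(\partial_n u_i)(\partial_n u_j)F\dr\theta$ (via $\int_0^1 rJ_1(j_{1,1}r)^2\dr r = \tfrac12 J_1'(j_{1,1})^2$) to $-j_{1,1}^2\bigl(\begin{smallmatrix}a_1 & b_1\\ b_1 & -a_1\end{smallmatrix}\bigr)$, whose smaller eigenvalue gives $\frac{\dr\sqrt{\lambda_2}}{\dr\eps}\big|_{0+} = -j_{1,1}R/2$ with $R:=\sqrt{a_1^2+b_1^2}$.

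The main step is the strict bound $\min_\phi s(\phi) < -j_{1,1}R/2$, from which \eqref{eq:former2.10} and (when $R>0$) the first displayed inequality follow; the case $R=0$ is immediate because $s$ is then nontrivial and mean-zero, forcing $\min s<0$. The key observation is the symmetry $\tau_k(\phi+\pi/2) = (-1)^k\tau_k(\phi)$: splitting $s = s_\mathrm{odd}+s_\mathrm{even}$ by the parity of $k$ (mode~1 sitting in $s_\mathrm{odd}$) yields $s_\mathrm{odd}(\phi+\pi/2) = -s_\mathrm{odd}(\phi)$ and $s_\mathrm{even}(\phi+\pi/2) = s_\mathrm{even}(\phi)$, so $\min(s(\phi),s(\phi+\pi/2)) = s_\mathrm{even}(\phi) - |s_\mathrm{odd}(\phi)|$ and in particular $\min_\phi s(\phi) \le s_\mathrm{even}(\phi) - |s_\mathrm{odd}(\phi)|$ pointwise. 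Integrating over $\phi\in[0,\pi]$ and using $\int_0^\pi s_\mathrm{even}\dr\phi = 0$ gives $\pi\min_\phi s \le -\|s_\mathrm{odd}\|_{L^1[0,\pi]}$. Pairing $s_\mathrm{odd}$ against $\cos(2\phi-\alpha)$, $\alpha := \arg(a_1+\ir b_1)$, orthogonality kills all higher odd harmonics and leaves $\int_0^\pi s_\mathrm{odd}\cos(2\phi-\alpha)\dr\phi = -\pi j_{1,1}R/2$; H\"older's inequality $|\int fg| \le \|f\|_{L^1}\|g\|_{L^\infty}$, strict here because $|\cos(2\phi-\alpha)|<1$ off a null set while $s_\mathrm{odd}$ is continuous and nontrivial for $R>0$, then forces $\|s_\mathrm{odd}\|_{L^1[0,\pi]} > \pi j_{1,1}R/2$ and completes the argument. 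The two conjectures for $\Omega_{\eps F}$ at small $\eps>0$ finally follow by integrating the derivative inequalities, using $\kappa(\Omega^*_{\eps F}) = j_{1,1}+O(\eps^2)$ since $\vol_2(\Omega_{\eps F}) = \pi+O(\eps^2)$.

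The main obstacle is precisely this comparison: the naive choice $\phi_*=\alpha/2$ (where $-j_{1,1}\tau_1$ attains its minimum) gives only $s(\phi_*) = -j_{1,1}R + s_\mathrm{rest}(\phi_*)$ with no \emph{a priori} control on $s_\mathrm{rest}(\phi_*)$, so pointwise estimates cannot work. The symmetry under $\phi\mapsto\phi+\pi/2$ is what converts this uncontrolled pointwise discrepancy into an integral inequality, and H\"older duality against $\cos(2\phi-\alpha)$ then extracts the required lower bound on $\|s_\mathrm{odd}\|_{L^1}$.
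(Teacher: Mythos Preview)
Your proof is correct. Both you and the paper compute the two derivatives by perturbation theory and arrive at equivalent expressions; the difference is in how the comparison inequality $\min_\phi s(\phi) < -j_{1,1}R/2$ is established.

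The paper first rotates $F$ so that $b_1=0$ and $a_1\ge 0$ (a short separate lemma), and then uses the single weighted average
\[
\min_\phi s(\phi)\le \frac{2}{\pi}\int_0^\pi s(\phi)\cos^2\phi\,\dr\phi
= \frac{1}{\pi}\int_0^\pi s(\phi)\cos 2\phi\,\dr\phi = -\tfrac{j_{1,1}R}{2},
\]
the last equality by orthogonality (only the $k=1$ mode survives). Strictness is then treated in a separate discussion of the equality case.

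Your route avoids the preliminary rotation: the $\pi/2$-shift symmetry $\tau_k(\phi+\pi/2)=(-1)^k\tau_k(\phi)$ cleanly isolates the mode-$1$ contribution inside $s_{\mathrm{odd}}$, and the $L^1$--$L^\infty$ duality against $\cos(2\phi-\alpha)$ replaces the nonnegative weight $\cos^2\phi$. The two arguments are morally the same pairing against the second harmonic, but yours packages strictness inside the H\"older step rather than as a separate equality analysis, and does not require choosing a privileged orientation of $F$. Conversely, the paper's version is a one-line weighted average once the rotation is made.

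One small point you are using implicitly: in the $R=0$ case, concluding that $s$ is nontrivial requires $J_{2k}(j_{1,1})\neq 0$ for all $k\ge 1$, i.e.\ that $J_1$ and $J_{2k}$ share no positive zeros. This is Bourget's hypothesis, proved by Siegel, and the paper relies on it in the same way when analyzing the equality case.
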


On the other hand, there exist arbitrarily small star-shaped non-convex perturbations of the disk for which at least \eqref{eq:kap_ball_conj} does not hold. Namely, we have
\begin{thm} \label{thm:counterexample}
For each positive $\tilde\delta$, there exists a balanced star-shaped domain $\Omega$ with $\vol_{2}(\Omega)=\pi$ and such that $B(0,1-\tilde\delta)\subset\Omega\subset B(0,1+\tilde\delta)$, for which $\kappa(\Omega)>j_{1,1}$ .
\end{thm}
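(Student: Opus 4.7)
The plan is to construct $\Omega$ as a rapidly oscillating star-shaped perturbation of the unit disk, designed so that a rotational symmetry suppresses the first-order-in-$\tilde\delta$ contribution to $\widehat{\chi_\Omega}$ at $|\xib|=j_{1,1}$, leaving only a small but strictly positive third-order mean that pushes all zeros of $\widehat{\chi_\Omega}$ outward. For an even integer $N$ to be chosen, let $\tilde\phi$ be a $(\pi/N)$-periodic step function on $[0,2\pi)$ taking the value $a=\tilde\delta$ on an arc of length $\pi b/[N(a+b)]$ and $-b=-\tilde\delta/2$ on the complementary arc of length $\pi a/[N(a+b)]$ within each period, so $\int\tilde\phi\,\dr\theta=0$ and $\tilde\phi(\theta+\pi)=\tilde\phi(\theta)$. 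Define $R(\theta):=1+c_0+\tilde\phi(\theta)$ with $c_0:=\sqrt{1-ab}-1$, the unique choice making $\int R^2\,\dr\theta=2\pi$; a direct check gives $|R-1|\le\tilde\delta$ for $\tilde\delta<2$. The resulting domain $\Omega:=\{(r,\theta):r\le R(\theta)\}$ is then balanced, star-shaped, has $\vol_2(\Omega)=\pi$, and satisfies $B(0,1-\tilde\delta)\subset\Omega\subset B(0,1+\tilde\delta)$.

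Rotational invariance of $\Omega$ by $\pi/N$ forces the $\omega$-Fourier expansion of $\widehat{\chi_\Omega}(\rho\cos\omega,\rho\sin\omega)$ to be supported on frequencies $2kN$, $k\in\mathbb{Z}$; write it as $\bar\alpha(\rho)+\sum_{k\ne 0}\alpha_{2kN}(\rho)e^{2ikN\omega}$. Integrating the plane-wave expansion over $\omega$ gives $\bar\alpha(\rho)=\int_\Omega J_0(\rho|\xb|)\,\dr\xb$, and in polar coordinates $\bar\alpha(j_{1,1})=j_{1,1}^{-1}\int_0^{2\pi}G(R(\theta))\,\dr\theta$ with $G(r):=rJ_1(j_{1,1}r)$. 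Using the Bessel ODE one computes $G(1)=0$, $G'(1)=G''(1)=j_{1,1}J_0(j_{1,1})$, and $G'''(1)=-j_{1,1}^3 J_0(j_{1,1})$, so after expanding $G$ in $\phi:=R-1$ the volume constraint $\int(\phi+\phi^2/2)\,\dr\theta=0$ cancels the first two derivatives and leaves
\[
\bar\alpha(j_{1,1})=-\frac{j_{1,1}^2 J_0(j_{1,1})}{6}\int_0^{2\pi}\phi^3\,\dr\theta+O(\tilde\delta^4).
\]
An elementary computation gives $\int\phi^3\,\dr\theta=2\pi ab(a-b)+O(\tilde\delta^4)$, and since $a>b$ while $J_0(j_{1,1})<0$, we conclude $\bar\alpha(j_{1,1})\ge c_1\tilde\delta^3$ for some $c_1>0$ independent of $N$.

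For the oscillating coefficients $\alpha_{2kN}(j_{1,1})=i^{2kN}\int_\Omega e^{-2ikN\arg\xb}J_{2kN}(j_{1,1}|\xb|)\,\dr\xb$, the classical bound $|J_n(x)|\le(x/2)^n/n!$ valid for $n\ge x$ yields $|\alpha_{2kN}(j_{1,1})|\le\pi\bigl(j_{1,1}(1+\tilde\delta)/2\bigr)^{2kN}/(2kN)!$, and summation over $k\ne 0$ gives a bound $c_2 j_{1,1}^{2N}/(2N)!$ which decays super-exponentially in $N$. For any fixed $\tilde\delta>0$ we pick $N$ so large that this bound is less than $c_1\tilde\delta^3/2$; then $\widehat{\chi_\Omega}(j_{1,1}\omeb)\ge c_1\tilde\delta^3/2>0$ uniformly in $\omeb$. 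Combined with the pointwise bound $|\widehat{\chi_\Omega}(\rho\omeb)-2\pi J_1(\rho)/\rho|\le\vol_2(\Omega\triangle B_2)=O(\tilde\delta)$ on the interior of $[0,j_{1,1}]$ and a short Taylor argument near $\rho=j_{1,1}$ exploiting $\partial_\rho[2\pi J_1(\rho)/\rho]|_{j_{1,1}}=2\pi J_0(j_{1,1})/j_{1,1}<0$, this forces $\widehat{\chi_\Omega}(\xib)>0$ on the whole closed ball $|\xib|\le j_{1,1}$, whence $\kappa(\Omega)>j_{1,1}$. The main difficulty---and the point the construction is tailored to defeat---is that to linear order in any star-shaped perturbation of $B_2$ the angular mean of $\widehat{\chi_\Omega}(j_{1,1}\omeb)$ vanishes identically (consistent with the negativity in Theorem~\ref{thm:asympt_est}), so generically $\widehat{\chi_\Omega}$ is negative in some direction and $\kappa$ drops below $j_{1,1}$; rapid oscillation at wavelength $\pi/N$ sends the troublesome linear contribution into harmonics of order $\ge 2N$, where Bessel decay makes it negligible against the cubic-in-$\tilde\delta$ mean $\bar\alpha$, which we have arranged to be positive.
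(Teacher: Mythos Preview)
Your argument is correct and shares the paper's overall architecture: endow $\Omega$ with high rotational symmetry so that $\widehat{\chi_\Omega}(\rho\omeb)$ is, up to negligible high angular modes, its $\omega$-average $\bar\alpha(\rho)=\int_\Omega J_0(\rho|\xb|)\,\dr\xb$, and then verify $\bar\alpha>0$ on $[0,j_{1,1}]$. The implementations diverge in two places. First, for positivity of $\bar\alpha(j_{1,1})$ the paper introduces a \emph{second} small parameter $\delta\ll\tilde\delta$ (removing half the mass on a thin annulus $[1-\delta,1]$ and redistributing it with small constant density on $[1,1+\tilde\delta]$) and uses only that $j_{1,1}$ is a strict local minimum of $J_0$; your Taylor expansion instead exploits the volume constraint to cancel orders one and two and then the asymmetry $a>b$ to force a positive cubic term. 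Second, for the extension to $\rho<j_{1,1}$ the paper observes the clean monotonicity $\bar\alpha'(\gamma)=-\int_\Omega |\xb|\,J_1(\gamma|\xb|)\,\dr\xb<0$ on $[0,j_{1,1}]$, which you could adopt to replace your near/far splitting entirely; conversely, your explicit bound $|J_n(x)|\le (x/2)^n/n!$ on the nonzero angular modes is a quantitative substitute for the paper's soft uniform-convergence argument as $n\to\infty$. One small point to add: your Taylor estimates need $\tilde\delta$ below an absolute threshold, so you should state the trivial reduction that, given an arbitrary $\tilde\delta>0$, one may work with any smaller value.
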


Continuing formulating negative results, we have the following

\begin{thm}\label{Nazarov}
There is no $C$ such that \eqref{eq:kap_ball_proved} holds uniformly for all (not necessarily connected) balanced one-dimensional domains $\Omega$.
\end{thm}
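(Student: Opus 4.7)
The plan is to reduce the statement, by the rescaling invariance of $\kappa(\Omega)/\kappa(\Omega^*)$ (Lemma \ref{lem:rescaling}), to constructing for each $M>0$ a balanced $\Omega\subset\Rbb$ of fixed volume $\vol_1(\Omega)=2$ (so that $\Omega^*=(-1,1)$ and $\kappa(\Omega^*)=\pi$) with $\kappa(\Omega)>M$. The underlying idea is to approximate $\chi_\Omega$ by an even, smooth density whose Fourier transform is strictly positive throughout $\Rbb$.

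Concretely, I would fix $\sigma>0$ with $\sigma\sqrt{2\pi}=2$ and take the Gaussian $g(x):=\er^{-x^2/(2\sigma^2)}$; then $0<g\le 1$, $g$ is even, $\int g=2$, and $\hat g(\xi)=\sqrt{2\pi}\,\sigma\,\er^{-\sigma^2\xi^2/2}>0$ is positive and decreasing on $[0,\infty)$. Given $M$, truncate $g$ to $g_R:=g\,\chi_{[-R,R]}$ with $R$ large enough that $|\hat g-\hat g_R|<\tfrac14\hat g(M)$ uniformly in $\xi$. Next, partition $[-R,R]$ symmetrically into $2N$ bins $I_k$ of width $h=R/N$ with midpoints $m_k$ (so $m_{-k+1}=-m_k$), and inside each $I_k$ place a subinterval $J_k$ centred at $m_k$ of length $h\,g(m_k)\le h$. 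By evenness of $g$ the set $\Omega:=\bigcup_k J_k$ is balanced, and a direct computation yields
\[
\widehat{\chi_\Omega}(\xi)=\sum_k\frac{2\sin(h\,g(m_k)\xi/2)}{\xi}\cos(m_k\xi).
\]

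Using $\sin(y)=y+O(y^3)$ with $y=h\,g(m_k)\xi/2$ rewrites this as a midpoint Riemann sum for $\hat g_R(\xi)=\int_{-R}^{R}g(x)\cos(\xi x)\,\dr x$ plus an error that is uniform in $\xi\in[0,M]$ and tends to $0$ as $h\to 0$. Choosing $h$ small enough that this discretisation error is also below $\tfrac14\hat g(M)$ on $[0,M]$, and combining with the truncation bound and the monotonicity of $\hat g$, gives
\[
\widehat{\chi_\Omega}(\xi)\ge\hat g(\xi)-\tfrac12\hat g(M)\ge\tfrac12\hat g(M)>0\qquad\text{for all }\xi\in(0,M],
\]
so that $\kappa(\Omega)>M$. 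Meanwhile $\vol_1(\Omega)=\sum_k h\,g(m_k)$ is itself a Riemann sum for $\int_{-R}^{R}g$ and hence arbitrarily close to $2$; any $O(h)$ discrepancy can be absorbed by a slight dilation, which alters $\kappa(\Omega)$ and $\kappa(\Omega^*)$ by the same factor and therefore does not affect the divergence of the ratio.

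The subtlety I expect to be the main point of care is that $\hat g(M)=\sqrt{2\pi}\,\sigma\er^{-\sigma^2M^2/2}$ is exponentially small in $M$, so the tolerance on the discretisation error forces $h$ to be exponentially small and, correspondingly, $\Omega$ to have exponentially many connected components. This is not a genuine obstruction — indeed it is precisely why the connectedness hypothesis is dropped in the statement — but it does emphasise that the counterexample is intrinsically very disconnected, consistent with the fact that a connected balanced one-dimensional domain is just an interval and the analogous bound is trivial there.
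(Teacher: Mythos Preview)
Your argument is correct and takes a genuinely different route from the paper. The paper's proof (attributed to F.~Nazarov) builds the counterexample as a union of \emph{equal-length} intervals centred at points $\pm w_1,\dots,\pm w_n$, and the whole difficulty is pushed into choosing the $w_j$ so that $\sum_j\cos(w_j\xi)>0$ on a long interval; this is achieved probabilistically, by selecting a random subset of $\{1,\dots,n\}$ with weights $(1-k/n)^2$ and invoking Poisson summation plus a large-deviation bound. You instead fix the centres on a regular grid and vary the \emph{lengths} of the intervals to encode a Gaussian profile, so that $\widehat{\chi_\Omega}$ is, up to controllable $O(h^2)$ errors, a midpoint Riemann sum for the Gaussian's (everywhere positive) Fourier transform. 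Your construction is entirely deterministic and more elementary---no Poisson summation, no probabilistic existence argument---at the cost of producing exponentially many components in $M$ (since the tolerance $\tfrac14\hat g(M)$ is Gaussian-small), whereas the paper's construction keeps the number of intervals comparable to the parameter $n$. Both exploit the same underlying principle: take a nonnegative template with strictly positive Fourier transform (Gaussian for you, $(1-|t|)_+^2$ in the paper) and approximate it by a characteristic function.
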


From this, we immediately have

\begin{cor}\label{cor:Nazarov}
There is no $C$ such that \eqref{eq:kap_ball_proved} holds uniformly for all balanced  connected two-dimensional domains $\Omega$.
\end{cor}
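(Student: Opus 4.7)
The plan is to derive the corollary from Theorem~\ref{Nazarov} via a product-with-bridges construction. Given any putative constant $C$, apply Theorem~\ref{Nazarov} together with the scaling Lemma~\ref{lem:rescaling} to produce a balanced one-dimensional domain $\Omega_1\subset\Rbb$ with $\vol_1(\Omega_1)=1$ and $\kappa(\Omega_1)=:M$ as large as desired; such $\Omega_1$ is a finite disjoint union of symmetric pairs of open intervals. Its one-variable Fourier transform is real (by balancedness), continuous on $\Rbb$, equal to $1$ at the origin, and nonvanishing on $(-M,M)$, so compactness gives $m>0$ with $\widehat{\chi_{\Omega_1}}(\xi_1)\ge m$ for $|\xi_1|\le M/2$.

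Set $h:=\pi/(2M)$ and consider the balanced planar product $\Omega_0:=\Omega_1\times(-h,h)$. Its Fourier transform factorises,
\[
\widehat{\chi_{\Omega_0}}(\xi_1,\xi_2)=\widehat{\chi_{\Omega_1}}(\xi_1)\cdot\frac{2\sin(h\xi_2)}{\xi_2},
\]
and combining the bound for $\widehat{\chi_{\Omega_1}}$ with Jordan's inequality $|\sin t|\ge(2/\pi)|t|$ on $|t|\le\pi/2$ (applicable because $|\xi_2|\le M/2$ forces $|h\xi_2|\le\pi/4$) yields the uniform estimate $|\widehat{\chi_{\Omega_0}}(\xib)|\ge \delta:=4hm/\pi>0$ on the disk $\{|\xib|\le M/2\}$.

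Next, connect the finitely many rectangular components of $\Omega_0$ by thin horizontal strips of height $\eta$, arranged $\zerob$-symmetrically in pairs so that their union $B$ is $\zerob$-symmetric and joins all components; taking $\eta$ small, arrange $\vol_2(B)<\delta/2$. Then $\Omega:=\Omega_0\cup B$ is balanced and connected, $|\widehat{\chi_B}(\xib)|\le\vol_2(B)<\delta/2$ everywhere on $\Rbb^2$, and therefore $|\widehat{\chi_{\Omega}}(\xib)|>\delta/2$ for $|\xib|\le M/2$, giving $\kappa(\Omega)>M/2$. On the other hand $\vol_2(\Omega)\le 2h+\delta/2=O(1/M)$, so $\kappa(\Omega^*)=j_{1,1}\sqrt{\pi/\vol_2(\Omega)}=O(\sqrt{M})$, whence
\[
\frac{\kappa(\Omega)}{\kappa(\Omega^*)}\gtrsim\sqrt{M},
\]
which is unbounded as $M\to\infty$, contradicting the existence of a universal $C$.

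The only substantive input is Theorem~\ref{Nazarov}; everything else is elementary compactness together with bookkeeping to make the connecting corridors simultaneously thin, $\zerob$-symmetric, and topologically joining, which I do not expect to present a real obstacle.
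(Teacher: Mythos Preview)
Your argument is correct and follows essentially the same strategy as the paper: lift the one-dimensional example from Theorem~\ref{Nazarov} to a planar product domain and then connect the components by thin corridors whose contribution to the Fourier transform is bounded by their area. The only difference is the choice of product: the paper takes the \emph{square} product $A_n:=I_n\times I_n$, for which $\widehat{\chi_{A_n}}$ factorises and one reads off directly $\kappa(A_n)=\kappa(I_n)$ and $\vol_2(A_n)=(\vol_1(I_n))^2$, hence $\kappa(A_n)\sqrt{\vol_2(A_n)}=\kappa(I_n)\vol_1(I_n)\to\infty$; you instead take the asymmetric product $\Omega_1\times(-h,h)$ with $h\sim 1/M$, which forces you to produce an explicit positive lower bound on $\widehat{\chi_{\Omega_0}}$ on a disk via compactness and Jordan's inequality. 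Both routes then need the same corridor argument, which the paper leaves as a one-line remark but which you spell out carefully (the key point being $|\widehat{\chi_B}|\le\vol_2(B)$); in that sense your write-up is more complete than the paper's. The paper's symmetric product is marginally cleaner because the identity $\kappa(A_n)=\kappa(I_n)$ is immediate, whereas you only get $\kappa(\Omega)>M/2$; but the end result is the same.
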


Theorem \ref{thm:counterexample} and Corollary \ref{cor:Nazarov} show that convexity plays a crucial role in Theorem \ref{thm:kap_ball}
and Conjecture \ref{conj:kap_ball}


\section{Motivation and elementary domains}\label{sec:motivation}

We start with two trivial results, which are immediate by the change of variables, and which in particular show that  our conjectures are scale invariant. Let $\RC_\alpha$ denotes a mapping $(x_1,x_2,\dots,x_d)\mapsto (\alpha x_1,x_2,\dots,x_d)$, $\alpha>0$.

\begin{lem}\label{lem:rescaling1d}
For any $\Omega\subset\Rbb^d$, 
\[
\NC(\RC_\alpha\Omega)=\RC_{1/\alpha}\NC(\Omega)\,.
\]
\end{lem}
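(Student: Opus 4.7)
The plan is a direct change of variables in the defining integral for $\widehat{\chi_\Omega}$, after which the statement can simply be read off. No obstacles are anticipated; the lemma is an elementary scaling identity of the Fourier transform that the authors have already labeled ``trivial.''

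First I would write out
\[
\widehat{\chi_{\RC_\alpha\Omega}}(\xib)=\int_{\RC_\alpha\Omega}\er^{\ir\xib\cdot\xb}\,\dr\xb
\]
and perform the substitution $\xb=\RC_\alpha\yb$. The change of variables has Jacobian $\alpha$, and since $\RC_\alpha$ acts only on the first coordinate we have $\xib\cdot(\RC_\alpha\yb)=\alpha\xi_1 y_1+\xi_2 y_2+\cdots+\xi_d y_d=(\RC_\alpha\xib)\cdot\yb$. This yields the key identity
\[
\widehat{\chi_{\RC_\alpha\Omega}}(\xib)=\alpha\,\widehat{\chi_\Omega}(\RC_\alpha\xib).
\]

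Since $\alpha>0$, the prefactor is irrelevant for the zero set, and the identity shows that $\xib\in\NC(\RC_\alpha\Omega)$ if and only if $\RC_\alpha\xib\in\NC(\Omega)$, i.e.\ $\xib\in\RC_\alpha^{-1}\NC(\Omega)=\RC_{1/\alpha}\NC(\Omega)$. This gives the claimed equality $\NC(\RC_\alpha\Omega)=\RC_{1/\alpha}\NC(\Omega)$. The same argument works verbatim over $\Cbb^d$ and so the analogous identity for $\NC_\Cbb$ comes for free, which is useful for subsequent rescaling lemmas.
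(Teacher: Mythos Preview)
Your argument is correct and is exactly the change-of-variables computation the paper has in mind; the authors state the lemma without proof, calling it ``immediate by the change of variables,'' and your write-up simply spells this out.
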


\begin{lem}\label{lem:rescaling}
Let $\Omega'$ be the image of $\Omega\subset\Rbb^d$ under a homothety with coefficient $\alpha>0$. Then
\[
\kappa(\Omega')=\frac{1}{\alpha}\kappa(\Omega)\,,\qquad
\lambda_j(\Omega')=\frac{1}{\alpha^2}\lambda_j(\Omega)\,.
\]
\end{lem}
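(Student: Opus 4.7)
The plan is to prove each of the two scaling identities by a direct change of variables, since both are essentially the standard dilation covariances of the Fourier transform and the Laplacian.

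For the statement about $\kappa$, I would start from the integral definition and substitute $\xb=\alpha\yb$ in
\[
\widehat{\chi_{\Omega'}}(\xib)=\int_{\Omega'}\er^{\ir\xib\cdot\xb}\,\dr\xb\,,
\]
using that $\xb\in\Omega'=\alpha\Omega$ iff $\yb=\xb/\alpha\in\Omega$, and that the Jacobian of the substitution is $\alpha^d$. This immediately gives
\[
\widehat{\chi_{\Omega'}}(\xib)=\alpha^{d}\,\widehat{\chi_{\Omega}}(\alpha\xib)\,.
\]
Since $\alpha^d\ne 0$, the real null varieties are related by $\NC(\Omega')=\alpha^{-1}\NC(\Omega)$, and taking the distance to the origin yields $\kappa(\Omega')=\alpha^{-1}\kappa(\Omega)$.

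For the Dirichlet eigenvalue statement, I would use the same substitution on the eigenvalue problem: if $-\Delta u=\lambda u$ on $\Omega$ with $u|_{\partial\Omega}=0$, then $v(\xb):=u(\xb/\alpha)$ is in $H^1_0(\Omega')$ and satisfies $-\Delta v = \alpha^{-2}\lambda\,v$ on $\Omega'$. This map $u\mapsto v$ is a linear bijection between the Dirichlet eigenspaces of $\Omega$ and $\Omega'$ (preserving dimensions of eigenspaces), so by the min-max characterisation the $j$-th eigenvalues scale as $\lambda_j(\Omega')=\alpha^{-2}\lambda_j(\Omega)$.

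There is no real obstacle here; both identities are one-line changes of variables, and the only thing to be careful about is bookkeeping of the Jacobian factor (which cancels in the zero set) and the fact that the bijection of eigenspaces preserves multiplicities so that indexing by $j$ is consistent. The lemma is stated explicitly because it justifies the earlier claim that Conjectures \ref{conj:kap_ball} and \ref{conj:kap_lam} are scale-invariant, and it is invoked at several later points (for instance in the reduction that allows the normalisation \eqref{eq:intF0}).
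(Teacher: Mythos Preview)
Your argument is correct and is exactly the change-of-variables computation the paper has in mind; the paper does not actually write out a proof but simply states that both identities are ``immediate by the change of variables''. There is nothing to add.
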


The following result illustrates that there exists a relation between the null variety and eigenvalues of the
Neumann Laplacian, which makes Conjecture \ref{conj:kap_lam} even more intriguing.

\begin{lem}\label{Neumann}
For any   $\Omega\subset\Rbb^d$,
\begin{equation}
\kappa(\Omega)\ge \kappa_\Cbb(\Omega)\ge\sqrt{\mu_2(\Omega)}\,.
\end{equation}
\end{lem}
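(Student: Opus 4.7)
The first inequality $\kappa(\Omega)\ge \kappa_\Cbb(\Omega)$ is immediate from the set inclusion $\NC(\Omega)\subset\NC_\Cbb(\Omega)$: minimising $|\xib|$ over a smaller set can only give a larger value. So the content of the lemma is the inequality $\kappa_\Cbb(\Omega)\ge\sqrt{\mu_2(\Omega)}$.

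The plan is to use the standard variational (min-max) characterisation of $\mu_2$,
\[
\mu_2(\Omega)=\min\left\{\frac{\int_\Omega|\nabla v|^2\,\dr\xb}{\int_\Omega|v|^2\,\dr\xb}\ :\ v\in H^1(\Omega),\ v\not\equiv 0,\ \int_\Omega v\,\dr\xb=0\right\}\,,
\]
and to exhibit, for every $\xib\in\NC_\Cbb(\Omega)$, an explicit admissible trial function whose Rayleigh quotient equals exactly $|\xib|^2$. The natural choice is the plane wave $u(\xb)=\er^{\ir\xib\cdot\xb}$. Since $\widehat{\chi_\Omega}(\xib)=\int_\Omega u\,\dr\xb=0$, the mean-zero constraint is satisfied, and $u$ is smooth so it lies in $H^1(\Omega)$.

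The key computation is that, writing $\xib=\boldsymbol{\alpha}+\ir\boldsymbol{\beta}$ with $\boldsymbol{\alpha},\boldsymbol{\beta}\in\Rbb^d$, one has $\partial_j u=\ir\xi_j u$, hence
\[
|\nabla u|^2=\sum_{j=1}^d|\partial_j u|^2=\sum_{j=1}^d|\xi_j|^2\,|u|^2=|\xib|^2\,|u|^2\,,
\]
where $|\xib|^2=\sum_j|\xi_j|^2=|\boldsymbol{\alpha}|^2+|\boldsymbol{\beta}|^2$ is precisely the quantity over which $\kappa_\Cbb(\Omega)$ is minimised. Integrating and dividing gives Rayleigh quotient $|\xib|^2$. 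A small technicality to address is that $u$ is complex-valued, whereas the minimum above is usually stated for real $v$; this is resolved by splitting $u=\re u+\ir\im u$, observing that each summand has zero mean, and noting that the Rayleigh quotient of $u$ is a weighted average of those of its real and imaginary parts and is therefore also bounded below by $\mu_2(\Omega)$.

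Combining, we obtain $\mu_2(\Omega)\le|\xib|^2$ for every $\xib\in\NC_\Cbb(\Omega)$; taking the infimum over such $\xib$ gives $\mu_2(\Omega)\le\kappa_\Cbb(\Omega)^2$, which is the desired bound. There is essentially no obstacle: the argument is short and the only thing to verify carefully is the complex-gradient identity above, which ensures that the Rayleigh quotient picks up the "right" norm of $\xib$.
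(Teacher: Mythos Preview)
Your proof is correct and follows exactly the same approach as the paper: use the plane wave $\er^{\ir\xib\cdot\xb}$, with $\xib\in\NC_\Cbb(\Omega)$, as a trial function for the variational characterisation of $\mu_2$, and compute its Rayleigh quotient to be $|\xib|^2$. You simply add a bit more detail---the explicit verification that $|\nabla u|^2=|\xib|^2|u|^2$ for complex $\xib$, and the reduction from complex to real test functions---which the paper's proof leaves implicit.
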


\begin{proof} Let $\xib_0\in\NC_\Cbb(\Omega)$, and so $\int_{\Omega} \er^{\ir\xib_0\cdot\xb}\,\dr\xb=0$. This means
that $\langle \er^{\ir\xib_0\cdot\xb},1\rangle_{L_2(\Omega)}=0$, so that $\phi:=\er^{\ir\xib_0\cdot\xb}$ is a test
function for $\mu_2(\Omega)$ (obviously, $\phi\in H^1(\Omega)$). But, by direct computation,
\[
\frac{\|\nabla\phi\|^2_{L_2(\Omega)}}{\|\phi\|^2_{L_2(\Omega)}}=|\xib_0|^2\,.
\]
Thus, $|\xib_0|^2\ge \mu_2(\Omega)$ for any $\xib_0\in\NC_\Cbb(\Omega)$, whence the result.
\end{proof}

In fact, as was shown to us by N. Filonov \cite{Fil2}, one can improve this result to obtain

\begin{lem}\label{Neumann2}
For any   $\Omega\subset\Rbb^d$,
\begin{equation}
\kappa(\Omega)\ge 2\sqrt{\mu_2(\Omega)}\,.
\end{equation}
\end{lem}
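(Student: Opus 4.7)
The plan is to exploit the fact that a real zero $\xib_0\in\NC(\Omega)$ carries more information than was used in Lemma~\ref{Neumann}: since $\widehat{\chi_\Omega}(\xib_0)=0$ with $\xib_0$ real, both $\int_\Omega\cos(\xib_0\cdot\xb)\,\dr\xb=0$ and $\int_\Omega\sin(\xib_0\cdot\xb)\,\dr\xb=0$. The key idea is to use, instead of $\er^{\ir\xib_0\cdot\xb}$, a \emph{half-frequency} real trial function of the form
\[
\phi(\xb)=\cos\left(\tfrac{1}{2}\xib_0\cdot\xb-\phi_0\right)
\]
in the Rayleigh quotient for $\mu_2(\Omega)$. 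The two pieces of information above will be spent twice, once to legitimize $\phi$ as a test function and once to compute its Rayleigh quotient exactly.

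First, I would choose the phase $\phi_0\in\Rbb$ so that $\int_\Omega\phi\,\dr\xb=0$. Expanding $\phi=\cos\phi_0\cos(\tfrac{1}{2}\xib_0\cdot\xb)+\sin\phi_0\sin(\tfrac{1}{2}\xib_0\cdot\xb)$ shows that this is a single linear condition on the two-dimensional vector $(\cos\phi_0,\sin\phi_0)$, so a solution always exists. Moreover $\phi\not\equiv 0$ on $\Omega$ since $\xib_0\ne\zerob$, and $\phi\in H^1(\Omega)$. Hence $\phi$ is an admissible trial function and $\mu_2(\Omega)\le\|\nabla\phi\|^2_{L_2(\Omega)}/\|\phi\|^2_{L_2(\Omega)}$.

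Next, a direct calculation gives
\[
\|\nabla\phi\|^2_{L_2(\Omega)}=\frac{|\xib_0|^2}{4}\int_\Omega\sin^2\left(\tfrac{1}{2}\xib_0\cdot\xb-\phi_0\right)\dr\xb,\qquad
\|\phi\|^2_{L_2(\Omega)}=\int_\Omega\cos^2\left(\tfrac{1}{2}\xib_0\cdot\xb-\phi_0\right)\dr\xb.
\]
The crux is then the identity
\[
\|\phi\|^2_{L_2(\Omega)}-\tfrac{4}{|\xib_0|^2}\|\nabla\phi\|^2_{L_2(\Omega)}=\int_\Omega\cos(\xib_0\cdot\xb-2\phi_0)\,\dr\xb,
\]
and this last integral is the linear combination $\cos(2\phi_0)\int_\Omega\cos(\xib_0\cdot\xb)\,\dr\xb+\sin(2\phi_0)\int_\Omega\sin(\xib_0\cdot\xb)\,\dr\xb$, which vanishes because $\xib_0\in\NC(\Omega)$. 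Thus $\|\nabla\phi\|^2/\|\phi\|^2=|\xib_0|^2/4$ exactly, yielding $\mu_2(\Omega)\le|\xib_0|^2/4$; minimizing over $\xib_0\in\NC(\Omega)$ gives the claim.

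There is no serious obstacle: the entire argument is essentially a clever choice of trial function. The only tiny subtlety worth checking is that the mean-zero condition on $\phi$ is genuinely solvable (it is, as it is one linear equation in the two free parameters $\cos\phi_0,\sin\phi_0$), and that the same vanishing of $\widehat{\chi_\Omega}(\xib_0)$ that was used in Lemma~\ref{Neumann} happens to be exactly what kills the cosine-squared minus sine-squared integral at the doubled frequency $\xib_0$. This ``double use'' of the hypothesis is what produces the factor of $2$ improvement over Lemma~\ref{Neumann}.
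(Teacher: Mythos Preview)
Your proof is correct and is essentially the same as the paper's: both use half-frequency test functions built from $\er^{\pm \ir\xib_0\cdot\xb/2}$. The paper takes the two-dimensional subspace $\LC_2=\operatorname{span}(\er^{\ir\xib_0\cdot\xb/2},\er^{-\ir\xib_0\cdot\xb/2})$ and applies the min-max for $\mu_2$, observing that the Rayleigh quotient equals $|\xib_0|^2/4$ identically on $\LC_2$; your real trial function $\cos(\tfrac12\xib_0\cdot\xb-\phi_0)$ is simply a particular element of this same span, with the choice of phase $\phi_0$ (to enforce orthogonality to constants) playing the role that the two-dimensional min-max plays in the paper.
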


\begin{proof} By the variational principle,
\[
\mu_2(\Omega)\le\sup_{\phi\in\mathcal{L}_2}  \frac{\|\nabla\phi\|^2_{L_2(\Omega)}}{\|\phi\|^2_{L_2(\Omega)}}
\]
for any linear subspace $\mathcal{L}_2\subset H^1(\Omega)$ such that $\dim\mathcal{L}_2=2$. Choose
$\xib_0\in\NC(\Omega)$, and set $\LC_2=\operatorname{span}(\er^{\ir\xib_0\cdot\xb/2}, \er^{-\ir\xib_0\cdot\xb/2})$. 
The elements of $\LC$ are linearly independent, and the result immediately follows by direct computation.
\end{proof}

\begin{exam}[\bfseries A ball in $\Rbb^d$]\label{exam:3.4} For a unit ball $B_d$ and real $\xib$, we have:
\begin{equation}\label{eq:kappa_for_ball}
\widehat{\chi_{B_d}}(\xib)=(2\pi)^{d/2} \frac{J_{d/2}(|\xib|)}{|\xib|^{d/2}}\,,
\end{equation}
and so
\begin{equation}
\kappa(B_d)=j_{d/2,1}\,.
\end{equation}

On the other hand,
\[
\lambda_2(B_d)=\lambda_3(B_d)=\dots=\lambda_{1+d}(B_d)=j_{d/2,1}^2=(\kappa(B_d))^2\,.
\]
\end{exam}

For illustration, we give a proof of \eqref{eq:kappa_for_ball} in dimension $d=2$. We choose the direction of $\xib$ as the $x_1$-axis, and write, in polar coordinates, $\xb = (r\cos\theta, r\sin\theta)$. Thus,
\[
\widehat{\chi_{B_d}}(\xib)=\int_0^1 \int_0^{2\pi} e^{\ir |\xib| r \cos \theta}r \,  \dr r \dr\theta\,.
\]
Then we use formula \cite[formula 9.1.18]{AbrSte}, i.e.,
\[
J_0(z)=\frac{1}{2\pi} \int_0^{2\pi} \cos(z \cos \theta) \, d \theta,
\]
to express the previous integral as
\[
\widehat{\chi_{B_d}}(\xib)= 2 \pi \int_0^1 J_0(|\xib|r)r  \, \dr r=\frac{2\pi}{|\xib|^2} \int_0^{|\xib|}
J_0(s) s \, \dr s\,.
\]
Finally, we use the raising and lowering relations for Bessel
functions embodied in  \cite[formula 9.1.27]{AbrSte} (third formula with
$\nu=1$), i.e.,
\[
J_1'(r)=J_0(r)-\frac{1}{r} J_1(r)\,,
\]
which can be expressed in the more convenient form,
\[
rJ_0(r) =(rJ_1(r))'\,.
\]
Thus, we get,
\[
\widehat{\chi_{B_d}}(\xib)=\frac{2\pi}{|\xib|^2} \int_0^{|\xib|} (sJ_1(s))'\, \dr s = \frac{2\pi}{|\xib|}
J_1(|\xib|),
\]
which is the desired equality \eqref{eq:kappa_for_ball}  in two dimensions. The corresponding
formula in any dimension is equally simple to establish.

\begin{exam}[\bfseries A cuboid in $\Rbb^d$]\label{exam:cube} Consider, for $d\ge 2$, a cuboid $P$ with edge lengths
$a_1\ge a_2\ge\dots\ge a_d>0$.
We have:
\begin{equation}\label{eq:lam2_P}
\lambda_2(P) = \pi^2 \left(4a_1^{-2} + \sum_{j=2}^d (a_j)^{-2}\right).
\end{equation}
On the other hand, if $\xib\in\NC(P)$, we have $\prod_{j=1}^d \sin(\xi_j a_j/2)=0$, and so
$|\xib|$ is minimized by the vector $(2\pi/a_1,0,\dots,0)$, giving
\begin{equation}\label{eq:kap_P}
\kappa(P) = \frac{2\pi}{a_1}<\sqrt{\lambda_2(P)}.
\end{equation}

Proving \eqref{eq:kap_ball_conj} for $P$ requires a bit more effort. We have
\[
P^*=B_d(R)\quad\text{with }
R=\frac{\left(a_1\cdot a_2\cdots a_d\cdot\Gamma(1+d/2)\right)^{1/d}}{\sqrt{\pi}}\,,
\]
and, after some transformations, the required inequality is reduced to
\[
j_{d/2, 1}\ge 2\sqrt{\pi}\left(\Gamma\left(1+\frac{d}{2}\right)\right)^{1/d}\,.
\]
This, in turn, is proved using a combination of Stirling's formula, Lorch's lower bound
$j_{\nu,1}\ge\sqrt{(\nu+1)(\nu+5)}$ \cite{Lor}, and numerical checks for low $d$.
\end{exam}

\begin{exam}[\bfseries A right-angled triangle in $\Rbb^2$] Let $T=T_{1,a}$ be a right-angled triangle with sides $1$,
$a>1$, and $\sqrt{1+a^2}$. One can check, after some computations, that
\[
\kappa(T_{1,a})=2\pi\sqrt{1+a^{-2}}\,.
\]

We remark that both inequalities \eqref{eq:kap_ball_conj} and \eqref{eq:kap_lam_conj} with $\Omega = T$ hold for values of $a$ sufficiently close to one,  but fail for large $a$  or small $a$. This can be checked either by direct computation  (in case of  \eqref{eq:kap_ball_conj}) or by domain monotonicity  (in case of  \eqref{eq:kap_lam_conj}),
by comparing $\lambda_2(T)$ with either $\lambda_2(T_{a,a})=10\pi^2/a^2$ (for small $a$) or
with the second eigenvalue of the rectangle with sides $3/4$ and $a/4$ (for large $a$).

Note that $T$ is not balanced and  we do not conjecture that \eqref{eq:kap_ball_conj} and \eqref{eq:kap_lam_conj}
hold  in general for such domains. It may be plausible that $\kappa_\Cbb(\Omega)\le\kappa(\Omega^*)$ and $\kappa_\Cbb(\Omega)\le\sqrt{\lambda_2(\Omega)}$ for general convex domains, however the study of complex null varieties is outside the scope of this paper.
\end{exam}

\begin{exam}[\bfseries Numerics]
We have also verified  Conjectures~ \ref{conj:kap_ball} and \ref{conj:kap_lam} numerically.  We have conducted (jointly with Brian Krushave, an undergraduate student at Heriot-Watt University, whose research was funded by a Nuffield Foundation undergraduate bursary) a large number of calculations for different multiparametric families of balanced convex domains in the two-dimensional case. A typical example would be a family of rectangles with different circular or elliptic segments added along their sides, in order to produce some stadium-like domains.

The zeros of Fourier transform were found by analytic or numerical integration and minimization, and the eigenvalues of the Dirichlet Laplacian by the finite element method.
\end{exam}

\begin{remark}[\bfseries Estimates of the spectrum]\label{ex:inequalities}

We would like to show how to use estimates of $\kappa(\Omega)$ in spectral inequalities between the eigenvalues $\lambda_n=\lambda_n(\Omega)$ of the Dirichlet Laplacian on $\Omega$ and the eigenvalues $\mu_n=\mu_n(\Omega)$ of the Neumann Laplacian on the same domain. It is known that for general domains we have
\begin{equation}\label{eq:mulam1}
\mu_{n+1}<\lambda_n
\end{equation}
 for each $n$, and, moreover, for convex domains in $\Rbb^d$
we have  
\begin{equation}\label{eq:mulam2}
\mu_{n+d}<\lambda_n
\end{equation}
 \cite{LevWei}. It was conjectured that \eqref{eq:mulam2} holds for all domains; this conjecture remains open, and we remark that 
 a `counterexample' given in the paper by Levine and Weinberger is erroneous.
 
 Estimate \eqref{eq:mulam1} was
proved by Friedlander \cite{Fri} for domains with smooth boundaries; later,
an elegant proof for arbitrary domains was obtained by Filonov \cite{Fil}.
Filonov's proof goes like this. Let $n$ be fixed. Denote
by $\phi_j$ the Dirichlet eigenfunctions of $\Omega$. By the min-max principle,
in order to prove $\mu_{n+1}\le\lambda_n$, it is enough to find a
subspace $\LC$ of $H^1(\Omega)$ such that $\dim \LC=n+1$ and for each $\phi\in \LC\setminus\{0\}$ we have
\begin{equation}\label{lf1}
{\int_{\Omega}|\nabla\phi|^2\,\dr\xb}\le\lambda_n{\int_{\Omega}\phi^2\,\dr\xb}.
\end{equation}
Put $\LC=\operatorname{span}(\phi_1(\xb),\dots,\phi_n(\xb),e^{i\xib\cdot\xb})$, where $\xib$ is any
real vector satisfying $|\xib|^2=\lambda_n$. Obviously, $\dim \LC=n+1$. Suppose now that
$\phi\in \LC$. This means that
\[
\phi=\sum_{j=1}^n a_j\phi_j+b\er^{\ir\xib\cdot\xb}.
\]
Then the left-hand side of of \eqref{lf1} is
\[
\sum_{j=1}^n |a_j|^2\lambda_j+|b|^2\lambda_n\vol_d(\Omega)+
2\sum_{j=1}^n\re\left(a_jb \lambda_n\int_{\Omega}\phi_j\er^{\ir\xib\cdot\xb}\,\dr\xb\right)
\]
(in the last sum, we have integrated by parts using the fact that $\phi_j$ satisfies Dirichlet boundary
conditions on $\partial\Omega$ and that $|\xib|^2=\lambda_n$).
The right-hand side of \eqref{lf1} is
\[
\lambda_n\left(\sum_{j=1}^n |a_j|^2+|b|^2\vol_d(\Omega)
+2\sum_{j=1}^n\re\left(a_jb\int_{\Omega}\phi_j\er^{\ir\xib\cdot\xb}\,\dr\xb\right)\right).
\]
Comparing the last two expressions leads to \eqref{lf1}.

Now suppose we want to improve this result and to show \eqref{eq:mulam2} that for some class of (not necessarily convex) domains $\mu_{n+2}\le\lambda_n$. The natural
approach to try is to add one more exponential to $L$, namely to put
\[
\LC=\operatorname{span}(\phi_1(\xb),\dots,\phi_n(\xb),\er^{\ir\xib_1\cdot\xb},\er^{\ir\xib_2\cdot\xb}),
\]
\begin{equation}\label{lf2}
|\xib_j|^2=\lambda_n\,,\qquad j=1,2.
\end{equation}
Then, in order
for \eqref{lf1} to hold, we must get rid of the cross-term with two exponentials, i.e. we must assume that
\[
\int_{\Omega}\er^{\ir(\xib_1-\xib_2)\cdot\xb}\,\dr\xb=0.
\]
In the notation introduced above, this means that
\begin{equation}\label{lf3}
\xib_1-\xib_2\in\NC(\Omega).
\end{equation}
Obviously, we can choose vectors $\xib_1$, $\xib_2$ satisfying both \eqref{lf2} and \eqref{lf3} iff
$\kappa(\Omega)\le 2\sqrt{\lambda_n(\Omega)}$. Thus, if we could show that for some, not necessarily convex,  $d$-dimensional domain $\Omega$, the estimate \eqref{eq:kap_lam_proved} holds, then the inequality $\mu_{n+2}(\Omega)\le\lambda_n(\Omega)$ will hold for each $n$. Similarly, for any $\Omega$, if we know a number $n_0$ such that $\kappa(\Omega)\le 2\sqrt{\lambda_{n_0}(\Omega)}$, then the inequality $\mu_{n+2}(\Omega)\le\lambda_n(\Omega)$ is guaranteed to hold for $n\ge n_0$.
\end{remark}


\section{Some estimates of $\kappa(\Omega)$ for convex balanced domains}\label{sec:some_estimates}

Throughout this section $\Omega$ is convex and balanced,  and $\Omega$ dependence is frequently dropped; also we always work with \emph{real} zeros of the Fourier transform. 

Our aim here is to prove the following

\begin{thm}\label{thm:diamest}
Suppose that $d=2$ and $D(\Omega)$ is the diameter of $\Omega$. Then
\begin{equation}\label{eq:ineqdiam}
\kappa(\Omega)\le \frac{4\pi}{D(\Omega)}\,.
\end{equation}
\end{thm}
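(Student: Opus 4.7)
The plan is to pick a direction $e\in S^{1}$ along which the diameter of $\Omega$ is realized, to restrict $\widehat{\chi_\Omega}$ to the ray $\Rbb_+ e$, and to exhibit a real zero of this restriction at some $\rho_*\le 4\pi/D$. By convexity, the diameter of $\Omega$ equals its maximal width, so $\Omega$ has width exactly $D$ in direction $e$; by balancedness, $\Omega$ extends from $-D/2$ to $D/2$ along $\Rbb e$. Setting $h(t):=\vol_1(\Omega\cap\{\xb\cdot\eb=t\})$, Fubini gives
\[
f(\rho):=\widehat{\chi_\Omega}(\rho\eb)=\int_{-D/2}^{D/2} h(t)\cos(\rho t)\,\dr t,
\]
and three properties of $h$ matter: $h$ is even (by balancedness), $h$ is concave on $[-D/2,D/2]$ (an elementary form of Brunn's theorem in $\Rbb^{2}$), and $h(\pm D/2)=0$ (if the slice $\Omega\cap\{\xb\cdot\eb=D/2\}$ contained a point $q\ne(D/2)\eb$, then balancedness and the Pythagorean theorem would give $|q-(-q)|>D$, contradicting $\operatorname{diam}(\Omega)=D$). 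In particular $f(0)=\vol_2(\Omega)>0$.

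The key step is to show $f(\rho_0)\le 0$ for $\rho_0:=4\pi/D$, after which continuity of $f$ and the intermediate value theorem immediately produce some $\rho_*\in(0,\rho_0]$ with $f(\rho_*)=0$ and hence $\kappa(\Omega)\le\rho_*\le 4\pi/D$. Observe that $\rho_0 D/2=2\pi$, so $\cos(\rho_0 D/2)=1$ and $\sin(\rho_0 D/2)=0$ --- these are the arithmetic coincidences that drive the argument. Represent the concave function $h$ via the Green's function $G(t,\tau)$ of $-\dr^2/\dr t^2$ on $(-D/2,D/2)$ with Dirichlet boundary conditions,
\[
h(t)=\int_{-D/2}^{D/2} G(t,\tau)\,\dr\mu(\tau),\qquad \mu:=-h''\ge 0,
\]
where $\mu$ is the non-negative Stieltjes measure associated with the non-increasing function $-h'$ (concavity of $h$ is precisely what makes $\mu$ positive; the fact that no boundary masses are needed comes from $h(\pm D/2)=0$). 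Interchanging the order of integration, $f(\rho_0)=\int y_{\rho_0}(\tau)\,\dr\mu(\tau)$, where $y_{\rho_0}(\tau):=\int G(t,\tau)\cos(\rho_0 t)\,\dr t$ solves the two-point boundary value problem $-y''=\cos(\rho_0 t)$ on $(-D/2,D/2)$ with $y(\pm D/2)=0$. Solving this BVP explicitly and using $\cos(\rho_0 D/2)=1$ yields
\[
y_{\rho_0}(\tau)=\frac{\cos(\rho_0\tau)-1}{\rho_0^{2}}\le 0,
\]
so $f(\rho_0)\le 0$, with equality exactly when $\mu$ is a point mass at $\tau=0$, i.e.\ when $\Omega$ is a rhombus with diagonal $D$ along $\eb$ --- which is consistent with the fact that this bound is sharp.

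The only point I expect to require care is the Green's function representation itself, since $h$ need not be $C^{2}$ for a general convex $\Omega$. This is a standard technicality and can be addressed either by interpreting the two integrations by parts as Lebesgue--Stieltjes integrals against the non-decreasing function $-h'$ (the choice of $\rho_0$ kills every boundary term because $h(\pm D/2)=0$, $\sin(\rho_0 D/2)=0$, and $\cos(\rho_0 D/2)=1$), or by approximating $\Omega$ in Hausdorff distance by smooth strictly convex balanced domains and passing to the limit using the continuity of $\kappa$ and $D$ under such convergence. All remaining steps are routine.
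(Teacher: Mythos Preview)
Your argument is correct. Both your proof and the paper's reduce to the same key inequality --- that $f(\rho_0)=\int_{-D/2}^{D/2} h(t)\cos(\rho_0 t)\,\dr t\le 0$ at $\rho_0=4\pi/D$ for the concave cross-section function $h=\nu_\eb$ --- but you reach it by a genuinely different route. The paper proves a general lemma (Lemma~\ref{lem:convineqs}, specifically~\eqref{eq:convineq1}) by comparing a concave $Z$ to the secant line through its values at the quarter-points of a period, so that $Z\cos$ is bounded above by $L\cos$ whose integral over a full period vanishes; this is then fed into Lemma~\ref{lem:kappaj}, which actually yields the stronger family of bounds $\kappa_j(\eb)\le \pi(j+1)/w(\eb)$ for all $j$. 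Your approach instead integrates by parts twice (packaged as a Green's function representation), exploiting the arithmetic coincidence that both antiderivatives of $\cos(\rho_0 t)$ vanish at $t=\pm D/2$, so that $f(\rho_0)=\int(\cos(\rho_0\tau)-1)\rho_0^{-2}\,\dr\mu(\tau)\le 0$. This is slicker for the case $j=1$ at hand and makes the equality case transparent, though it does not immediately give the higher-$j$ bounds. Two small remarks: first, your observation that $h(\pm D/2)=0$ is correct, but in fact the boundary terms in the double integration by parts vanish regardless (since $\sin 2\pi=0$ and $1-\cos 2\pi=0$), so the paper's lemma applies even in directions $\eb$ where the extreme slice has positive length; second, your parenthetical that ``this bound is sharp'' at the rhombus identifies when $f(\rho_0)=0$ in the chosen direction, but $\kappa(\Omega)$ is an infimum over \emph{all} directions, so sharpness of the theorem itself would require a further check.
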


\begin{remark}
After this paper was written, we have discovered that Theorem \ref{thm:diamest} had been previously proved in \cite{Zas}. 
\end{remark}

Note that Theorem \ref{thm:diamest} immediately implies
\begin{cor}\label{cor:largeD}
Conjecture \ref{conj:kap_ball} holds for  convex, balanced domains $\Omega\subset\Rbb^2$ such that the diameter $D(\Omega)$
satisfies
\begin{equation}\label{eq:largeD}
\frac{\sqrt{\pi}D(\Omega)}{2\sqrt{\vol_2(\Omega)}}\ge\frac{2\pi}{j_{1,1}}\,.
\end{equation}
\end{cor}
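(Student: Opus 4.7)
The plan is to deduce the corollary as an immediate consequence of Theorem~\ref{thm:diamest}, combined with the explicit formula for $\kappa(\Omega^*)$ obtained from Example~\ref{exam:3.4} and the rescaling properties of Lemma~\ref{lem:rescaling}. The strategy is simply: bound $\kappa(\Omega)$ above by the diameter estimate, bound $\kappa(\Omega^*)$ below (in fact compute it) in terms of $\vol_2(\Omega)$, and observe that the hypothesis \eqref{eq:largeD} is precisely the algebraic condition ensuring the first is no larger than the second.

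First I would compute $\kappa(\Omega^*)$. Since $\Omega^*$ is the disk centred at $\zerob$ of area $\vol_2(\Omega)$, its radius is $R^*=\sqrt{\vol_2(\Omega)/\pi}$. Example~\ref{exam:3.4} with $d=2$ gives $\kappa(B_2)=j_{1,1}$, and applying the homothety property of Lemma~\ref{lem:rescaling} yields
\[
\kappa(\Omega^*)=\frac{j_{1,1}}{R^*}=j_{1,1}\sqrt{\frac{\pi}{\vol_2(\Omega)}}.
\]

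Then I would invoke Theorem~\ref{thm:diamest} to obtain $\kappa(\Omega)\le 4\pi/D(\Omega)$, so that the desired inequality \eqref{eq:kap_ball_conj} follows once we verify
\[
\frac{4\pi}{D(\Omega)}\le j_{1,1}\sqrt{\frac{\pi}{\vol_2(\Omega)}},
\]
which is exactly a rearrangement of the hypothesis \eqref{eq:largeD}. A brief side remark: checking $D=2R$, $\vol_2=\pi R^2$ for a disk shows that a disk itself fails \eqref{eq:largeD} (the left side equals $1<2\pi/j_{1,1}$), so the equality case in Conjecture~\ref{conj:kap_ball} does not arise under this hypothesis and requires no separate treatment.

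Since all the work is absorbed into Theorem~\ref{thm:diamest}, there is no real obstacle here — the corollary is simply a repackaging of the diameter estimate in the scale-invariant form required by the conjecture.
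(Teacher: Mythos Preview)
Your proposal is correct and is exactly the argument the paper has in mind: the paper simply states that Theorem~\ref{thm:diamest} ``immediately implies'' the corollary, and your write-up is the natural unpacking of this---combine $\kappa(\Omega)\le 4\pi/D(\Omega)$ with $\kappa(\Omega^*)=j_{1,1}\sqrt{\pi/\vol_2(\Omega)}$ and rearrange. Your side remark that a disk fails \eqref{eq:largeD}, so the equality clause of the conjecture is vacuous under this hypothesis, is a sensible addition.
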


The scaling in \eqref{eq:largeD} is chosen in such a way that its left--hand side equals one for a disk.

Let $r_-(\Omega)$ be the \emph{inradius} of a convex balanced domain $\Omega$. Then it is easy to see that there exists a rectangle with sides $2r_-(\Omega)$ and $D(\Omega)$ which contains $\Omega$. Thus
$2r_-(\Omega)D(\Omega)\ge \vol_2(\Omega)$, which together with Corollary \ref{cor:largeD} immediately implies

 \begin{cor}\label{cor:small_r}
 Conjecture \ref{conj:kap_ball} holds for  convex, balanced domains $\Omega\subset\Rbb^2$ such that
 inradius $r_-(\Omega)$ satisfies
\begin{equation}\label{eq:small_r}
\frac{\sqrt{\pi}r_-(\Omega)}{\sqrt{\vol_2(\Omega)}}\le\frac{j_{1,1}}{8}\,.
\end{equation}
\end{cor}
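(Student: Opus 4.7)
The corollary follows by inserting into Corollary \ref{cor:largeD} an elementary geometric lower bound on the diameter in terms of the inradius and the area. The plan has three short steps.

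First, I would verify the geometric assertion already hinted at in the text: a convex balanced domain $\Omega\subset\Rbb^2$ is contained in an axis-aligned (suitably rotated) rectangle with side lengths $2r_-(\Omega)$ and $D(\Omega)$. The key point is that for a convex balanced set, the inscribed disk of maximal radius must be centered at the origin (by convexity together with central symmetry), so the support function $h(\eb)=\max_{\xb\in\Omega}\xb\cdot\eb$ satisfies $h(\eb)\ge r_-(\Omega)$ for all $\eb\in S^1$, with equality attained at some unit vector $\eb_2$ (this realises the minimum width of $\Omega$, which for balanced sets is exactly $2r_-$). Take $\eb_1\perp\eb_2$; by the definition of the diameter, $2h(\eb_1)\le D(\Omega)$, while $2h(\eb_2)=2r_-(\Omega)$. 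The rectangle $\{\xb:|\xb\cdot\eb_1|\le h(\eb_1),\ |\xb\cdot\eb_2|\le h(\eb_2)\}$ contains $\Omega$ and has sides $\le D(\Omega)$ and $=2r_-(\Omega)$.

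Second, comparing areas gives
\begin{equation*}
\vol_2(\Omega)\le 2r_-(\Omega)\cdot D(\Omega)\,,
\end{equation*}
equivalently $D(\Omega)\ge \vol_2(\Omega)/(2r_-(\Omega))$.

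Third, I would plug this lower bound on $D(\Omega)$ into the condition \eqref{eq:largeD} from Corollary~\ref{cor:largeD}. The hypothesis \eqref{eq:small_r} rearranges to
\begin{equation*}
\frac{\sqrt{\pi}\,\sqrt{\vol_2(\Omega)}}{4\,r_-(\Omega)}\ge \frac{2\pi}{j_{1,1}}\,,
\end{equation*}
and the left-hand side equals $\dfrac{\sqrt{\pi}}{2\sqrt{\vol_2(\Omega)}}\cdot\dfrac{\vol_2(\Omega)}{2r_-(\Omega)}$, which by the second step is bounded above by $\dfrac{\sqrt{\pi}\,D(\Omega)}{2\sqrt{\vol_2(\Omega)}}$. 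Hence \eqref{eq:largeD} is satisfied, and Corollary~\ref{cor:largeD} yields Conjecture~\ref{conj:kap_ball} for $\Omega$.

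There is no real obstacle here: the whole argument is a one-line rearrangement once the bounding rectangle is in place. The only step that needs a moment's care is the inradius/minimum-width identification used in the first step, which relies essentially on the centralisation of the inscribed disk that is forced by the balanced hypothesis.
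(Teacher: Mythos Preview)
Your argument is correct and follows exactly the approach of the paper: establish that $\Omega$ sits inside a rectangle of sides $2r_-(\Omega)$ and $D(\Omega)$, deduce $2r_-(\Omega)D(\Omega)\ge\vol_2(\Omega)$, and feed this into Corollary~\ref{cor:largeD}. The paper states all of this in one sentence, while you spell out the inradius/minimum-width identification for balanced convex sets; that extra care is fine and fills in exactly what the paper calls ``easy to see.''
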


\begin{remark}\label{rem:long_lam}
In the same spirit, one can also establish the validity of Conjecture \ref{conj:kap_lam} subject to additional geometric constraints:
if a domain is sufficiently ``long'' (i.e. the left-hand side of
 \eqref{eq:largeD} is sufficiently large or the left-hand side of \eqref{eq:small_r} is sufficiently small), then \eqref{eq:kap_lam_conj} holds. However such an estimate would be  non-explicit, as there is no explicit isoperimetric bound on the second Dirichlet eigenvalue for convex domains, see \cite{Hen}.
\end{remark}

Before proving Theorem \ref{thm:diamest}, we need to introduce some auxiliary notation, and establish some technical facts.

Fix $\eb\in S^{d-1}$, and define the function
$\nu_\eb:\Rbb\to \Rbb$ by
\[
\nu_\eb(t) =\vol_{d-1}\left(\{\xb: x_\eb=t\}\cap\Omega\right)
\]

It is easy to see that $\nu_\eb$ is an even function and has a compact support $\supp \nu_\eb=[-w(\eb),w(\eb)]$, where
$w$ is the support function of $\Omega$, i.e. $w(\eb)$ is a half-breadth of $\Omega$ in direction $\eb$. If $\Omega$ is convex and
$d=2$, then $\nu_\eb$ is a concave function on $[-w(\eb),w(\eb)]$ (this is not true if $d\ge 3$, e.g. when
$\Omega$ is a cube and $\eb$ is a diagonal but in general Brunn--Minkowski inequality implies that
$(\nu_\eb(\rho))^{1/(d-1)}$ is concave on $[-w(\eb),w(\eb)]$). Thus, $\nu_\eb(t)\le\nu_\eb(0)$ and the function $\nu_\eb$ is non-increasing on
$[0, w(\eb)]$.

As we are working with real zeros of the Fourier transform, we can instead work with
\[
\widehat{\chi}_\eb(\rho):=\widehat{\chi}(\rho\eb)=\int_\Omega \cos(\rho\eb\cdot \xb)\,\dr\xb =
2\int_{0}^{w(\eb)} \cos(t\rho) \nu_{\eb}(t)\,\dr t\,.
\]

\begin{lem}\label{lem:convineqs}
Let $Z:[0,z]\to\Rbb$ be 
non-increasing and concave. 
Then
\begin{align}
\int_{2\pi k}^{2\pi (k+1)} Z(t)\cos(t)\dr t &\le 0\,,\label{eq:convineq1}\\
\int_{2\pi (k+1/2)}^{2\pi (k+3/2)} Z(t)\cos(t)\dr t &\ge 0\,,\label{eq:convineq2}\\
\int_{2\pi k}^{2\pi (k+1/2)} Z(t)\cos(t)\dr t &\ge 0\,,\label{eq:convineq3}\\
\int_{2\pi (k+1/2)}^{2\pi (k+1)} Z(t)\cos(t)\dr t &\le 0\label{eq:convineq4}
\end{align}
for $k\in\Nbb$ (assuming that all intervals of integration are inside $[0,z]$).
\end{lem}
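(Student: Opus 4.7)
The plan is to prove the four inequalities in two steps, observing first that \eqref{eq:convineq2} and \eqref{eq:convineq4} reduce to \eqref{eq:convineq1} and \eqref{eq:convineq3}, respectively, via the substitution $t \mapsto t+\pi$. Indeed, this shift maps the interval of integration in \eqref{eq:convineq2} (resp. \eqref{eq:convineq4}) onto that of \eqref{eq:convineq1} (resp. \eqref{eq:convineq3}), produces the factor $\cos(t+\pi) = -\cos t$ which flips the sign, and replaces $Z$ by $\tilde Z(\cdot) := Z(\cdot+\pi)$, which inherits the monotonicity and concavity on the new (shifted) domain of integration. So it suffices to establish \eqref{eq:convineq1} and \eqref{eq:convineq3}.

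For \eqref{eq:convineq3}, I would reflect the interval $[2\pi k, 2\pi k+\pi]$ about its midpoint $2\pi k+\pi/2$. Substituting $u = 4\pi k + \pi - t$ in the integral over $[2\pi k+\pi/2, 2\pi k+\pi]$ and using $\cos(4\pi k+\pi - u) = -\cos u$, one obtains
\[
\int_{2\pi k}^{2\pi k+\pi} Z(t)\cos t\,\dr t = \int_{2\pi k}^{2\pi k+\pi/2} \bigl[Z(t) - Z(4\pi k+\pi-t)\bigr]\cos t\,\dr t\,.
\]
For $t$ in this reduced range one has $t \le 4\pi k+\pi-t$, so the non-increasing property of $Z$ forces the bracket to be non-negative, and since $\cos t \ge 0$ there, the integrand is pointwise non-negative.

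For \eqml{eq:convineq1}, the strategy is the same but preceded by a ``telescoping'' step over the full period. Substituting $u = t+\pi$ in the integral over $[2\pi k+\pi, 2\pi(k+1)]$ gives
\[
\int_{2\pi k}^{2\pi(k+1)} Z(t)\cos t\,\dr t = \int_{2\pi k}^{2\pi k+\pi} g(t)\cos t\,\dr t\,, \quad g(t) := Z(t) - Z(t+\pi)\,.
\]
The key claim is that $g$ is non-decreasing on $[2\pi k, 2\pi k+\pi]$: for $t_1 < t_2$ in this interval,
\[
g(t_2) - g(t_1) = \bigl[Z(t_2) - Z(t_1)\bigr] - \bigl[Z(t_2+\pi) - Z(t_1+\pi)\bigr]\,,
\]
and the concavity of $Z$ (equivalently, the fact that secant slopes of $Z$ are non-increasing as one shifts to the right) makes this difference non-negative. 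Applying the same midpoint reflection trick to $g$ in place of $Z$ then yields
\[
\int_{2\pi k}^{2\pi k+\pi} g(t)\cos t\,\dr t = \int_{2\pi k}^{2\pi k+\pi/2} \bigl[g(t) - g(4\pi k+\pi-t)\bigr]\cos t\,\dr t\,,
\]
which is non-positive because $g$ is non-decreasing (so the bracket is $\le 0$) and $\cos t \ge 0$ on the reduced interval.

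The main---indeed essentially only---point requiring thought is the monotonicity of $g$, and this is precisely where concavity, rather than mere monotonicity of $Z$, is indispensable. As a sanity check, for a convex decreasing $Z$ such as $Z(t)=\er^{-t}$ one finds $\int_0^{2\pi} \er^{-t}\cos t\,\dr t = (1 - \er^{-2\pi})/2 > 0$, so \eqref{eq:convineq1}--\eqref{eq:convineq2} genuinely need concavity, whereas \eqref{eq:convineq3}--\eqref{eq:convineq4} use only monotonicity. Everything else is routine substitution bookkeeping.
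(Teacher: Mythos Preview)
Your argument is correct, and for \eqref{eq:convineq3}--\eqref{eq:convineq4} it is essentially the paper's own proof (midpoint reflection using only monotonicity); your reduction of \eqref{eq:convineq2} and \eqref{eq:convineq4} to \eqref{eq:convineq1} and \eqref{eq:convineq3} by the shift $t\mapsto t+\pi$ is a clean shortcut the paper does not spell out. The genuine difference is in \eqref{eq:convineq1}. The paper compares $Z$ with the secant line $L$ through the points $\bigl(2\pi(k+\tfrac14),Z(2\pi(k+\tfrac14))\bigr)$ and $\bigl(2\pi(k+\tfrac34),Z(2\pi(k+\tfrac34))\bigr)$: concavity forces $Z\ge L$ on the middle half of the period (where $\cos t\le 0$) and $Z\le L$ on the two outer quarters (where $\cos t\ge 0$), so $\int Z\cos\le\int L\cos=0$. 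You instead fold the second half of the period onto the first to obtain $\int g\cos$ with $g(t)=Z(t)-Z(t+\pi)$, invoke concavity in the form ``secant slopes are non-increasing'' to get $g$ non-decreasing, and then reuse the reflection trick. Both approaches isolate exactly the same hypotheses (concavity alone for \eqref{eq:convineq1}--\eqref{eq:convineq2}, monotonicity alone for \eqref{eq:convineq3}--\eqref{eq:convineq4}, as the paper also remarks); yours has the mild advantage of running a single mechanism throughout, while the paper's linear-comparison picture is more immediately geometric. One typographical point: your ``\verb|\eqml|'' should be \verb|\eqref|.
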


\begin{proof} Let $L(t)$ be a linear function such that $L(2\pi(k+1/4))=Z(2\pi(k+1/4))$ and
$L(2\pi(k+3/4))=Z(2\pi(k+3/4))$. Then, by concavity of $Z(t)$, we have
$Z(t)\ge L(t)$ for $t\in [2\pi(k+1/4), 2\pi(k+3/4)]$ (note that $\cos(t)\le 0$ for these values of $t$)
and also $Z(t)\le L(t)$ for $t\in [2\pi k, 2\pi(k+1/4)]\cup[2\pi(k+3/4),2\pi(k+1)]$
(note that $\cos(t)\ge 0$ for these values of $t$). Therefore,
\[
\int_{2\pi k}^{2\pi (k+1)} Z(t)\cos(t)\dr t \le \int_{2\pi k}^{2\pi (k+1)} L(t)\cos(t)\dr t =0\,,
\]
the last equality easily checked by a direct computation. This proves
\eqref{eq:convineq1}, and \eqref{eq:convineq2} is being dealt with similarly.

Further,
\[
\int_{2\pi k}^{2\pi (k+1/2)} Z(t)\cos(t)\dr t=
\int_{2\pi k}^{2\pi (k+1/4)} (Z(t)-Z(2\pi(k+1/2)-t))\cos(t)\dr t\ge 0\,,
\]
since the integrand is non-negative. Inequality  \eqref{eq:convineq4} is similar.
\end{proof}

\begin{remark} Note that \eqref{eq:convineq1} and \eqref{eq:convineq2} require only
concavity of a function $Z$, whereas \eqref{eq:convineq3} and \eqref{eq:convineq4} require only
its monotonicity.
\end{remark}

Recall that $\kappa_j(\eb)$ denotes the $j$-th $\rho$-root (counted  in increasing order with account of multiplicities) of $\widehat{\chi}_\eb(\rho)$.

\begin{lem}\label{lem:kappaj}
Let $d=2$, then
\[
\kappa_{j}(\eb)\le \frac{\pi(j+1)}{w(\eb)}\,.
\]
\end{lem}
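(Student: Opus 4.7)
Set $w := w(\eb)$, $\rho_k := k\pi/w$, $\phi(\rho) := \widehat{\chi}_\eb(\rho)$, and $n := j+1$; it suffices to show that $\phi$ has at least $n-1$ zeros in $(0,\rho_n]$ counted with multiplicity. The plan is first to establish the sign pattern $\phi(0) > 0$ and $(-1)^{k+1}\phi(\rho_k) \ge 0$ for $k \ge 1$ using Lemma~\ref{lem:convineqs}, and then to convert this into a zero count via a small perturbation.

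For the sign data, change variables $u = \rho_n t$ to write $\phi(\rho_n) = (2/\rho_n)\int_0^{n\pi} Z_n(u)\cos(u)\,\dr u$ with $Z_n(u) := \nu_\eb(u/\rho_n)$ non-negative, non-increasing and concave on $[0,n\pi]$. For even $n = 2m$, split the integration range as a union of the $m$ pieces $[2\pi k,2\pi(k+1)]$ and apply \eqref{eq:convineq1} to each; the sum gives $\phi(\rho_{2m}) \le 0$. For odd $n = 2m+1$, split as $[0,\pi] \cup \bigcup_{k=1}^{m}[(2k-1)\pi,(2k+1)\pi]$. On the first piece, the substitution $u \leftrightarrow \pi - u$ yields
\[
\int_0^\pi Z_n(u)\cos(u)\,\dr u = \int_0^{\pi/2}\bigl(Z_n(u) - Z_n(\pi - u)\bigr)\cos(u)\,\dr u \ge 0
\]
by monotonicity. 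On each subsequent piece, write $u = 2k\pi + v$ and use the evenness of cosine to obtain $\int_0^\pi h(v)\cos(v)\,\dr v$, where $h(v) := Z_n(2k\pi + v) + Z_n(2k\pi - v)$; concavity of $Z_n$ forces $h'(v) \le 0$, so the same symmetrization shows this integral is non-negative. Summing gives $\phi(\rho_{2m+1}) \ge 0$, while $\phi(0) = \vol_2(\Omega) > 0$ is immediate.

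To translate the sign data into a zero count, define $\phi_\eps(\rho) := \phi(\rho) - \eps\cos(w\rho)$; for $0 < \eps < \vol_2(\Omega)$ the values $\phi_\eps(\rho_k) = \phi(\rho_k) - \eps(-1)^k$ exhibit the strict sign pattern $+,+,-,+,-,\dots$ at $\rho_0,\dots,\rho_n$, so the intermediate value theorem produces $n-1$ simple real zeros of $\phi_\eps$ in $(\rho_1,\rho_n)$. Letting $\eps \to 0^+$, $\phi_\eps \to \phi$ uniformly on compacta of $\Cbb$, and Hurwitz's theorem on a thin complex neighbourhood of $[\rho_1,\rho_n]$ (with boundary chosen away from the zeros of $\phi$) shows that $\phi$ has at least $n-1$ zeros there counted with multiplicity. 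Since $\phi$ is real on $\Rbb$, non-real zeros come in conjugate pairs and can be excluded by shrinking the neighbourhood further; the remaining zeros all lie on $[\rho_1,\rho_n] \subset (0,\rho_n]$, yielding $\kappa_j(\eb) \le \pi(j+1)/w(\eb)$. The main delicate point is this multiplicity bookkeeping: adjacent simple zeros of $\phi_\eps$ may coalesce into higher-multiplicity real zeros of $\phi$ in the limit---as happens on the rhombus, where $\phi$ has double zeros precisely at the even grid points $\rho_{2k}$---so it is essential that $\kappa_j$ is defined with multiplicities.
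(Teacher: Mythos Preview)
Your proof is correct and follows essentially the same strategy as the paper: establish the alternating sign pattern $(-1)^{k+1}\widehat{\chi}_\eb(\rho_k)\ge 0$ via Lemma~\ref{lem:convineqs} (your odd-case decomposition around the centres $2k\pi$ is a minor variant of the paper's use of \eqref{eq:convineq2}--\eqref{eq:convineq3}, but equivalent), then convert this into a zero count with multiplicities. The one notable difference is the last step: the paper dispatches the multiplicity issue with an informal parenthetical remark (if $\widehat{\chi}_\eb$ stays non-negative and merely touches zero at some $\rho_{2k}$, that zero has multiplicity at least two), whereas you give a uniform treatment via the perturbation $\phi_\eps=\phi-\eps\cos(w\rho)$ and Hurwitz's theorem. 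Your route is more careful and handles all degenerate configurations at once; the paper's is shorter but leaves the general case to the reader.
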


\begin{proof} We have $\widehat{\chi}_\eb(0)>0$. Set $\rho_j=\frac{j\pi}{w(\eb)}$.
Let us show that
\[
\widehat{\chi}_\eb(\rho_j)=\widehat{\chi}_\eb\left(\frac{j\pi}{w(\eb)}\right)
\]
is non-negative when
 $j$ is odd, and is non-positive when $j$ is even.

Assume $j=2k$. Then
\[
\begin{split}
\widehat{\chi}_\eb(\rho_j)&=2\int_{0}^{w} \cos\left(\frac{2\pi k t}{w}\right) \nu_\eb(t)\,\dr t=
\frac{w}{\pi k} \int_{0}^{2\pi k} \cos(\tau)\nu_\eb\left(\frac{\tau w}{2\pi k}\right)\,\dr\tau\\
&=\frac{w}{\pi k}\sum_{\ell=0}^{k-1}   \int_{2\pi\ell}^{2\pi(\ell+1)} \cos(\tau)\nu_\eb\left(\frac{\tau w}{2\pi k}\right)\,\dr\tau\,,
\end{split}
\]
which is non-positive by \eqref{eq:convineq1}.

Assume now that $j=2k+1$. Then
\[
\begin{split}
\widehat{\chi}_\eb(\rho_j)&=2\int_{0}^{w} \cos\left(\frac{2\pi (k+1) t}{w}\right) \nu_\eb(t)\,\dr t=
\frac{w}{\pi k} \int_{0}^{2\pi (k+1)} \cos(\tau)\nu_\eb\left(\frac{\tau w}{2\pi (k+1)}\right)\,\dr\tau\\
&=\frac{w}{\pi k} \int_{0}^{\pi} \cos(\tau)\nu_\eb\left(\frac{\tau w}{2\pi (k+1)}\right)\,\dr\tau\\
&\quad+\frac{w}{\pi k}\sum_{\ell=0}^{k-1}   \int_{2\pi(\ell+1/2)}^{2\pi(\ell+3/2)} \cos(\tau)\nu_\eb\left(\frac{\tau w}{2\pi (k+1)}\right)\,\dr\tau\,,
\end{split}
\]
which is non-negative by \eqref{eq:convineq2} and \eqref{eq:convineq3}.

The result now follows from the Intermediate Value theorem (if, for example, $\widehat{\chi}_\eb(\rho)$ is positive except at the points
$\rho_{2k}$ where it is zero, then each point $\rho_{2k}$ is a zero of multiplicity (at least) two, so we still have
$\kappa_{j}(\eb)\le \frac{\pi(j+1)}{w(\eb)}$)
\end{proof}

Lemma \ref{lem:kappaj} immediately leads  to the main result of this section.

\begin{proof}[Proof of Theorem~\ref{thm:diamest}] By Lemma \ref{lem:kappaj},
\[
\kappa(\Omega)=\inf_{\eb\in S^1} \kappa_1(\eb)\le \inf_{\eb\in S^1}\frac{2\pi}{w(\eb)}=
\frac{2\pi}{\sup_{\eb\in S^1} w(\eb)}=\frac{4\pi}{D(\Omega)}\,.
\]
\end{proof}

\begin{remark}\label{rem:further_inequalities}
It was proved in \cite{Zas} that the function $\kappa_1(\eb)$ is  continuous. Using this fact, one can
establish further relationship between this function and Neumann eigenvalues, similar to Lemma \ref{Neumann}.
For example,  we have:
\begin{equation}
\max_{\eb\in S^1}\kappa_1(\eb)\ge \sqrt{\mu_3(\Omega)}.
\end{equation}
Indeed, recall that  $\NC_1=\NC_1(\Omega)=\displaystyle\bigcup_{\eb\in S^1}\kappa_1(\eb)\eb$. Obviously, $\NC_1(\Omega)\subset\NC(\Omega)$. Assuming the
continuity of $\kappa_1(\eb)$, we see that $\NC_1$ is a continuous closed curve having the origin inside it. Let
$\eb_0$ be arbitrary unit vector so that $\pb_0:=\kappa_1(\eb_0)\eb_0\in\NC_1$. Then the closed curve $\pb_0+\NC_1$
obviously contains both the points inside $\NC_1$ (the origin) and outside $\NC_1$ (for example, the point $2\pb_0$). Therefore, the intersection
$(\pb_0+\NC_1)\cap\NC_1$ is non-empty, say $\pb_1\in (\pb_0+\NC_1)\cap\NC_1$. Then three points $\pb_0$, $\pb_1$, and
$\pb_0-\pb_1$ all belong to $\NC_1$. Now we can argue as in the proof of Lemma \ref{Neumann}, with $\er^{\ir\pb_0\cdot\xb}$,
$\er^{\ir\pb_1\cdot\xb}$ and $1$ being three mutually orthogonal test-functions. This shows that $\sqrt{\mu_3(\Omega)}\le
\max(|\pb_1|,|\pb_2|)\le\max\limits_{\eb\in S^1}\kappa_1(\eb)$.
\end{remark}

\begin{remark}\label{rem:worse_contants}
Using the results of this section and the fact that $D(\Omega) \ge 2\sqrt{{\rm vol}_2(\Omega)/\pi}$, we obtain
\[
\kappa(\Omega)\le \frac{4\pi}{D(\Omega)}\le\frac{2\pi^{3/2}}{\sqrt{{\rm vol}_2(\Omega)}}=\frac{2\pi}{j_{1,1}}\kappa(\Omega^*)\,,
\]
thus proving \eqref{eq:kap_ball_proved} with a numerical constant
\[
C=\tilde{C}_1=\frac{2\pi}{j_{1,1}}\approx 1.6398\,.
\]
\end{remark}

\begin{remark}\label{rem:3d_no_diam}
It should be noted that there is no analog of Theorem \ref{thm:diamest} in dimensions higher than two, i.e. one cannot estimate $\kappa(\Omega)$ in terms of the diameter $D(\Omega)$. Indeed, let $S=\{(x_1, x_2)\in\Rbb^2: |x_1|+|x_2|<1\}$, and let $T$ be the three-dimensional body of revolution obtained by rotating $S$ around the $x_1$-axis. Also, choose $\alpha>0$, and set
$T_\alpha=\{\xb\in\Rbb^3: (x_1, \alpha x_2, \alpha x_3)\in T\}$. Then, as $\alpha\to\infty$,  the distances to origin of all zeros of $\widehat{\chi_{T_\alpha}}$, which are not proportional to $\eb_1=(1,0,0)$, tend to $\infty$  by Lemma \ref{lem:rescaling1d}. On the other hand,
\[
\widehat{\chi_{T_\alpha}}(\xi\eb_1)=\alpha^{-2}\widehat{\chi_{T}}(\xi\eb_1)=2\pi\alpha^{-2}\int_0^1 (1-x)^2\cos(x\xi)\dr x=\frac{2\pi\alpha^{-2}}{\xi^3}(\xi-\sin\xi)>0
\]
for all $\xi\in\Rbb$. Thus, $\kappa(T_\alpha)\to\infty$ as $\alpha\to\infty$, while $D(T_\alpha)=2$. A similar example works in any higher dimension.
\end{remark}


\section{Geometric notation for planar balanced star-shaped domains}\label{sec:geometric}

We set, for a balanced star-shaped domain $\Omega\subset\Rbb^2$, and $r\ge 0$,

\[
\eta(r; \Omega):=\vol_{1}(\Omega\cap\{|\xb|=r\})
\]
and
\[
\alpha(r; \Omega):=\frac{1}{\vol_2 (\Omega)}\int_0^r \eta(\rho; \Omega)\,\dr\rho = \frac{\vol_2 (\Omega\cap B_2(r))}{\vol_2 (\Omega)}
\]
(the normalizing factor $1/\vol_2(\Omega)$ will simplify the computations later on).

Let us also define the numbers
\[
r_-=r_-(\Omega)=\min_{\eb\in S^1} w(\eb)\,,\qquad  r_+:= \max_{\eb\in S^1} w(\eb)\,.
\]
Obviously, $r_-$ is the inradius of $\Omega$ and $2r_+$ is its diameter.

Some properties of the functions $\eta$ and $\alpha$ and the numbers $r_\pm$ are obvious:
\begin{itemize}
\item Both $\eta(r)$ and $\alpha(r)$ are non-negative; additionally, $\alpha(r)$ is non-decreasing;
\item $\eta(r)\equiv 2\pi r$  and $\alpha(r)\equiv \pi r^2/\vol_2(\Omega)$  for $r\le r_-$; moreover,
$r_-=\sup\{r\,:\,\eta(r)=2\pi r\}=\sup\{r\,:\,\alpha(r)=\pi r^2/\vol_2(\Omega)\}$;
\item $\eta(r)\equiv 0$  and $\alpha(r)\equiv \const=1$  for $r\ge r+$; moreover,
$r_+=D/2=\inf\{r\,:\,\eta(r)=0\}=\inf\{r\,:\,\alpha(r)=1\}$ and $\supp \eta=[0, D/2]$.
\end{itemize}

An additional important property is valid for convex domains.
\begin{lem} Let $\Omega\subset\Rbb^2$ be a balanced convex domain.
Then for $r\in [r_-(\Omega), r_+(\Omega)]$, the function $\eta(r)$ is decreasing and the function
$\alpha(r)$ is concave.
\end{lem}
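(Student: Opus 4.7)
\medskip
\noindent\textbf{Proof plan.} The two conclusions coincide: since $\alpha'(r) = \eta(r;\Omega)/\vol_2(\Omega)$ for $r\in(r_-,r_+)$, concavity of $\alpha$ on $[r_-,r_+]$ is literally the statement that $\eta$ is non-increasing there. I therefore plan to show that $\eta$ is strictly decreasing on $(r_-,r_+)$. By approximating $\Omega$ in Hausdorff distance with $C^2$ balanced strictly convex domains, under which $r_\pm$ and $\eta$ vary continuously and monotonicity passes to the limit, I may assume $\partial\Omega$ is smooth and strictly convex.

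In that setting, parametrize $\partial\Omega$ as $P(\theta)=h(\theta)(\cos\theta,\sin\theta)$. Writing $\eta(r)=rL(r)$ with $L(r)=\vol_1\{\theta\in S^1:h(\theta)\geq r\}$, the coarea formula gives, at every regular value $r\in(r_-,r_+)$,
\[
\eta'(r) \;=\; L(r) + rL'(r) \;=\; L(r) - \sum_{\theta_i\in h^{-1}(r)}\frac{r}{|h'(\theta_i)|} \;=\; L(r) - \sum_i\cot\alpha_i,
\]
where $\alpha_i\in(0,\pi/2)$ is the acute angle between $\partial\Omega$ and $\{|\xb|=r\}$ at the intersection point $P(\theta_i)$; the identity $\tan\alpha_i=|h'(\theta_i)|/r$ is immediate from $P'(\theta)=h'(\theta)\hat r + h(\theta)\hat\tau$. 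Decomposing $\Omega\cap\{|\xb|=r\}$ into its connected arcs with angular lengths $\beta_j$ and endpoints $A_j,B_j$, so that $\sum_j\beta_j=L(r)$ and each intersection point appears as some $A_j$ or some $B_j$, the inequality $\eta'(r)\leq 0$ reduces to the per-arc claim
\[
\beta_j \;\leq\; \cot\alpha_{A_j}+\cot\alpha_{B_j}.
\]

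To establish this I study the arc $\gamma_j$ of $\partial\Omega$ joining $A_j$ and $B_j$ through $\{|\xb|>r\}$, namely the part of $\partial\Omega$ bounding the $j$-th connected component of $\Omega\setminus B_2(r)$. A short computation of the tangent angles at the endpoints shows that the total counterclockwise tangent turning along $\gamma_j$ equals exactly $\alpha_{A_j}+\alpha_{B_j}+\beta_j$. The balanced hypothesis enters here decisively: outer arcs come in antipodal pairs under $\xb\mapsto -\xb$ with equal turning, and since the inner arcs of $\partial\Omega$ (lying inside $B_2(r)$) contribute strictly positive turning to the total $2\pi$, the outer arcs' turnings sum to strictly less than $2\pi$; sharing equally within each pair, each individual outer arc turns by strictly less than $\pi$, so $T_j:=\alpha_{A_j}+\alpha_{B_j}$ satisfies $T_j+\beta_j<\pi$. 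Given this, the per-arc inequality follows by elementary trigonometry: AM--GM yields $\sin\alpha_A\sin\alpha_B\leq \sin^2(T/2)$, hence $\cot\alpha_A+\cot\alpha_B=\sin T/(\sin\alpha_A\sin\alpha_B)\geq 2\cot(T/2)$, while $T/2<\pi/2-\beta/2$ combined with $\tan x\geq x$ on $[0,\pi/2)$ gives $\cot(T/2)>\tan(\beta/2)\geq \beta/2$, so that $\cot\alpha_A+\cot\alpha_B\geq 2\cot(T/2)>\beta$.

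The step I expect to need the most care is the turning bound $T_j+\beta_j<\pi$, which is the unique place where the balanced hypothesis is used, and correspondingly the unique place where the argument would break on a general convex domain (a single outer cap of a convex but unbalanced body can in principle turn by more than $\pi$, after which the trigonometric chain fails). The coarea identification of $\eta'(r)$, the decomposition into arcs, and the smoothing approximation from smooth strictly convex $\Omega$ to the general case are all routine.
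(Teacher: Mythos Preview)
Your argument is correct and follows a genuinely different route from the paper's. The paper argues by direct finite comparison: for $r_-<r_1<r_2$ it covers $\Omega\cap\{|\xb|\ge r_1\}$ by the centrally symmetric strips $S_j$ spanned by antipodal pairs of arcs $G_j,\tilde G_j$ of $\Omega\cap\{|\xb|=r_1\}$ (this is where convexity and the balanced hypothesis enter), and then uses the elementary planar fact that each such strip meets $\{|\xb|=r_2\}$ in a set of $\vol_1$ less than $2\vol_1 G_j$; summing gives $\eta(r_2)<\eta(r_1)$ outright. You instead differentiate: you identify $\eta'(r)=L(r)-\sum_i\cot\alpha_i$ via the coarea formula, reduce to the per-arc inequality $\beta_j<\cot\alpha_{A_j}+\cot\alpha_{B_j}$, and extract the key constraint $\alpha_{A_j}+\alpha_{B_j}+\beta_j<\pi$ from the total tangent turning of $\partial\Omega$ combined with the antipodal pairing of outer arcs. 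The paper's proof is shorter, needs no smoothing, and delivers the \emph{strict} inequality directly; yours is more analytic, produces an explicit pointwise formula for $\eta'$, and makes the role of the balanced hypothesis especially transparent (it is precisely what forces each outer arc to turn by less than $\pi$, the step at which a merely convex domain can fail). One small gap to patch: passing to a Hausdorff limit transfers only weak monotonicity, so as written you obtain $\eta$ non-increasing rather than strictly decreasing; you can either run the coarea computation directly for the Lipschitz radial function of the original $\Omega$ at a.e.\ $r$, or simply note that non-increasing already yields the concavity of $\alpha$, which is all that is used downstream in the paper.
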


\begin{proof}
Let us prove that the function $\eta$ is decreasing in the given interval.
Indeed, suppose $r_-<r_1<r_2$. Since $\eta(r_1)<2\pi r_1$, we have:
\[
\left(\Omega\cap\{|\xb|=r_1\}\right)\ne \{|\xb|=r_1\}.
\]
Thus, the set $G:=\Omega\cap\{|\xb|=r\}$ consists of several (possibly, infinitely many,
but at least two) circular arcs, say $G_1,\dots,G_n,\dots$.
Note that $G$ is obviously symmetric with respect to the origin, so if $G_j$ is one of the arcs
of $G$, then the symmetric arc, $\tilde G_j$ is also a part of $G$. Let $S_j$ be the strip based on
$G_j$ and $\tilde G_j$ (i.e. $S_j$ is the smallest centrally symmetric strip containing $G_j$ and
$\tilde G_j$, see Figure \ref{fig:strip}).
\begin{figure}[hbt!]
\begin{center}
\fbox{\includegraphics[width=0.6\textwidth]{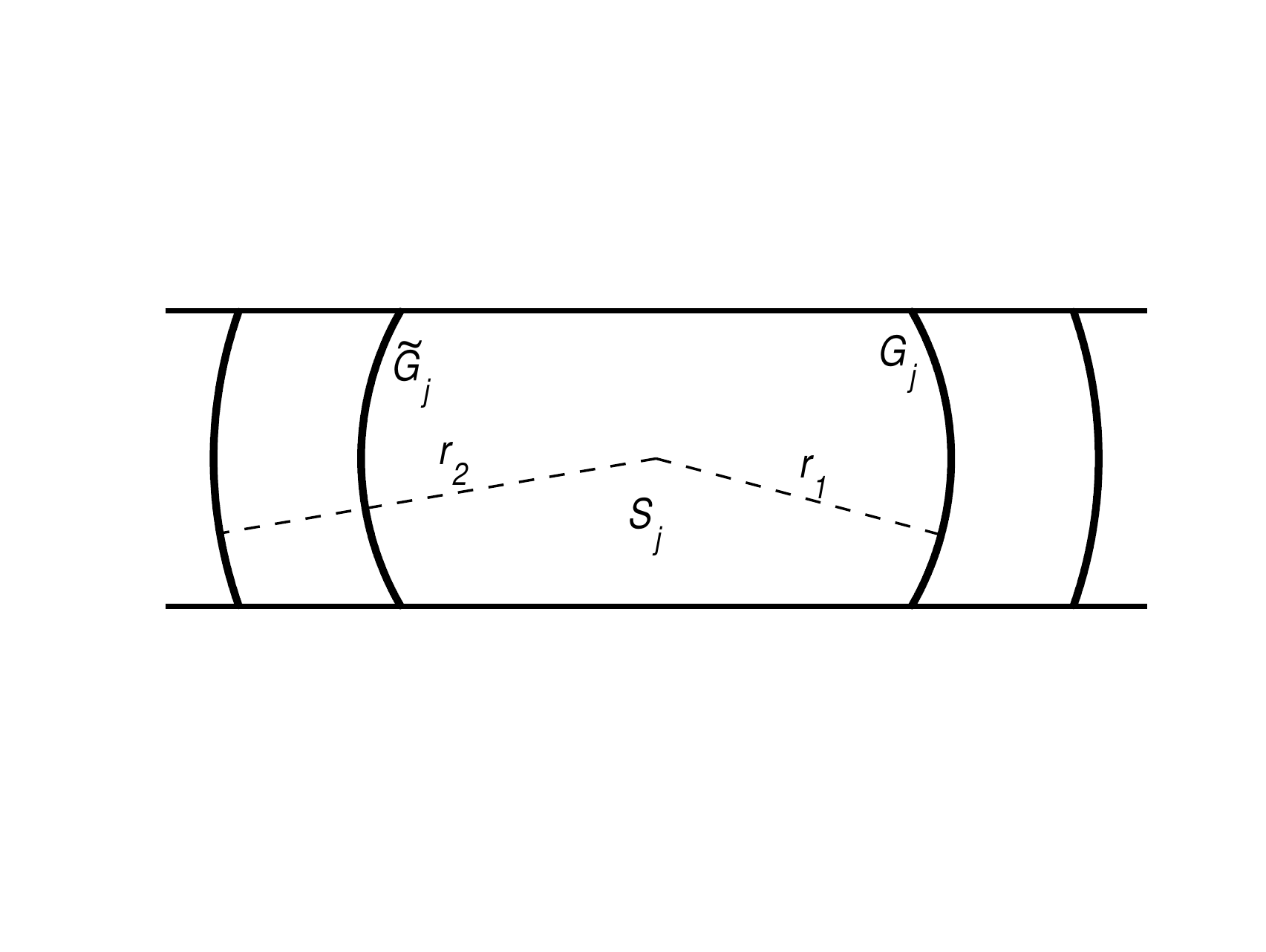}}
\end{center}
\caption{Arcs $G_j$, $\tilde G_j$ and strip $S_j$.}\label{fig:strip}
\end{figure}

Then a little thought shows that the convexity of $\Omega$ implies
\[
(\Omega\cap\{|\xb|\ge r_1\})\subset (\cup_j S_j).
\]
Thus,
\[
\eta(r_2)\le\vol_{1}((\cup_j S_j)\cap\{|\xb|=r_2\}).
\]
However, for each $j$ we have:
\[
\vol_{1}( S_j\cap\{|\xb|=r_2\})<2\vol_1 G_j
\]
(see figure Figure \ref{fig:strip}).
Summing this over $j$, we obtain $\eta(r_1)>\eta(r_2)$.

The concavity of $\alpha$ follows immediately from its definition as an integral of $\eta$.
\end{proof}

\begin{remark}
In a similar manner, one can define the analogues of functions  $\eta$ and $\alpha$ in a higher-dimensional setting. Unfortunately, in general,
the function $\eta$ is no longer decreasing on the interval $[r_-, r_+]$; the simplest counterexample is a strip
$\Omega=\{\xb=(x_1,x_2,x_3)\in\Rbb^3,\, |x_1|<1\}$.
\end{remark}


\section{Proof of Theorem~\ref{thm:kap_ball}}\label{sec:main_proof}

Set
\[
\tau:=2j_{0,1}\,.
\]
Without loss of generality we assume that
\begin{equation}\label{eq:vol_norm}
\vol_2(\Omega)=\pi\tau^2=4\pi j_{0,1}^2\,,
\end{equation}
and so $\Omega^*=B(\tau)$.
Thus,
\[
\kappa(\Omega^*)=\frac{j_{1,1}}{\tau}=\frac{j_{1,1}}{2j_{0,1}}=\frac{1}{\tilde{C}}\,
\]
with $\tilde{C}$ as in \eqref{eq:C_js}, and in order to prove Theorem~\ref{thm:kap_ball}, we need to prove
\begin{equation}\label{eq:kappa_via_j}
\kappa(\Omega)\le 1\,.
\end{equation}

We prove \eqref{eq:kappa_via_j}, and therefore Theorem~\ref{thm:kap_ball} by a sequence of Lemmas. Some of them are rather
technical, and for convenience the proofs of these Lemmas are collected in the next section.

First, Theorem \ref{thm:diamest} implies that if $D(\Omega)\ge 4\pi$, then the statement is proved.

Correspondingly, if the half-breadth of $\Omega$ in some direction $\eb$,
 $w(\eb)<j_{0,1}^2/2$, then, by Theorem \ref{thm:diamest}, the statement is proved since $2r_{-} D \ge \rm{Vol}_2(\Omega)$ as in Corollary \ref{cor:small_r}.
Thus without loss of generality we can assume that
\begin{equation}\label{eq:r_plus_bound}
r_+=D/2<2\pi\,
\end{equation}
and
\begin{equation}\label{eq:r_minus_bound}
r_- > \frac{j_{0,1}^2}{2}\,.
\end{equation}

The following averaging result is one of the central points of the proof.
\begin{lem}\label{lem:aver}
Suppose that
\begin{equation}\label{eq:intineq1}
\int_\Omega J_0(|\xb|)\,\dr\xb\le 0\,.
\end{equation}
Then \eqref{eq:kappa_via_j} holds.
\end{lem}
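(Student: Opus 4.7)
The plan is to exploit the rotational averaging identity that ties $J_0$ to the directional Fourier integral, and then invoke the Intermediate Value Theorem in a single direction. The key point is that the hypothesis \eqref{eq:intineq1} is exactly the statement that a certain angular average of $\widehat{\chi_\Omega}$ on the unit sphere $\rho=1$ is nonpositive, which forces at least one radial line along which $\widehat{\chi_\Omega}$ must change sign before $\rho$ reaches $1$.

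Concretely, I will first recall the Bessel identity used in Example~\ref{exam:3.4},
\[
\frac{1}{2\pi}\int_{0}^{2\pi}\cos(\rho\,\eb(\theta)\cdot\xb)\,\dr\theta = J_0(\rho|\xb|),
\]
where $\eb(\theta)=(\cos\theta,\sin\theta)$; this follows from the integral representation of $J_0$ after choosing polar coordinates aligned with $\xb$. Applying Fubini's theorem to the double integral over $\theta\in[0,2\pi)$ and $\xb\in\Omega$, I obtain the basic averaging identity
\[
\frac{1}{2\pi}\int_{0}^{2\pi}\widehat{\chi_\Omega}(\rho\,\eb(\theta))\,\dr\theta
=\int_{\Omega}J_0(\rho|\xb|)\,\dr\xb\,.
\]

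Next I apply this at $\rho=1$: by hypothesis \eqref{eq:intineq1}, the right-hand side is $\le 0$, hence the average over $\theta$ of $\widehat{\chi_\Omega}(\eb(\theta))$ is $\le 0$. Therefore there exists some $\theta_0$ (equivalently, some unit vector $\eb_0=\eb(\theta_0)$) such that $\widehat{\chi_\Omega}(\eb_0)\le 0$. On the other hand, evaluating at $\rho=0$ gives $\widehat{\chi_\Omega}(\zerob)=\vol_2(\Omega)>0$. Since the scalar function $\rho\mapsto\widehat{\chi_\Omega}(\rho\eb_0)$ is continuous on $[0,1]$, the Intermediate Value Theorem produces some $\rho^*\in(0,1]$ with $\widehat{\chi_\Omega}(\rho^*\eb_0)=0$, i.e.\ $\rho^*\eb_0\in\NC(\Omega)$, giving $\kappa(\Omega)\le\rho^*\le 1$, which is \eqref{eq:kappa_via_j}.

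There is no real obstacle to this argument; its content lies entirely in identifying the hypothesis as a directional average. The only thing to double-check is that the normalization of $\Omega$ in \eqref{eq:vol_norm} is consistent with the threshold $\rho=1$ appearing in both the hypothesis (through $J_0(|\xb|)$, evaluated at $\rho=1$) and the target inequality $\kappa(\Omega)\le 1$; this consistency is precisely why Lemma~\ref{lem:aver} is framed with the weight $J_0(|\xb|)$ rather than $J_0(\rho_0|\xb|)$ for some other $\rho_0$. Thus the lemma is essentially a one-line averaging plus an IVT, with all the real work deferred to showing that \eqref{eq:intineq1} holds in subsequent reductions.
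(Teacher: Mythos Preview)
Your proof is correct and follows essentially the same approach as the paper: both use the angular averaging identity $\frac{1}{2\pi}\int_{S^1}\widehat{\chi_\Omega}(\eb)\,\dr\eb=\int_\Omega J_0(|\xb|)\,\dr\xb$ to deduce that $\widehat{\chi_\Omega}(\eb_0)\le 0$ for some direction $\eb_0$, and then conclude via the Intermediate Value Theorem. The only cosmetic difference is that you argue directly while the paper phrases it as a contradiction.
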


\begin{proof}[Proof of Lemma \ref{lem:aver}]
To prove \eqref{eq:kappa_via_j}, it is enough to show that there exists $\eb\in S^1$ such that
\[
\int_\Omega \cos(x_{\eb})\,\dr\xb \le 0\,.
\]

Suppose this inequality is wrong for all $\eb\in S^1$.
Then
\begin{equation}\label{eq:intineq}
\int_{S^1}\int_\Omega \cos(x_{\eb})\,\dr\xb\,\dr\eb > 0\,.
\end{equation}

Changing the order of integration and acting as in in the proof of \eqref{eq:kappa_for_ball}, we get
\[
\int_\Omega J_0(|\xb|)\,\dr\xb>0\,.
\]
Now the Lemma follows by contradiction.
\end{proof}

We now show that the condition of Lemma \ref{lem:aver} in fact follows from  some integral
inequality being satisfied by a class of functions. Namely, consider a class $\mathcal{A}$ of continuous functions
$\alpha:[0,\infty)\to\Rbb$ with  the following properties:

\begin{itemize}
\item[(a)] $ \alpha(r)$ is non-negative and non-decreasing;
\item[(b)] $\alpha(r)=r^2/(4j_{0,1}^2)$ for $0\le r\le r_-$;
\item[(c)] $\alpha(r)=1$ for $r\ge r_+$;
\item[(d)] $\talp(\tilr)$ is concave for $\trm\le \tilr \le\tilr_+$;
\item[(e)] $j_{0,1}^2/2 < r_- \le 2j_{0,1}\le \tilr_+ < 2\pi$.
\end{itemize}

\begin{lem}\label{lem:tildetau} If
\begin{equation}
\label{eq:int_tilalpha}
\sup_{\alpha\in\mathcal{A}}\int_0^{j_{0,3}} \talp(\tilr)J_1(\tilr) \dr\tilr\le 0\,
\end{equation}
holds, then \eqref{eq:kappa_via_j} holds for all planar convex balanced domains $\Omega$ normalized by \eqref{eq:vol_norm}.
\end{lem}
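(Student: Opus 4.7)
The plan is to combine Lemma \ref{lem:aver} with a one-dimensional radial reduction. If I can show that the hypothesis of Lemma \ref{lem:tildetau} implies $\int_\Omega J_0(|\xb|)\,\dr\xb \le 0$ for our $\Omega$, then Lemma \ref{lem:aver} immediately yields \eqref{eq:kappa_via_j}. So the task splits into two parts: (i) express $\int_\Omega J_0(|\xb|)\,\dr\xb$ as a positive multiple of $\int_0^{j_{0,3}}\alpha(r;\Omega)J_1(r)\,\dr r$, and (ii) verify that the radial profile $r\mapsto\alpha(r;\Omega)$ from Section \ref{sec:geometric} actually lies in $\mathcal{A}$.

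For (i), I would use the coarea formula to rewrite
\[
\int_\Omega J_0(|\xb|)\,\dr\xb = \int_0^\infty\eta(r;\Omega)J_0(r)\,\dr r = \vol_2(\Omega)\int_0^\infty \alpha'(r;\Omega)J_0(r)\,\dr r,
\]
and then integrate by parts, using $J_0'(r)=-J_1(r)$ together with the boundary values $J_0(0)=1$, $\alpha(0;\Omega)=0$, $J_0(\infty)=0$, $\alpha(\infty;\Omega)=1$, to obtain $\vol_2(\Omega)\int_0^\infty \alpha(r;\Omega)J_1(r)\,\dr r$. To truncate the upper limit to $j_{0,3}$, I would exploit the fact that $\alpha(r;\Omega)\equiv 1$ on $[r_+,\infty)$ together with $r_+<2\pi<j_{0,3}$; the tail discrepancy then equals $\int_{j_{0,3}}^\infty J_1(r)\,\dr r = J_0(j_{0,3})-J_0(\infty) = 0$. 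The choice of the third zero of $J_0$ (rather than $j_{0,1}$ or $j_{0,2}$) is dictated precisely by needing $j_{0,k}>r_+$ under the a priori bound \eqref{eq:r_plus_bound}, since $j_{0,2}\approx 5.52 < 2\pi$.

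For (ii), properties (a), (c), and (d) of $\mathcal{A}$ are literally what is recorded in Section \ref{sec:geometric}; property (b) follows from $\alpha(r;\Omega)=\pi r^2/\vol_2(\Omega)$ on $[0,r_-]$ together with the volume normalization \eqref{eq:vol_norm}; and the sandwich estimates in (e) combine the a priori restrictions \eqref{eq:r_plus_bound}--\eqref{eq:r_minus_bound} with two elementary inclusions, namely $\alpha(r_-;\Omega)\le 1$ (which forces $r_-\le 2j_{0,1}$) and $\Omega\subset B_2(r_+)$ together with \eqref{eq:vol_norm} (which forces $r_+\ge 2j_{0,1}$). I do not anticipate any real obstacle in the proof of this lemma itself; it is essentially a clean bookkeeping step that isolates the genuine difficulty into verifying the integral inequality \eqref{eq:int_tilalpha} across the full class $\mathcal{A}$, and that will presumably be the subject of the subsequent technical Lemmas in Section \ref{sec:lemmas}.
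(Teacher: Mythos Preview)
Your proposal is correct and follows essentially the same approach as the paper's own proof: rewrite $\int_\Omega J_0(|\xb|)\,\dr\xb$ radially via $\eta$, integrate by parts with $J_0'=-J_1$ to obtain $\vol_2(\Omega)\int_0^\infty\alpha(r)J_1(r)\,\dr r$, truncate at $j_{0,3}$ using $\alpha\equiv 1$ on $[r_+,\infty)$ together with $r_+<2\pi<j_{0,3}$ and $J_0(j_{0,3})=0$, and then observe that $\alpha(\cdot;\Omega)\in\mathcal{A}$ by the properties recorded in Section~\ref{sec:geometric} plus the normalizations \eqref{eq:vol_norm}, \eqref{eq:r_plus_bound}, \eqref{eq:r_minus_bound} and the elementary inclusions $\pi r_-^2\le\vol_2(\Omega)\le\pi r_+^2$. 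Your justification for the choice $k=3$ (namely $j_{0,2}<2\pi<j_{0,3}$) is exactly the paper's reasoning as well.
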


\begin{proof}[Proof of Lemma \ref{lem:tildetau}]
We continue the calculations in the proof of Lemma  \ref{lem:aver}.
Using the geometric notation introduced in the previous Section, we have:
\[
\int_\Omega J_0(|\xb|)\,\dr\xb=\int_0^\infty \eta(r)J_0(r)\,\dr r=
\vol_2(\Omega)\int_0^\infty \alpha(r) J_1(r)\dr r
\]
(the last identity is proved by integration by parts using $J_0'= - J_1$).

By Lemma  \ref{lem:aver}, we need to show that
\[
\int_0^\infty \alpha(r) J_1(r)\dr r\le 0\,.
\]

If for some $k\in\Nbb$, $\alpha(j_{0,k})=1$, then $\alpha(r)=1$ for $r\ge j_{0,k}$, and so, after integration by parts,
\[
\int_{j_{0,k}}^\infty \alpha(r) J_1(r)\dr r  = \int_{j_{0,k}}^\infty J_1(r)\dr r = J_0(j_{0,k}) = 0\,.
\]

We need to choose which $k$ to take. In our case $\alpha(r)=1$ whenever $r\ge D/2$, so by \eqref{eq:r_plus_bound} we need to choose $k$ such that $j_{0,k}>2\pi$ and we can take $k=3$, see the Table \ref{tab:horiz} below.

Thus, we need to show that
\[
I:=\int_0^{j_{0,3}} \alpha(r) J_1( r)\dr r\le 0\,.
\]

The conditions (a)--(d) are just the re-statement of the properties of the function $\alpha$ summarized at the start of  the previous Section with account of normalization \eqref{eq:vol_norm}; condition (e) re-states  \eqref{eq:r_minus_bound}, \eqref{eq:r_plus_bound}, and also the obvious inequalities $\pi r_-^2\le\vol_2(\Omega)\le\pi r_+^2$.
\end{proof}

It is useful here to plot the function $J_1(\tilr)$ and other quantities appearing above.
\begin{figure}[hbt!]
\begin{center}
\fbox{\includegraphics[width=0.8\textwidth]{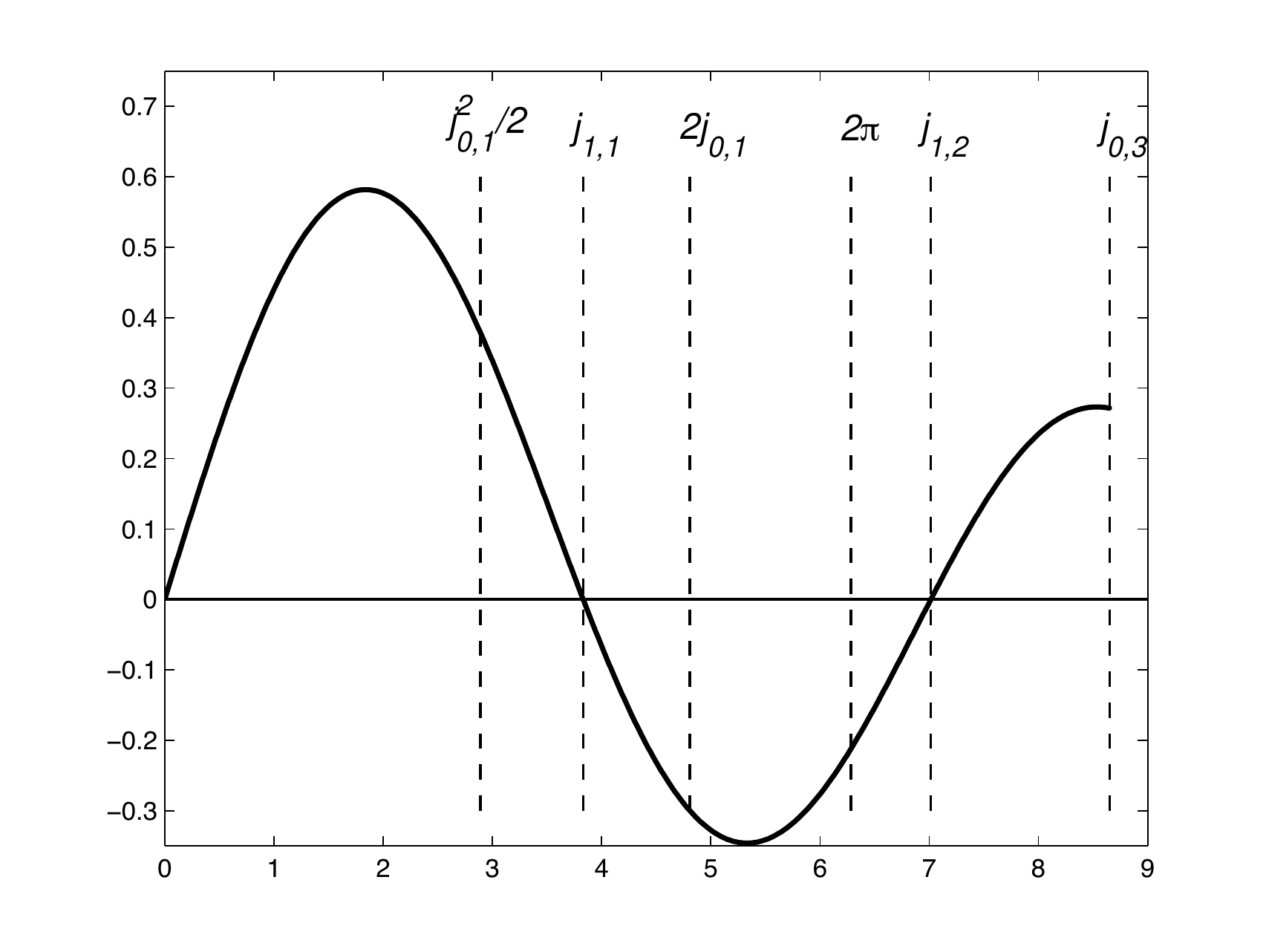}}
\caption{Graph of $J_1(\tilr)$.}\label{fig:J1}
\end{center}
\end{figure}

For future use, we also give two tables of approximate decimal values of various constants
appearing here and below. The first one lists the values appearing along the horizontal axis in various graphs, and the second one lists the values
along the vertical axis. In both Tables the values are sorted out in increasing order.

\begin{table}[hbt!]
\begin{center}
\begin{tabular}{|r|r@{.}l|}
\hline
$2\pi-\sqrt{4\pi^2-\tau^2}$& 2&240206980\\
$\tau^2/8=j_{0,1}^2/2$& 2&891592982\\
$j_{1,1}$& 3&831705970\\
$\tau=2j_{0,1}$& 4&809651116\\
$2\pi$& 6&283185308\\
$j_{1,2}$ & 7&015586670\\
$j_{0,3}$ & 8&653727913 \\
$2\pi+\sqrt{4\pi^2-\tau^2}$& 10&326163640\\
\hline
\end{tabular}
\caption{Decimal values of constants appearing along the horizontal axis.}\label{tab:horiz}
\end{center}
\end{table}

\begin{table}[hbt!]
\begin{center}
\begin{tabular}{|r|r@{.}l|}
\hline
$L$ (see \eqref{eq:L_const})&-0&0852948043\\
$y_{\mathrm{min}}L+M$ (see \eqref{eq:est_final})&-0&0072444612\\
$M$ (see \eqref{eq:M_const})&0&0386824043\\
$c(j_{1,1}^2/\tau^2)=\frac{\tau^2-j_{1,1}^2}{\tau^2(2\pi-j_{1,1})}$ (see \eqref{eq:c_and_d})&0&3403496255\\
$y_{\mathrm{min}}$ (see \eqref{eq:ymin_def})&0&5384485717\\
$j_{1,1}^2/\tau^2$&0&6346834915\\
\hline
\end{tabular}
\end{center}
\caption{Decimal values of constants appearing along the vertical axis.}\label{tab:vert}
\end{table}

The key points of the proof are the estimates of the function
$\talp(\tilr)$ which are collected in the following sequence of Lemmas.

We start by denoting $y_{1,1}:=\talp(j_{1,1})$, and we also intoduce a new constant
\begin{equation}\label{eq:ymin_def}
y_{\mathrm{min}}:=1-\frac{(2\pi-j_{1,1})(64-\tau^2)}{8(16\pi-\tau^2)}\,.
\end{equation}

\begin{lem}\label{lem:y11_est}
For the functions $\talp(\tilr)$ satisfying the conditions (a)--(e) above,
\[
y_{1,1}\ge y_{\mathrm{min}}\,.
\]
\end{lem}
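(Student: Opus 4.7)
The plan is to determine the pointwise infimum at $r=j_{1,1}$ of the class $\mathcal{A}$ and to verify it equals $y_{\min}$.

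Since $r_+\ge\tau>j_{1,1}$ always by condition (e), only two cases arise according to whether $r_-$ exceeds $j_{1,1}$ or not. In the easy case $r_-\ge j_{1,1}$, condition (b) gives $\alpha(j_{1,1})=j_{1,1}^2/\tau^2$, which Table~\ref{tab:vert} shows is comfortably larger than $y_{\min}$. So it suffices to treat $r_-<j_{1,1}$.

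Here the key observation is the \emph{chord reduction}: concavity of $\alpha$ on $[r_-,r_+]$ (condition (d)), together with the endpoint values $\alpha(r_-)=r_-^2/\tau^2$ and $\alpha(r_+)=1$ inherited from (b) and (c) by continuity, forces the graph of $\alpha$ to lie on or above the chord joining these endpoints. Evaluating at $r=j_{1,1}$ yields
$$\alpha(j_{1,1})\;\ge\;f(r_-,r_+)\;:=\;\frac{(r_+-j_{1,1})\,r_-^2/\tau^2+(j_{1,1}-r_-)}{r_+-r_-}.$$
A short computation gives
$$\frac{\partial f}{\partial r_+}=\frac{(j_{1,1}-r_-)(r_-^2/\tau^2-1)}{(r_+-r_-)^2}\le 0$$
(because $r_-<j_{1,1}$ and $r_-\le\tau$), so $f(r_-,\cdot)$ is non-increasing in $r_+$ and $f(r_-,r_+)\ge g(r_-):=f(r_-,2\pi)$, reducing the problem to a one-variable minimization of $g$ on $(\tau^2/8,j_{1,1}]$.

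To analyse $g$, I would differentiate and simplify; using $A\tau^2=2\pi-j_{1,1}$, the numerator of $g'(r)$ collapses to $A\bigl[4\pi^2-\tau^2-(r-2\pi)^2\bigr]$ with $A=(2\pi-j_{1,1})/\tau^2>0$. Hence $g'$ has exactly two zeros at $r=2\pi\pm\sqrt{4\pi^2-\tau^2}$, between which $g$ is strictly increasing. Table~\ref{tab:horiz} places these zeros at approximately $2.24$ and $10.33$, and the interval $[\tau^2/8,j_{1,1}]\subset[2.89,3.84]$ lies strictly between them. Therefore $g(r_-)>g(\tau^2/8)$. Finally, a routine algebraic manipulation of
$$g(\tau^2/8)=\frac{(\tau^2/64)(2\pi-j_{1,1})+j_{1,1}-\tau^2/8}{2\pi-\tau^2/8}$$
converts it to the closed form $1-(2\pi-j_{1,1})(64-\tau^2)/\bigl(8(16\pi-\tau^2)\bigr)=y_{\min}$ of \eqref{eq:ymin_def}, completing the argument.

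The only genuine obstacle is spotting the chord reduction, which replaces an infinite-dimensional minimization over $\mathcal{A}$ by a clean two-variable calculus problem; everything else is bookkeeping, aided by the numerical values tabulated on p.~\pageref{tab:horiz}. Note that the strict inequalities in condition (e) mean $y_{\min}$ is approached but never attained, so the bound is in fact strict.
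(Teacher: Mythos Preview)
Your proof is correct and follows essentially the same approach as the paper: both use the chord-below-concave-graph bound and then minimize the resulting linear-interpolation value over $r_-\in[\tau^2/8,\,j_{1,1}]$, locating the same critical points $2\pi\pm\sqrt{4\pi^2-\tau^2}$. The paper streamlines your argument slightly by noting that conditions (a), (c), (d) together make $\alpha$ concave on all of $[r_-,2\pi]$ (not just $[r_-,r_+]$), so one can take the chord directly to $(2\pi,1)$ and skip your intermediate minimization in $r_+$.
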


The proof of this Lemma is in the next section.

Now, given the function $\talp(\tilr)$  and using the value of $y_{1,1}=\talp(j_{1,1})$ as a parameter, we construct two new functions. One of them is a linear function
$v(\tilr)=c(y_{1,1})\tilr+d(y_{1,1})$, where the coefficients $c$ and $d$ are chosen to be
\begin{equation}\label{eq:c_and_d}
c=c(y_{1,1})=\frac{1-y_{1,1}}{2\pi-j_{1,1}}\,;\qquad
d=d(y_{1,1}):= 1-2\pi c = \frac{2\pi y_{1,1}-j_{1,1}}{2\pi-j_{1,1}}\,.
\end{equation}
The graph of $v(\tilr)$ is a straight line joining the points
$A_1=(j_{1,1},\talp(j_{1,1}))=(j_{1,1},y_{1,1})$ and $A_+=(2\pi, \talp(2\pi))=(2\pi,1)$.

The other function is a piecewise-continuous one given by
\begin{equation}\label{eq:alpha_approx}
\talp_{\mathrm{approx}}(\tilr):=
\begin{cases}
\tilr^2/\tau^2\quad&\text{for } \tilr\in[0, \tau^2/8]\,;\\
y_{1,1}&\text{for } \tilr\in(\tau^2/8, j_{1,1}]\,;\\
v(\tilr)\quad&
\text{for } \tilr\in[j_{1,1}, 2\pi]\,;\\
1\quad&\text{for } \tilr\in[2\pi,j_{0,3}]\,.
\end{cases}
\end{equation}

\begin{figure}[hbt!]
\begin{center}
\fbox{\includegraphics[width=0.8\textwidth]{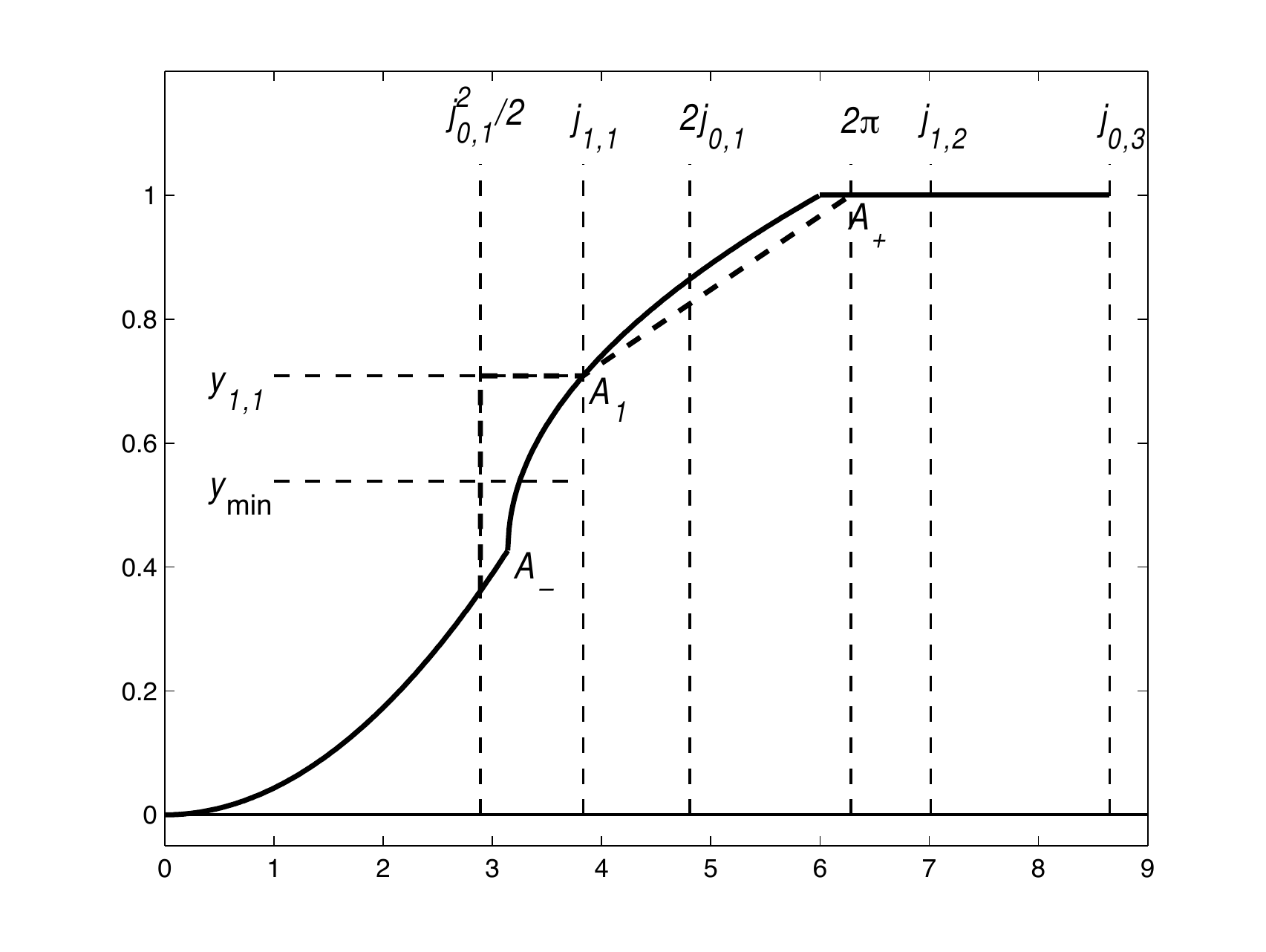}}
\end{center}
\caption{A typical graph of $\talp(\tilr)$ (solid line) and $\talp_{\mathrm{approx}}(\tilr)$ (dashed line). The points have coordinates $A_-=(\trm,\talp(\trm))=(\trm,\trm^2/\tau^2)$, $A_1=(j_{1,1},\talp(j_{1,1}))=(j_{1,1},y_{1,1})$, and $A_+=(2\pi, \talp(2\pi))=(2\pi,1)$.}\label{fig:alpha}
\end{figure}

Obviously, $\talp(\tilr)\equiv \talp_{\mathrm{approx}}(\tilr)\equiv 1$ for $\tilr\ge 2\pi$.

\begin{lem}\label{lem:alp_ge_alp_approx}
Let $\talp(\tilr)$ satisfy conditions (a)--(e). Then
\begin{equation}\label{eq:lessj11}
\talp(\tilr)\le\talp_{\mathrm{approx}}(\tilr)\qquad\text{for }\tilr\in[0, j_{1,1}]\,.
\end{equation}
and
\begin{equation}\label{eq:grj11}
\talp(\tilr)\ge\talp_{\mathrm{approx}}(\tilr)\qquad\text{for }\tilr\in [j_{1,1}, 2\pi]\,.
\end{equation}
\end{lem}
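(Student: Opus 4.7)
The plan is to prove the two inequalities separately. For \eqref{eq:lessj11} the argument is a direct consequence of monotonicity; for \eqref{eq:grj11} it hinges on concavity of $\talp$ on $[r_-,2\pi]$ together with a case analysis based on the relative position of $r_-$ and $j_{1,1}$.

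First I would dispatch \eqref{eq:lessj11}. On $[0,\tau^2/8]$, conditions (b) and (e) give $\talp(\tilr)=\tilr^2/\tau^2=\talp_{\mathrm{approx}}(\tilr)$, so equality holds; on $[\tau^2/8,j_{1,1}]$ we have $\talp_{\mathrm{approx}}(\tilr)=y_{1,1}=\talp(j_{1,1})$, and the required bound $\talp(\tilr)\le y_{1,1}$ is immediate from monotonicity (a).

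For \eqref{eq:grj11}, I set $f(\tilr):=\talp(\tilr)-v(\tilr)$ on $[j_{1,1},2\pi]$. By construction $f(j_{1,1})=0$, and $f(2\pi)=0$ since $\talp\equiv 1$ on $[r_+,\infty)$ by (c) while $r_+<2\pi$ by (e). A preparatory observation I would establish is that $\talp$ is concave on the whole interval $[r_-,2\pi]$: it is concave on $[r_-,r_+]$ by (d), identically $1$ on $[r_+,2\pi]$, and concavity across the junction $r_+$ follows from continuity combined with $\talp\le 1$ on $[r_-,r_+]$ (a standard gluing argument for a non-decreasing concave function that reaches its maximum and then stays constant). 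The argument then branches on the position of $r_-$ relative to $j_{1,1}$. \emph{Case 1} ($r_-\le j_{1,1}$): the whole interval $[j_{1,1},2\pi]$ lies in the concavity range of $\talp$, so $f$ (concave minus linear) is concave and vanishes at both endpoints, hence $f\ge 0$. \emph{Case 2} ($r_->j_{1,1}$): on $[j_{1,1},r_-]$, condition (b) forces $\talp(\tilr)=\tilr^2/\tau^2$ and $y_{1,1}=j_{1,1}^2/\tau^2$, so $f$ is a convex quadratic with $f(j_{1,1})=0$; a short calculation via Vieta and \eqref{eq:c_and_d} shows that its other root equals $\tilr_0=(\tau^2-2\pi j_{1,1})/(2\pi-j_{1,1})$. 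On the remaining interval $[r_-,2\pi]$, $f$ is concave, vanishes at $2\pi$, and by the previous step satisfies $f(r_-)\ge 0$, so $f\ge 0$ by the endpoint minimum principle for concave functions.

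The delicate step --- which I expect to serve as the main obstacle --- is verifying $\tilr_0\le 0$ in Case 2, which reduces to the numerical inequality $\tau^2=4j_{0,1}^2<2\pi j_{1,1}$ (approximately $23.13<24.07$, consistent with the entries of Table \ref{tab:horiz}). Without it, the convex parabola $\tilr^2/\tau^2$ could dip below the chord $v(\tilr)$ immediately to the right of $j_{1,1}$, and the subsequent concavity argument on $[r_-,2\pi]$ would lose its positive lower bound at $r_-$. Once this numerical fact is in hand, the rest of the case analysis is routine.
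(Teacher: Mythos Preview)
Your proof is correct and follows essentially the same route as the paper's: monotonicity for \eqref{eq:lessj11}, and for \eqref{eq:grj11} the same case split on whether $r_-\le j_{1,1}$, using concavity of $\talp$ on $[r_-,2\pi]$. The only cosmetic difference is in Case~2, where the paper verifies $\tilr^2/\tau^2\ge v(\tilr)$ for $\tilr\ge j_{1,1}$ by checking the sign of the derivative at $j_{1,1}$ rather than locating the second root via Vieta; both arguments reduce to an elementary numerical inequality.
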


The proof of this Lemma is in the next section.

Lemma \ref{lem:alp_ge_alp_approx} immediately implies, with account of the fact that
$J_1(\tilr)$ changes sign from plus to minus at $\tilr = j_{1,1}$, the following

\begin{cor}\label{cor:int_ineq}
\begin{equation}\label{eq:int_ineq}
\int_0^{j_{0,3}} \talp(\tilr) J_1(\tilr) \dr \tilr
\le \int_0^{j_{0,3}} \talp_{\mathrm{approx}}(\tilr) J_1(\tilr) \dr \tilr\,.
\end{equation}
\end{cor}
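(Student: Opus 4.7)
The plan is to split the integration interval $[0,j_{0,3}]$ into three pieces dictated by the definition of $\talp_{\mathrm{approx}}$ in \eqref{eq:alpha_approx} and by the sign behaviour of $J_1$:
\begin{equation*}
[0,j_{0,3}] = [0,j_{1,1}] \cup [j_{1,1},2\pi] \cup [2\pi, j_{0,3}],
\end{equation*}
and to verify that on each piece the integrand $(\talp_{\mathrm{approx}}-\talp)J_1$ is pointwise non-negative. The key numerical facts, visible in Table~\ref{tab:horiz}, are that $j_{1,1}<2\pi<j_{1,2}$, so $J_1$ is non-negative on $[0,j_{1,1}]$ and non-positive on $[j_{1,1},2\pi]$.

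On the first piece $[0,j_{1,1}]$, inequality \eqref{eq:lessj11} of Lemma~\ref{lem:alp_ge_alp_approx} gives $\talp(\tilr)\le\talp_{\mathrm{approx}}(\tilr)$; combined with $J_1(\tilr)\ge 0$ this yields $\talp(\tilr)J_1(\tilr)\le\talp_{\mathrm{approx}}(\tilr)J_1(\tilr)$. On the second piece $[j_{1,1},2\pi]$, inequality \eqref{eq:grj11} reverses the comparison to $\talp(\tilr)\ge\talp_{\mathrm{approx}}(\tilr)$, but since $J_1(\tilr)\le 0$ on this interval, multiplying reverses the inequality again and we still obtain $\talp(\tilr)J_1(\tilr)\le\talp_{\mathrm{approx}}(\tilr)J_1(\tilr)$.

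The only point that deserves a brief sanity check is the tail $[2\pi,j_{0,3}]$, where $J_1$ changes sign at $j_{1,2}\in(2\pi,j_{0,3})$ and Lemma~\ref{lem:alp_ge_alp_approx} says nothing. Here, however, both functions are identically equal to $1$: by definition \eqref{eq:alpha_approx}, $\talp_{\mathrm{approx}}\equiv 1$ on $[2\pi,j_{0,3}]$, while property (c) together with the assumption \eqref{eq:r_plus_bound} gives $\tilr_+<2\pi$, so $\talp\equiv 1$ on $[r_+,\infty)\supset[2\pi,j_{0,3}]$. Thus the integrand difference vanishes on the tail, and integrating the pointwise inequality $\talp J_1\le\talp_{\mathrm{approx}} J_1$ on the remaining two pieces and summing yields \eqref{eq:int_ineq}. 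There is essentially no obstacle beyond this bookkeeping, as all the real work has been carried out in Lemma~\ref{lem:alp_ge_alp_approx}; the corollary is a short sign-chasing argument whose only non-trivial input is the numerical ordering $j_{1,1}<2\pi<j_{1,2}$ and the vanishing of $\talp-\talp_{\mathrm{approx}}$ beyond $2\pi$.
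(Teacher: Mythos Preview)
Your proof is correct and follows essentially the same approach as the paper. The paper's own argument is a single sentence (``Lemma~\ref{lem:alp_ge_alp_approx} immediately implies, with account of the fact that $J_1(\tilr)$ changes sign from plus to minus at $\tilr = j_{1,1}$, the following''), together with the earlier observation that $\talp\equiv\talp_{\mathrm{approx}}\equiv 1$ for $\tilr\ge 2\pi$; your version simply makes the three-piece decomposition and sign-chasing explicit.
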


The integral in the right-hand side of \eqref{eq:int_ineq} can be explicitly calculated as a function
of the parameter $y_{1,1}$, although the expressions are quite  complicated.
We introduce two constants,
\begin{equation}\label{eq:L_const}
\begin{split}
L&:=J_0\left(\frac{\tau^2}{8}\right)\\
&-\frac{1}{2\pi-j_{1,1}}
\Biggl(\pi^2 J_1(2\pi)\mathbf{H}_0(2\pi)-\pi^2 J_0(2\pi)\mathbf{H}_1(2\pi)+
\frac{\pi j_{1,1}}{2}J_0(j_{1,1})\mathbf{H}_1(j_{1,1})\\
&\quad+j_{1,1}J_0(j_{1,1})+2\pi J_0(2\pi)\Biggr)
\end{split}
\end{equation}
and
\begin{equation}\label{eq:M_const}
\begin{split}
M&:=\frac{1}{8}J_2\left(\frac{\tau^2}{8}\right)\\
&+\frac{1}{2\pi-j_{1,1}}
\Biggl(\pi^2 J_1(2\pi)\mathbf{H}_0(2\pi)-\pi^2 J_0(2\pi)\mathbf{H}_1(2\pi)+
\frac{\pi j_{1,1}}{2}J_0(j_{1,1})\mathbf{H}_1(j_{1,1})\\
&\quad-j_{1,1}J_0(j_{1,1})+2\pi J_0(2\pi)\Biggr)\,;
\end{split}
\end{equation}
in the above formulae $\mathbf{H}$ denote the Struve functions \cite[Chapter 12]{AbrSte}.
The numerical values of $L$ and $M$ can be found in the table above.

\begin{lem}\label{lem:approx_int}
\[
\int_0^{j_{0,3}} \talp_{\mathrm{approx}}(\tilr) J_1(\tilr) \dr \tilr = Ly_{1,1} + M\,.
\]
\end{lem}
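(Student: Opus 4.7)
The plan is a direct calculation: I split $\int_0^{j_{0,3}}\talp_{\mathrm{approx}}(\tilr)J_1(\tilr)\,\dr\tilr$ into four integrals corresponding to the four pieces of $\talp_{\mathrm{approx}}$ in \eqref{eq:alpha_approx}, evaluate each in closed form via standard Bessel identities, and then collect terms linear in $y_{1,1}$ (which should yield $L$) and terms independent of $y_{1,1}$ (which should yield $M$).

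For the first piece, on $[0,\tau^2/8]$, the identity $(\tilr^2 J_2(\tilr))'=\tilr^2 J_1(\tilr)$ gives an immediate closed form in terms of $J_2(\tau^2/8)$; this piece has no $y_{1,1}$ dependence and so contributes only to $M$. For the second piece, on $(\tau^2/8,j_{1,1}]$, the antiderivative $\int J_1\,\dr\tilr=-J_0$ yields $y_{1,1}\bigl[J_0(\tau^2/8)-J_0(j_{1,1})\bigr]$, contributing only to the coefficient of $y_{1,1}$. For the fourth piece, on $[2\pi,j_{0,3}]$, the same antiderivative gives $J_0(2\pi)-J_0(j_{0,3})=J_0(2\pi)$, using the crucial fact that $j_{0,3}$ is a zero of $J_0$ so the upper endpoint contribution disappears.

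The third piece, on $[j_{1,1},2\pi]$, does most of the work. The integrand is $(c\tilr+d)J_1(\tilr)$, with $c,d$ affine in $y_{1,1}$ by \eqref{eq:c_and_d}. I would treat the $d\cdot J_1$ part again with $\int J_1=-J_0$. For $c\tilr J_1(\tilr)$, I would use integration by parts, $\int \tilr J_1\,\dr\tilr=-\tilr J_0+\int J_0\,\dr\tilr$, together with the classical Struve-function antiderivative
\[
\int J_0(\tilr)\,\dr\tilr=\tilr J_0(\tilr)+\frac{\pi\tilr}{2}\bigl[J_1(\tilr)\mathbf{H}_0(\tilr)-J_0(\tilr)\mathbf{H}_1(\tilr)\bigr]+\mathrm{const}\,.
\]
The $\pm\tilr J_0$ contributions cancel to leave the clean primitive $\int \tilr J_1\,\dr\tilr=\frac{\pi\tilr}{2}[J_1(\tilr)\mathbf{H}_0(\tilr)-J_0(\tilr)\mathbf{H}_1(\tilr)]$. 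Evaluating at the endpoints $2\pi$ and $j_{1,1}$, and noting that the boundary contribution at $j_{1,1}$ loses its $J_1\mathbf{H}_0$ term because $J_1(j_{1,1})=0$, produces exactly the Struve-function combinations appearing in \eqref{eq:L_const}--\eqref{eq:M_const}.

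What remains is arithmetic: substitute the linear expressions for $c$ and $d$ into the third-piece contribution, collect by powers of $y_{1,1}$, and add in the first, second and fourth pieces. The $y_{1,1}^1$-coefficient, after cancellation with the $-J_0(j_{1,1})$ from the second piece through the $1/(2\pi-j_{1,1})$ prefactors from $c$ and $d$, should reduce to $L$; the $y_{1,1}^0$ part, aggregating the closed-form contribution from the first piece, the $J_0(2\pi)$ from the fourth, and the $y_{1,1}$-independent part of the third piece, should reduce to $M$. The only real obstacle is bookkeeping: several terms with different prefactors $j_{1,1}$, $2\pi$, $\pi^2$, $\pi j_{1,1}/2$ partially cancel, and one must handle the rational factor $1/(2\pi-j_{1,1})$ consistently in order to reach the precise form of the constants stated in \eqref{eq:L_const}--\eqref{eq:M_const}.
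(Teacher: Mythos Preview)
Your proposal is correct and follows essentially the same approach as the paper: the paper's proof consists of precisely the three Bessel integral identities you invoke (namely $\int J_1=-J_0$, $\int xJ_1\,\dr x=\frac{\pi x}{2}(J_1\mathbf{H}_0-J_0\mathbf{H}_1)$, and $\int x^2J_1\,\dr x=x^2J_2$), applied piecewise to the definition \eqref{eq:alpha_approx}. You have simply spelled out the bookkeeping in more detail than the paper does.
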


With account of Lemmas \ref{lem:tildetau}, \ref{lem:y11_est}, and \ref{lem:approx_int}, and Corollary \ref{cor:int_ineq} we immediately have
\begin{equation}
\label{eq:est_final}
\int_0^{j_{0,3}} \talp(\tilr) J_1(\tilr) \dr \tilr \le Ly_{\mathrm{min}} + M\approx -0.00724446126 <0\,,
\end{equation}
which finishes the proof of the Theorem.


\section{Proofs of Lemmas}\label{sec:lemmas}

\begin{proof}[Proof of Lemma \ref{lem:y11_est}]
There are two possibilities. If $\trm\ge j_{1,1}$, then
$\talp(j_{1,1})=\frac{j_{1,1}^2}{\tau^2}$ by condition (b), and the claim of the Lemma is true.
We thus need to consider a case when  $\trm\le j_{1,1}$.

Let us introduce a linear function $y(\tilr):=a(\trm)\tilr+b(\trm)$, where the constants $a$ and $b$ depend
upon $\trm$ as a parameter and are chosen to be
\begin{equation}\label{eq:a_and_b}
a=a(\trm)=\frac{\tau^2-\trm^2}{\tau^2(2\pi-\trm)}\,;\qquad
b =b(\trm):= 1-2\pi a = \frac{\trm(2\pi\trm -\tau^2)}{\tau^2(2\pi-\trm)}\,.
\end{equation}
The graph of $y(\tilr)$ is a straight line joining the points
$A_-=(\trm,\talp(\trm))=(\trm,\trm^2/\tau^2)$ and $A_+=(2\pi, \talp(2\pi))=(2\pi,1)$.

The function $\talp(\tilr)$ is concave on the interval $[\trm, 2\pi]$ by conditions (c) and (d),
and its graph passes through the points $A_-$ and $A_+$. Thus, this graph lies above the straight line joining
$A_-$ and $A_+$, and therefore
\begin{equation}\label{eq:conc_gr_str}
\talp(\tilr)\ge y(\tilr)\qquad\text{for }\tilr\in[\trm, 2\pi]\,.
\end{equation}

As $j_{1,1}\in[\trm, 2\pi]$, \eqref{eq:conc_gr_str} implies
\[
\talp(j_{1,1})\ge a(\trm)j_{1,1}+b(\trm)=1-(2\pi-j_{1,1})a(\trm)\,,
\]
and as in our case $\trm$ can take values only in the interval $[\tau^2/8, j_{1,1}]$, we have
\[
\talp(j_{1,1})\ge 1-(2\pi-j_{1,1})\max\limits_{\trm\in[\tau^2/8, j_{1,1}]} a(\trm)
\]

We have
\[
\frac{\dr a(\trm)}{\dr \trm}=\frac{\trm^2-4\pi\trm+\tau^2}{\tau^2(2\pi-\trm)^2}\,.
\]
The roots of the numerator in the right-hand side are $2\pi\pm\sqrt{4\pi^2-\tau^2}$, and as seen from the table above the derivative is negative for $\trm\in[\tau^2/8, j_{1,1}]$.
Thus
\begin{equation}\label{eq:y11est}
y_{1,1}\ge 1-(2\pi-j_{1,1})a(\tau^2/8)\,.
\end{equation}
It is an easy manipulation to check that the right-hand side of \eqref{eq:y11est} equals $y_{\mathrm{min}}$.
\end{proof}

\begin{proof}[Proof of Lemma \ref{lem:alp_ge_alp_approx}]
As $\talp_{\mathrm{approx}}(\tilr)=\talp(\tilr)$ for $\tilr$ in the interval $[0, \tau^2/8]$, and $\talp_{\mathrm{approx}}(\tilr)=y_{1,1}=\talp(j_{1,1})$ for $\tilr$ in the interval
$[\tau^2/8, j_{1,1}] $, inequality \eqref{eq:lessj11} follows immediately from the monotonicity
condition (a).

In order to prove \eqref{eq:grj11}, we again need to consider two cases. First,
if $\trm< j_{1,1}$, then $\talp(\tilr)$ is concave for $\tilr\in [j_{1,1}, 2\pi]$, and its graph
between the points $A_1$ and $A_+$ lies above the straight
line joining this points. Thus, it remains to consider the case $\trm\ge j_{1,1}$ (and so
$y_{1,1}=j_{1,1}^2/\tau^2$).

We now  show  that in this case
\begin{equation}\label{eq:r2_ge_alp_approx}
\frac{\tilr^2}{\tau^2}\ge v(\tilr)=c(j_{1,1}^2/\tau^2)\tilr+d(j_{1,1}^2/\tau^2)
\qquad\text{for }\tilr\ge j_{1,1}\,.
\end{equation}
Indeed, consider
\[
u(\tilr):=\frac{\tilr^2}{\tau^2}-c(j_{1,1}^2/\tau^2)\tilr-d(j_{1,1}^2/\tau^2)\,.
\]
We have
\[
u(j_{1,1})=0\,,
\]
and also for $\tilr\ge j_{1,1}$,
\[
\frac{\dr u}{\dr\tilr}(\tilr)=2\frac{\tilr}{\tau^2}-c(j_{1,1}^2/\tau^2)\ge
2\frac{j_{1,1}}{\tau^2}-\frac{\tau^2-j_{1,1}}{\tau^2(2\pi-j_{1,1})}>0
\]
(see table above for numerical values), which proves \eqref{eq:r2_ge_alp_approx}.

Thus, $\talp(\trm)\ge\talp_{\mathrm{approx}}(\trm)$.  As, by concavity, the graph of
$\talp(\tilr)$ between the points $A_-$ and $A_+$ lies above the straight line joining these points,
and the graph of $\talp_{\mathrm{approx}}(\tilr)$ is a straight line joining the point
$(\trm, \talp_{\mathrm{approx}}(\trm))$ (which is located below $A_-$) with $A_+$,
inequality \eqref{eq:grj11} follows.
\end{proof}

\begin{proof}[Proof of Lemma \ref{lem:approx_int}]
The result follows from straightforward integration of \eqref{eq:alpha_approx} using the
standard relations
\begin{align*}
\int J_1(x)\dr x &= - J_0(x)\qquad\qquad\qquad\text{\cite[formula 11.1.6]{AbrSte}}\,;\\
\int xJ_1(x)\dr x &= -\int xJ_0'(x)\dr x=-xJ_0(x)+\int J_0(x)\dr x\\
 &= \frac{\pi x}{2}\,\left(J_1(x)\mathbf{H}_0(x)-J_0(x)\mathbf{H}_1(x)\right)
\quad \text{\cite[formula 11.1.7]{AbrSte}}\,;\\
\int x^2J_1(x)\dr x &= x^2J_2(x)\qquad\qquad\qquad \text{\cite[formula 11.3.20]{AbrSte}}\,.
\end{align*}
\end{proof}


\section{Perturbation-type results}\label{sec:perturbations}

In this Section, we prove Theorem \ref{thm:asympt_est}. In order to do this, we need to compute the one-sided derivatives of $\sqrt{\lambda_2(\Omega_{\eps F})}$ and
$\kappa(\Omega_{\eps F})$ with respect to the parameter $\eps$ describing the deformations of the disk.
The first derivative is easily computable from the following classical result  (see e.g., \cite{Hen,Rel}) which we state without proof:

\begin{thm} [Derivative of a multiple Dirichlet eigenvalue]\label{thm:t1}
Let $\Omega_0\subset\mathbb{R}^d$ be a bounded domain with $C^2$ boundary. Assume that
$\lambda_k(\Omega_0)=\dots=\lambda_{k+p-1}(\Omega_0)$ is a multiple Dirichlet eigenvalue of order $p \ge 2$. Let us
denote by $u_{k_1}, u_{k_2}, \dots, u_{k_p}$ an orthonormal family
of eigenfunctions associated to $\lambda_k$. Let $\mathbf{S}(t):\mathbb{R}^d\to\mathbb{R}^d$, $t\in[0, t_0)$, be a continuously differentiable with respect to  $t$
family of mappings such that $\mathbf{S}(0)$ is an identity, and let $\Omega_t=\mathbf{S}(t)(\Omega_0)$. Then,
the function $t \to \lambda_k(\Omega_t)$ has a (directional) derivative at $t=+0$
which is one of the eigenvalues of the $p \times p$ matrix $M=[m_{i,j}] $
defined by
\begin{equation}\label{eq:der1}
m_{i,j} = - \int_{\partial \Omega} \left(
\frac{\partial u_{k_i}}{\partial n} \frac{\partial u_{k_j}}{\partial
n} \right) \mathbf{S}'(0)(\sigma)\cdot \mathbf{n}(\sigma) \, \dr \sigma, \qquad i,j=1,\dots p\,,
\end{equation}
where $\mathbf{n}(\sigma)$ is an exterior normal to $\partial\Omega$ at the point $\sigma\in\partial\Omega$.
\end{thm}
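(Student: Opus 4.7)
The plan is to reduce the result to perturbation theory for a one-parameter family of self-adjoint operators acting on a fixed Hilbert space, and then to identify the first-order term via Hadamard-type integration by parts.

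First I would pull the Dirichlet eigenvalue problem on $\Omega_t$ back to the fixed domain $\Omega_0$ via the change of variables $\mathbf{y}=\mathbf{S}(t)(\mathbf{x})$. Setting $u(\mathbf{x})=v(\mathbf{S}(t)(\mathbf{x}))$ for $v\in H^1_0(\Omega_t)$, the Dirichlet Rayleigh quotient on $\Omega_t$ pulls back to
\[
\mathcal{R}_t(u)=\frac{a_t(u,u)}{b_t(u,u)},\quad a_t(u,u)=\int_{\Omega_0}\nabla u^{\mathsf T}C(t)\nabla u\,\dr\mathbf{x},\quad b_t(u,u)=\int_{\Omega_0} u^2\rho(t)\,\dr\mathbf{x},
\]
with $\rho(t)=|\det D\mathbf{S}(t)|$ and $C(t)=\rho(t)(D\mathbf{S}(t))^{-1}(D\mathbf{S}(t))^{-\mathsf T}$. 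Both forms are defined on the fixed Hilbert space $H^1_0(\Omega_0)$ and depend $C^1$ on $t$ by the assumption on $\mathbf{S}(t)$, and the corresponding generalized eigenvalue problem has the same spectrum as the Dirichlet Laplacian on $\Omega_t$.

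Since $\lambda_k(\Omega_0)$ is isolated with finite multiplicity $p$, Rellich's theorem on analytic perturbation of self-adjoint families applied to $(a_t,b_t)$ furnishes $p$ one-sided continuous branches $\lambda_k(t),\dots,\lambda_{k+p-1}(t)$ issuing from $\lambda_k(\Omega_0)$ at $t=0+$, each right-differentiable, together with corresponding right-differentiable eigenfunction branches. Differentiating $a_t(u_t,u_t)=\lambda(t)\,b_t(u_t,u_t)$ along such a branch, and observing that the contribution of $\dot u_0$ drops out because the leading eigenfunction $u_0\in\operatorname{span}(u_{k_1},\dots,u_{k_p})$ is itself an eigenfunction of $\lambda_k(\Omega_0)$, I would obtain
\[
\lambda'(0+)=\dot a_0(u_0,u_0)-\lambda_k(\Omega_0)\,\dot b_0(u_0,u_0).
\]
Polarizing in $u_0=\sum c_i u_{k_i}$ identifies $\lambda'(0+)$ as an eigenvalue of the symmetric $p\times p$ matrix $M=[m_{ij}]$ with $m_{ij}=\dot a_0(u_{k_i},u_{k_j})-\lambda_k(\Omega_0)\,\dot b_0(u_{k_i},u_{k_j})$, the eigenvector $(c_i)$ selecting the branch.

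Finally, I would recast $m_{ij}$ as a boundary integral. With $\mathbf{V}:=\mathbf{S}'(0)$ one computes $\dot\rho(0)=\operatorname{div}\mathbf{V}$ and $\dot C(0)=(\operatorname{div}\mathbf{V})I-D\mathbf{V}-D\mathbf{V}^{\mathsf T}$; substituting these and using the eigenvalue equation $-\Delta u_{k_\ell}=\lambda_k u_{k_\ell}$ together with integration by parts turns the bulk integrand for $m_{ij}$ into a pure divergence. The divergence theorem reduces $m_{ij}$ to an integral over $\partial\Omega_0$, and the Dirichlet condition $u_{k_\ell}|_{\partial\Omega_0}=0$, which forces $\nabla u_{k_\ell}=(\partial u_{k_\ell}/\partial n)\mathbf{n}$ on $\partial\Omega_0$, kills every tangential contribution and leaves exactly the expression in \eqref{eq:der1}. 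The main obstacle is the bookkeeping in this last step: several cross-terms must cancel after symmetrization in $i,j$, and the eigenvalue equation has to be applied to the perturbed gradient in weak form to reduce everything to a normal-derivative expression. A subsidiary point is the justification of one-sided $C^1$ differentiability of the branches in the multiple case, which is the standard content of Rellich's theorem once the $C^1$ dependence of $a_t$ and $b_t$ has been established in Step~1.
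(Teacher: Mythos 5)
The paper does not actually prove Theorem~\ref{thm:t1}: the text immediately before it reads ``which we state without proof'' and defers to the references \cite{Hen,Rel}. So there is no in-paper argument to compare against; what can be said is that your outline reproduces the standard Hadamard-type shape-derivative argument found in those references, and in its main structure it is correct. The pullback to the fixed form $(a_t,b_t)$ on $H^1_0(\Omega_0)$, the cancellation of the $\dot u_0$ contribution by the eigenvalue equation, the polarization to the $p\times p$ matrix on the unperturbed eigenspace, and the reduction of the bulk integral to a boundary term via $\dot\rho(0)=\operatorname{div}\mathbf{V}$, $\dot C(0)=(\operatorname{div}\mathbf{V})I-D\mathbf{V}-D\mathbf{V}^{\mathsf T}$, the eigenvalue equation, and the fact that $\nabla u_{k_\ell}=(\partial u_{k_\ell}/\partial n)\mathbf{n}$ on $\partial\Omega_0$ are all the right moves and lead precisely to~\eqref{eq:der1}.

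One point you should tighten is the invocation of ``Rellich's theorem on analytic perturbation of self-adjoint families.'' The hypotheses here give only $C^1$ dependence of $\mathbf{S}(t)$ on $t$, hence only $C^1$ dependence of $a_t$ and $b_t$; the one-parameter family is not assumed analytic, so the classical analytic Rellich theorem (analytic eigenvalue and eigenvector branches) does not apply as stated. What is needed, and what does hold, is the version for continuously differentiable families of quadratic forms with an isolated eigenvalue of finite multiplicity: one then obtains the existence of one-sided directional derivatives of the (set of) eigenvalues, and these one-sided derivatives are the eigenvalues of the matrix $[\dot a_0(u_{k_i},u_{k_j})-\lambda_k\,\dot b_0(u_{k_i},u_{k_j})]$ on the unperturbed eigenspace. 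This is exactly why the theorem only asserts a directional derivative at $t=0+$ rather than full differentiability, a distinction you gesture at (``right-differentiable branches'') but attribute to the wrong theorem. With that citation corrected, the proposal is a sound sketch of the standard proof.
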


Theorem \ref{thm:t1} implies,

\begin{lem}\label{lem:pert_ball} Let $\Omega_{\eps F}$ be as in Theorem \ref{thm:asympt_est}. Then
\begin{equation} \label{eq:der0}
\left.\frac{\dr \sqrt{\lambda_2(\Omega_{\eps F})}}{\dr\eps}\right|_{\eps=+0} =
-\frac{j_{1,1} }{2\pi} \left \vert \int_0^{2\pi} F(\theta) e^{2i \theta}\,\dr\theta\right \vert\,.
\end{equation}
\end{lem}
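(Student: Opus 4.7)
The plan is to apply Theorem \ref{thm:t1} directly, so I first need to identify the eigenspace and the deformation velocity. At $\eps=0$ the domain is the unit disk $B_2$, whose Dirichlet spectrum gives $\lambda_2(B_2)=\lambda_3(B_2)=j_{1,1}^2$, a double eigenvalue. An orthonormal eigenbasis (in polar coordinates) is
\[
u_1(r,\theta)= c\,J_1(j_{1,1}r)\cos\theta,\qquad u_2(r,\theta)= c\,J_1(j_{1,1}r)\sin\theta,
\]
where the normalization is determined from $\int_0^1 J_1(j_{1,1}r)^2\,r\,\dr r=\tfrac12 J_0(j_{1,1})^2$ (using $J_1'(j_{1,1})=J_0(j_{1,1})$ because $J_1(j_{1,1})=0$) together with $\int_0^{2\pi}\cos^2\theta\,\dr\theta=\pi$, yielding $c^2=2/(\pi J_0(j_{1,1})^2)$. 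The outward normal derivatives at $\partial B_2$ are then $\partial u_1/\partial n|_{r=1}=c\,j_{1,1}J_0(j_{1,1})\cos\theta$ and similarly with $\sin\theta$ for $u_2$; squaring $cj_{1,1}J_0(j_{1,1})$ gives the convenient constant $A^2:=2j_{1,1}^2/\pi$.

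Next, I identify the velocity of the deformation. Since a boundary point of $B_2$ at angle $\theta$ is displaced to $(1+\eps F(\theta))(\cos\theta,\sin\theta)$, the normal component of $\mathbf S'(0)$ on $\partial B_2$ equals $F(\theta)$, and the surface measure is $\dr\sigma=\dr\theta$. Plugging into \eqref{eq:der1} gives
\[
m_{ij}= -A^2\int_0^{2\pi} f_i(\theta)f_j(\theta)\,F(\theta)\,\dr\theta, \qquad (f_1,f_2)=(\cos\theta,\sin\theta).
\]
Using $\cos^2\theta=\tfrac12(1+\cos 2\theta)$, $\sin^2\theta=\tfrac12(1-\cos 2\theta)$, $\cos\theta\sin\theta=\tfrac12\sin 2\theta$, and the area-preserving condition $\int_0^{2\pi}F=0$, I get
\[
M=-\tfrac{A^2}{2}\begin{pmatrix} p & q\\ q & -p\end{pmatrix},\quad p=\int_0^{2\pi}F\cos 2\theta\,\dr\theta,\quad q=\int_0^{2\pi}F\sin 2\theta\,\dr\theta.
\]

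The matrix in brackets is traceless with determinant $-(p^2+q^2)$, so its eigenvalues are $\pm\sqrt{p^2+q^2}$, and the eigenvalues of $M$ are $\mp\tfrac{A^2}{2}\sqrt{p^2+q^2}=\mp\tfrac{j_{1,1}^2}{\pi}\bigl|\int_0^{2\pi}F(\theta)\er^{2\ir\theta}\,\dr\theta\bigr|$, where I used $p+\ir q=\int F\er^{2\ir\theta}$. By Theorem \ref{thm:t1} the one-sided derivatives of $\lambda_2(\Omega_{\eps F})$ and $\lambda_3(\Omega_{\eps F})$ at $\eps=0{+}$ are the two eigenvalues of $M$; since $\lambda_2\le\lambda_3$, the branch corresponding to $\lambda_2$ must be the smaller (negative) one,
\[
\left.\frac{\dr\lambda_2(\Omega_{\eps F})}{\dr\eps}\right|_{\eps=0+} =-\frac{j_{1,1}^2}{\pi}\left|\int_0^{2\pi}F(\theta)\er^{2\ir\theta}\,\dr\theta\right|.
\]
The claimed formula \eqref{eq:der0} then follows from the chain rule $\dr\sqrt{\lambda_2}/\dr\eps = (2\sqrt{\lambda_2(B_2)})^{-1}\,\dr\lambda_2/\dr\eps = (2j_{1,1})^{-1}\,\dr\lambda_2/\dr\eps$.

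The only real subtlety is the branch choice at the crossing $\lambda_2=\lambda_3$; this is however unambiguous since the sign of $-A^2/2$ forces the $-\sqrt{p^2+q^2}$ root to correspond to the slower-growing (hence smaller) eigenvalue, which must be $\lambda_2$. The remaining computations are bookkeeping with standard Bessel identities and trigonometric integrals.
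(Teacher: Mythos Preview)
Your proof is correct and follows essentially the same route as the paper: apply Theorem~\ref{thm:t1} to the double eigenvalue $\lambda_2(B_2)=j_{1,1}^2$, use the eigenbasis $cJ_1(j_{1,1}r)\cos\theta$, $cJ_1(j_{1,1}r)\sin\theta$, compute the $2\times 2$ Hadamard matrix $M$ with the area-preserving condition, find its eigenvalues $\pm (j_{1,1}^2/\pi)\bigl|\int_0^{2\pi}F\er^{2\ir\theta}\bigr|$, and select the smaller one for $\lambda_2$. The only cosmetic differences are that the paper often integrates over $[0,\pi]$ (using the $\pi$-periodicity of $F$) where you use $[0,2\pi]$, and that the paper packages the matrix in terms of the complex number $a$ rather than the pair $(p,q)$; your bookkeeping is arguably cleaner.
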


\begin{proof}[Proof of Lemma \ref{lem:pert_ball}]
In our case, $\Omega_0=\Omega_{0F}$ is a disk of radius $1$,
$\lambda_2(\Omega_0)$ is doubly degenerate, so the matrix $M$ is of
dimension $2$, and we can choose the  orthonormal eigenfunctions 
$u_2(r, \theta)=N\, J_1(j_{1,1}  r) \cos \theta$, $u_3(r, \theta)= N\, J_1(j_{1,1} r)
\sin \theta$. The constant $N$ is introduced in order for the 
eigenfunctions $u_2$ and $u_3$ to have $L_2$--norm one in the unit disk. Using standard 
properties of Bessel functions (in particular \cite[formulas 11.45 and 9.1.30]{AbrSte}) one gets, 
$$
N=\sqrt{\frac{2}{\pi}} \frac{1}{|J_0(j_{1,1})|}.
$$
Using the lowering property of Bessel functions, 
$$
\frac{1}{z} \frac{\dr}{\dr z} \left(z \, J_1(z) \right) =  J_0,
$$
(\cite[formula 9.1.30]{AbrSte})
the value of $N$ just obtained, and the fact that $J_1(j_{1,1})=0$ and $J_0(j_{1,1})<0$, 
in the expresion for $u_2$ and $u_3$ we obtain,
$$
\frac{\partial u_2}{\partial n} = -j_{1,1}\sqrt{\frac{2}{\pi}} \cos \theta,
$$
and
$$
\frac{\partial u_3}{\partial n} = -j_{1,1}  \sqrt{\frac{2}{\pi}} \sin \theta,
$$
at the boundary of the disk (i.e., at $r=1$).

Taking these remarks into account, the elements of the matrix $M$ in
our case are given by
\[
m_{1,1} = \frac{2 j_{1,1} ^2}{\pi} \int_0^{\pi} F(\theta) \cos^2 \theta\, \dr \theta,
\]
\[
m_{1,2}=m_{2,1} = \frac{2 j_{1,1} ^2}{\pi} \int_0^{\pi} F(\theta) \cos\theta \sin \theta\, \dr \theta,
\]
and
\[
m_{2,2} = \frac{2 j_{1,1} ^2}{\pi} \int_0^{\pi} F(\theta) \sin^2 \theta\, \dr \theta\,.
\]
For area preserving deformations of the ball, i.e. for functions $F$ satisfying \eqref{eq:intF0}, we can write the matrix
$M$ in the simple form,
\[
M =
\begin{pmatrix} \re a & \im a \\
\im a & -\re a
\end{pmatrix}
\]
where
\[
a=\frac{j_{1,1}^2}{\pi}\int_0^{2\pi} \er^{2\ir\theta} F(\theta)\, \dr \theta\,.
\]
It is
simple to compute the two eigenvalues of $M$ in this case, and they
are given by
\[
\pm  \vert a \vert.
\]
Hence, using Theorem \ref{thm:t1}  we obtain  the (directional)
derivative of  the second eigenvalue of the perturbed
domain by using a smaller of these two eigenvalues:
\begin{equation}
\frac{d \lambda_2(\Omega_{\eps F})}{dt}\Bigm|_{t=0} = -\frac{j_{1,1} ^2}{\pi}
\left \vert \int_0^{2\pi} F(\theta) \er^{2\ir \theta}\,\dr\theta \right \vert\,.
\label{eq:der3}
\end{equation}
From (\ref{eq:der3}), taking into account that $\lambda_2=j_{1,1}^2 $, we
finally get (\ref{eq:der0}).
\end{proof}

Now, we will compute the derivative of $\kappa(\Omega_{\eps F})$ at
$\eps=0$, for area preserving deformations of the disk.
For brevity, we shall use the notation
$f_\eps(\xib):=\widehat{\chi_{\Omega_{\eps F}}}(\xib)$ for the Fourier transform of the characteristic function of
$\Omega_{\eps F}$ and
$\NC_\eps=\NC(\Omega_{\eps F})=\{\xib\in\Rbb^d:f_\eps(\xib)=0\}$ for its null variety.

We know that $\NC_0$ contains  a circle of radius $j_{1,1}$, and we
seek to characterize the elements of $\NC_\eps$. Pick an element
$\xib_0 \in {\cal N}_0$. For definiteness, we choose coordinates
in such a way that
\begin{equation}\label{eq:xi0dir}
\xib_0=(1,0) j_{1,1} ,
\end{equation}
and we write an element of ${\cal N}_{\eps}$ as
$$
\xib =\xib_0 + \eps\xib_1.
$$
It is precisely ${\xib}_1$ which we would like to determine by
requiring  $f_\eps(\xib)=0$ to hold up to first order in $\eps$. Using
polar coordinates, we write
$$
{\xib}_1 = (\cos \omega, \sin \omega) \rho_1.
$$
With the above notation, we have
\begin{equation}\label{eq:per6}
f_\eps(\xib) = \int_0^{2 \pi} \int_0^{1+\eps F(\theta)} \er^{\ir j_{1,1}  r
\cos \theta}\er^{\ir \eps \rho_1 r \cos (\theta- \omega)} \, r \, \dr r \, \dr\theta\,.
\end{equation}
In the sequel, we use the fact that
$f_{0}(j_{1,1} )=0$, i.e.,
\begin{equation}\label{eq:per7}
\int_0^{2\pi} \int_0^1 e^{\ir j_{1,1}  r \cos \theta} \, r\, \dr r \,
\dr\theta=0\,,
\end{equation}
and split the integral in the variable $r$ in \eqref{eq:per6} as an
integral from $r=0$ to $r=1$ plus an integral from $r=1$ to
$r=1+\eps F(\theta)$. After some algebraic computations we obtain
\begin{equation} \label{eq:per8}
f_\eps (\xib) = \eps N(\rho_1, \omega)+O(\eps^2),
\end{equation}
where
\begin{equation}\label{eq:per9}
\begin{split}
N(\rho_1,\omega) &= \ir \rho_1 \int_0^{2\pi} \left( \int_0^1 \er^{\ir j_{1,1}  r\cos
\theta} r^2 \cos (\theta-\omega) \, \dr r \right)\\
&+ \int_0^{2\pi}
F(\theta) \er^{\ir j_{1,1}  \cos \theta} \, \dr \theta\,.
\end{split}
\end{equation}
Using the fact that the perturbed domain is {\it balanced}, i.e.,
that $F(\theta)=F(\theta+\pi)$, we get
$$
\int_0^{2\pi} F(\theta) e^{\ir j_{1,1}  \cos \theta} \, \dr \theta=
\int_0^{2\pi} F(\theta) \cos(j_{1,1}  \cos \theta) \, \dr \theta.
$$
Since we also have $\int_0^{2\pi} \er^{\ir j_{1,1}  r \cos \theta} \sin
\theta \, \dr \theta=0$, we arrive at
\begin{equation}\label{eq:per10}
\begin{split}
N(\rho_1,\omega) &= \ir \rho_1 \int_0^{2\pi} \left( \int_0^1 \er^{\ir j_{1,1}  r\cos
\theta} \cos \theta \cos \omega \,r^2 \dr r \right)\\
&+ \int_0^{2\pi}
F(\theta) \cos(j_{1,1}  \cos \theta) \, \dr \theta\,.
\end{split}
\end{equation}
Integrating the first integral in \eqref{eq:per10}  by parts in $r$
gives
\begin{equation}\label{eq:per11}
\begin{split}
N(\rho_1,\omega) &= \frac{\rho_1}{j_{1,1} } \cos \omega\left( \int_0^{2\pi} \er^{\ir j_{1,1} \cos \theta} \, \dr \theta
- \int_0^1 \int_0^{2\pi} 2 r \er^{\ir j_{1,1}  r \cos \theta} \, \dr r \, \dr \theta \right)\\
&+ \int_0^{2\pi} F(\theta) \cos(j_{1,1} \cos \theta) \, \dr \theta.
\end{split}
\end{equation}
Now, we use the integral representation
$$
J_0(x) = \frac{1}{2\pi} \int_0^{2\pi} \er^{\ir x \cos \theta} \, \dr \theta
$$
to simplify the first two terms in (\ref{eq:per11}). We finally get
\begin{equation}\label{eq:per12}
N(\rho_1,\omega) = 2 \pi  \frac{\rho_1}{j_{1,1} } \cos \omega\, J_0(j_{1,1} ) +
\int_0^{2\pi} F(\theta) \cos(j_{1,1}  \cos \theta) \, \dr \theta.
\end{equation}
Here we have used the fact that $\int_0^1 r J_0(j_{1,1}  r) \, \dr r=
J_1(j_{1,1} )/j_{1,1} ^2=0$, since $j_{1,1}$ is a zero of $J_1$.
The vector $\xib_1=\rho_1(\cos \omega, \sin \omega)$ is determined by the
condition
$$
N(\rho_1,\omega)=0\,.
$$
Therefore, (\ref{eq:per12}) implies
\begin{equation}\label{eq:per13}
\rho_1 \cos \omega = - \frac{j_{1,1} }{2 \pi J_0(j_{1,1} )} \int_0^{2\pi} F(\theta)
\cos(j_{1,1}  \cos \theta) \, \dr \theta.
\end{equation}

In the case when $\xib_0$ is not given by \eqref{eq:xi0dir}, but by
\[
\xi_0=(\cos\alpha, \sin\alpha)j_{1,1}\,,
\]
we have
\begin{equation}\label{eq:per13a}
\rho_1 \cos \omega = - \frac{j_{1,1} }{2 \pi J_0(j_{1,1} )} \int_0^{2\pi} F(\theta+\alpha)
\cos(j_{1,1}  \cos \theta) \, \dr \theta.
\end{equation}

In order to compute $\kappa(\Omega_{\eps F})$ to first order in $\eps$ all
we have to compute is $\vert\xib_0 + \eps\xib_1\vert$ to first
order in $\eps$, which in turn is given by
$$
 j_{1,1}  + \eps\rho_1 \cos \omega + O(\eps^2)\,.
$$
Using \eqref{eq:per13a}, we obtain
\begin{equation}
\kappa(\Omega_{\eps F}) = \min_{\alpha}\left[j_{1,1}  \left( 1 - \frac{\eps}{2\pi
J_0(j_{1,1} )} \int_0^{2\pi} F(\theta+\alpha) \cos(j_{1,1}  \cos \theta) \, \dr\theta
\right) + O(\eps^2)\right]. \label{eq:per14}
\end{equation}
From this result, we immediately have,
\begin{lem}\label{lem:kappa_deriv} Let $F$ be a $C^2$ function on a unit circle satisfying periodicity condition
\eqref{eq:F_bal}. Then
\begin{equation} \label{eq:per15}
\left.\frac{\dr \kappa(\Omega_{\eps F})}{\dr\eps}\right|_{\eps=0} = \min_{\alpha} \left[-
\frac{j_{1,1} }{2\pi J_0(j_{1,1} )} \int_0^{2\pi} F(\theta+\alpha) \cos(j_{1,1}  \cos
\theta) \, \dr\theta \right].
\end{equation}
\end{lem}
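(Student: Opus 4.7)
The plan is to make rigorous the expansion carried out in \eqref{eq:per6}--\eqref{eq:per14}. The overall strategy has three steps: (i) for small $\eps$, parametrize all zeros of $f_\eps$ near the unperturbed critical circle $|\xib|=j_{1,1}$ as a continuous family bifurcating from $\NC_0$; (ii) compute the first-order radial correction along each branch; (iii) show that the infimum over branches is attained and equals $\kappa(\Omega_{\eps F})$, and then pass to the one-sided derivative.

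First I would invoke the implicit function theorem. Since $f_0(\xib)=2\pi J_1(|\xib|)/|\xib|$ by \eqref{eq:kappa_for_ball} and $J_1'(j_{1,1})=J_0(j_{1,1})\neq 0$, every point $\xib_0(\alpha):=j_{1,1}(\cos\alpha,\sin\alpha)$ is a simple radial zero of $f_0$, and the IFT applied to $f_\eps$ (which depends smoothly on $\eps$ and on $\xib$ via \eqref{eq:per6}, using the $C^2$ regularity of $F$) produces a $C^1$ family $\xib(\eps,\alpha)=\xib_0(\alpha)+\eps\xib_1(\alpha)+O(\eps^2)$ of zeros of $f_\eps$, with remainder uniform in $\alpha\in[0,2\pi)$ by compactness.

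Second, the expansion \eqref{eq:per6}--\eqref{eq:per12}, performed in the rotated frame (equivalently, with $F(\theta)$ replaced by $F(\theta+\alpha)$), shows that the vanishing condition $N(\rho_1,\omega)=0$ determines only the radial component $\rho_1\cos\omega$ of $\xib_1(\alpha)$, via \eqref{eq:per13a}; the tangential component is free and affects $|\xib_0(\alpha)+\eps\xib_1(\alpha)|$ only at order $\eps^2$. Consequently
\[
|\xib(\eps,\alpha)|=j_{1,1}-\frac{\eps\,j_{1,1}}{2\pi J_0(j_{1,1})}\int_0^{2\pi}F(\theta+\alpha)\cos(j_{1,1}\cos\theta)\,\dr\theta+O(\eps^2),
\]
uniformly in $\alpha$.

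Third, $f_0$ does not vanish in $\{0<|\xib|<j_{1,1}\}$, and continuity of $\eps\mapsto f_\eps$ uniformly on compact subsets of $\Rbb^2$ implies that for any $\delta>0$ there is $\eps_0>0$ such that $f_\eps$ is bounded away from zero on $\{|\xib|\le j_{1,1}-\delta\}$ for $\eps<\eps_0$; hence no zero of $f_\eps$ lies deeper than $O(\eps)$ inside the critical circle. Since the IFT branches exhaust all zeros of $f_\eps$ in an $O(\eps)$-neighbourhood of that circle, we conclude $\kappa(\Omega_{\eps F})=\min_\alpha|\xib(\eps,\alpha)|$ for sufficiently small $\eps>0$. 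The uniform-in-$\alpha$ $O(\eps^2)$ remainder then allows the interchange of the minimum with the one-sided limit $\eps\to 0^+$, yielding \eqref{eq:per15}. The main technical obstacle is precisely this last interchange: it requires a uniform-in-$\alpha$ bound on the second-order remainder in the expansion of $f_\eps$, which is secured by the $C^2$ assumption on $F$ (differentiating twice in $\eps$ under the integral in \eqref{eq:per6}) together with compactness of the parameter interval $[0,2\pi)$.
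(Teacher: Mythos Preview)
Your proposal is correct and follows the paper's approach exactly: the paper simply carries out the formal expansion \eqref{eq:per6}--\eqref{eq:per14} and then declares the lemma an immediate consequence, whereas you supply the missing analytic scaffolding (the implicit-function-theorem parametrization of the nearby zeros, the exclusion of zeros away from the critical circle, and the uniform-in-$\alpha$ remainder needed to exchange $\min_\alpha$ with the limit $\eps\to 0^+$). There is nothing to add; your write-up is a faithful rigorization of the paper's computation.
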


\begin{remark}
Note that Lemma \ref{lem:kappa_deriv} does not assume the area preservation condition \eqref{eq:intF0}.
\end{remark}

In order to finish the proof of Theorem \ref{thm:asympt_est} for perturbations
around the circle, we need to prove that the right-hand side of
(\ref{eq:per15}) is always less or equal than the right-hand side of
(\ref{eq:der0}), i.e.,
\begin{equation}\label{eq:p1}
\min_{\alpha} \int_0^{\pi} F(\theta+\alpha) \cos (j_{1,1}  \cos \theta)
\, d\theta \le - A \left\vert \int_0^{\pi} F(\theta) e^{2i \theta}
\, \dr\theta\right\vert,
\end{equation}
where $A=-J_0(j_{1,1} ) \approx 0.408 \dots$, assuming additionally that $F$ satisfies the area preservation
condition  \eqref{eq:intF0}.

For future reference we denote the left-hand side and the right-hand side of \eqref{eq:p1} by $L_F$ and $R_F$, respectively.


\begin{remark}
Note also that if the average of $F$ is $0$ and, additionally, $F$
has zero two--modes, i.e., $\int_0^{\pi} F(\theta) \er^{2\ir\theta} \,
\dr \theta =0$, then (\ref{eq:p1}) is valid. In fact, in this case, the
right-hand side $R_F$ vanishes whereas the left-hand side is given
by
$$
L_F=\min_{\alpha} \int_0^{\pi} F(\theta+\alpha) \cos (j_{1,1}  \cos \theta)
\, \dr \theta,
$$
so we have
\begin{equation}\label{eq:p2}
L_F\le\int_0^{\pi} F(\theta+\alpha) \cos (j_{1,1}  \cos \theta) \, \dr \theta,
\end{equation}
for every $\alpha$, and averaging over $\alpha$ we get,
\[
\begin{split}
L_F & \le \frac{1}{\pi} \int_0^{\pi} \left(\int_0^{\pi}
F(\theta+\alpha) \cos (j_{1,1}  \cos \theta) \, \dr \theta \right) \, \dr
\alpha\,,\\
& = \frac{1}{\pi} \int_0^{\pi} \dr \theta \cos(j_{1,1}  \cos \theta)
\int_0^{\pi} F(\theta+ \alpha) \, \dr \alpha =0\,,
\end{split}
\]
and we are done.
\end{remark}

Before we conclude, we need to analize the case of equality in \eqref{eq:former2.10}, i.e., we need to show that equality is only attained in  \eqref{eq:former2.10} if the domain is a ball, or, in other words, if $F(\theta) \equiv 0$. In order to have equality in  \eqref{eq:former2.10}, we need equality in  \eqref{eq:p2}, which in turn implies, 
\begin{equation}
\int_0^{\pi} F(\theta+ \alpha) \cos(j_{1,1} \cos \theta) \, \dr \theta=0,
\label{eq:rem1}
\end{equation}
for all $\alpha$. Since $F(\theta)$ has zero average, and moreover $F(\theta+\pi)=F(\theta)$ (which is required so that the perturbed domain is balanced), the Fourier serries of $F(\theta)$ can be written as
\begin{equation}
F(\theta) = \sum_{k=-\infty}^{\infty} c_k \er^{\ir k\theta} = \sum_{m \neq 0} c_{2m} e^{2\ir m\theta},
\label{eq:rem2}
\end{equation}
with $c_0=0$ (because $F$ has zero average) and $c_{2k+1}=0$, for all $k$ (because the domain is balanced). 
Replacing (\ref{eq:rem2}) in (\ref{eq:rem1}) we get,
\begin{equation}
\sum_{m \neq 0} c_{2m} \int_0^{\pi} \er^{2\ir m(\theta+\alpha)} \cos(j_{1,1} \cos \theta) \, \dr\theta=0,
\label{eq:rem3}
\end{equation}
Using the integral representation for $J_n(z)$, i.e., 
$$
J_n(z)=\frac{1}{\pi} \int_0^{\pi} \cos(z \sin \theta - n \theta) \, d\theta, 
$$
after some computation we can write (\ref{eq:rem3}) as 
\begin{equation}
\sum_{m \neq 0} c_{2m} (-1)^m J_{2m} (j_{1,1}) \er^{2\ir m\alpha}=0,
\label{eq:rem4}
\end{equation}
all $0 \le \alpha \le 2 \pi$. Since the $\exp(2\ir m\alpha)$ form an orthogonal set of functions, 
we finally get,
$$
c_{2m} (-1)^m J_{2m} (j_{1,1})=0,
$$
all $m$. Since $j_{m,1} > j_{1,1}$ for all $m>1$, $J_{2m}(j_{1,1}) \neq 0$, thus, $c_{2m}=0$, all $m$, hence, 
$F(\theta)\equiv 0$ as it was to be shown. 
A similar argument can be used to show that equality is attained only for the ball in the general case.

Before we go into the proof of (\ref{eq:p1}) for a general $F$
satisfying both the periodicity condition (giving a balanced domain) and the zero average condition (area preserving domain perturbation),
we need the following result.

\begin{lemma}\label{lem:Frot}
Assuming $F$ averages up to zero, it is always possible to rotate
$F$ in such a way that the following two
conditions are fulfilled simultaneously:
\begin{equation}\label{eq:Frot0}
\int_0^{\pi} F(\theta+\phi) \sin (2 \theta)\, \dr\theta= 0\,,
\end{equation}
and
\begin{equation}\label{eq:Frot2}
\int_0^{\pi} F(\theta+\phi) \, \cos (2 \theta) \, \dr\theta \ge 0\,.
\end{equation}
Here $F(\theta+ \phi)$ is $F$ rotated by an angle
$\phi$.
\end{lemma}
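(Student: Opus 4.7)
The approach is to view this as a statement about the second Fourier mode of $F$: rotation by $\phi$ multiplies each Fourier coefficient $c_k$ by $e^{ik\phi}$, so for $k=2$ and $\phi$ ranging over one period $[0,\pi)$ of $F$, the factor $e^{2i\phi}$ sweeps out the entire unit circle. Consequently, the second Fourier coefficient of the rotated function can be aligned with any prescribed direction in $\Cbb$, and in particular with the non-negative real axis — which is exactly what \eqref{eq:Frot0} and \eqref{eq:Frot2} demand.

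To carry this out, I would set
\[
A(\phi) := \int_0^{\pi} F(\theta+\phi)\cos(2\theta)\,\dr\theta,
\qquad
B(\phi) := \int_0^{\pi} F(\theta+\phi)\sin(2\theta)\,\dr\theta,
\]
substitute $\psi = \theta+\phi$, and exploit the fact that both $F$ (by \eqref{eq:F_bal}) and the trigonometric weights are $\pi$-periodic, so that the shifted interval of integration can be returned to $[0,\pi]$. The angle-addition formulae then produce
\[
A(\phi) = A(0)\cos(2\phi) + B(0)\sin(2\phi),
\qquad
B(\phi) = B(0)\cos(2\phi) - A(0)\sin(2\phi),
\]
which combine into the single complex identity
\[
A(\phi) - \ir B(\phi) = \bigl(A(0) - \ir B(0)\bigr)\,\er^{2\ir\phi}.
\]

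From here the conclusion is immediate. If $A(0) - \ir B(0) = 0$, then $A(\phi) \equiv B(\phi) \equiv 0$ and both conditions hold trivially. Otherwise write $A(0) - \ir B(0) = r\,\er^{\ir\psi_0}$ with $r > 0$, and choose $\phi \equiv -\psi_0/2 \pmod{\pi}$; this forces $A(\phi) - \ir B(\phi) = r$, so simultaneously $B(\phi) = 0$ and $A(\phi) = r \ge 0$.

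I do not anticipate any real obstacle: the argument is essentially the observation that $SO(2)$ acts transitively on directions in its degree-$2$ representation. I would also note in passing that the zero-average hypothesis on $F$ is not actually used in this particular lemma, since the weights $\cos(2\theta)$ and $\sin(2\theta)$ are automatically orthogonal to constants; the hypothesis is simply inherited from the standing assumptions of Theorem~\ref{thm:asympt_est} under which the lemma will be applied.
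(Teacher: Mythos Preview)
Your proof is correct. It differs from the paper's argument, which is more indirect: the paper defines $T(\phi)=\int_0^\pi F(\theta+\phi)\sin(2\theta)\,\dr\theta$, uses the zero-average hypothesis to conclude $\int_0^\pi T(\phi)\,\dr\phi=0$ and hence (by the mean-value property) that $T(\phi_1)=0$ for some $\phi_1$, and then invokes the discrete symmetry $T(\phi_1+\pi/2)=-T(\phi_1)$, $Q(\phi_1+\pi/2)=-Q(\phi_1)$ to adjust the sign of $Q$. Your approach instead computes the explicit rotation law $A(\phi)-\ir B(\phi)=(A(0)-\ir B(0))\er^{2\ir\phi}$ and reads off the desired $\phi$ directly. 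This is cleaner and, as you correctly observe, shows that the zero-average hypothesis is not actually needed for this lemma (it enters the paper's argument only to guarantee a zero of $T$, which your explicit formula provides for free). The paper's symmetry step is in fact a special case of your formula evaluated at $\phi$ and $\phi+\pi/2$.
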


\begin{proof}[Proof of Lemma \ref{lem:Frot}]
Consider the function
\[
T(\phi):=\int_0^{\pi} F(\theta+\phi) \, \sin (2 \theta) \, \dr \theta\,.
\]
Since $F$ averages to zero, $\int_0^{\pi} T(\phi) \, \dr \phi=0$, so
there exists a point $\phi_1 \in [0,\pi]$, such that
$T(\phi_1)=0$, and \eqref{eq:Frot0} holds.

Now, consider
\[
Q(\phi):= \int_0^{\pi} F(\theta+\phi) \cos (2 \theta) \, \dr\theta\,.
\]
Clearly, $T(\phi_1) = T(\phi_1 + \pi/2)=0$. On the other
hand, $Q(\phi_1)=-Q(\phi_1+\pi/2)$. So, either $Q(\phi_1) \ge 0$, or
$Q(\phi_1+\pi/2)\ge 0$, and we have obtained \eqref{eq:Frot2} by choosing $\phi=\phi_1$ or
$\phi=\phi_1+\pi/2$.
\end{proof}

After proving this Lemma we are ready to prove \eqref{eq:p1}.
Consider $F$ with zero average and such that $\int_0^{\pi}
F(\theta) \cos (2 \theta)\, \dr \theta \ge0$ and $\int_0^{\pi} F(\theta) \sin
(2\theta) \, \dr \theta =0$. In this case, the right-hand side of
(\ref{eq:p1}) is given by
\begin{equation}
R_F=-A \int_0^{\pi} F(\theta) \cos (2 \theta)\, \dr\theta.
\end{equation}

On the other hand, the left-hand side $L_F$ satisfies \eqref{eq:p2} for each $\alpha$.
Now, multiply (\ref{eq:p2}) by
\[
\cos^2 \alpha/\int_0^{\pi} \cos^2
\alpha \, \dr \alpha \equiv (2/\pi) \cos^2 \alpha
\]
and integrate in $\alpha$ from $0$ to $\pi$ (notice that $(2/\pi) \cos^2 \alpha \ge
0$). We thus have,
\begin{equation} \label{eq:p3}
L_F \le \frac{2}{\pi} \int_0^{\pi} \left( \int_0^{\pi} F(\theta+ \alpha)
\cos( j_{1,1}  \cos \theta) \, \dr\theta \right) \cos^2 \alpha \, \dr
\alpha.
\end{equation}
Now, split $\cos^2 \alpha = (\er^{2\ir \alpha} + \er^{-2 \ir \alpha} +2)/4$
in (\ref{eq:p3}). If we do the integral over $\alpha$ first, using
the fact that the average of $F$ vanishes, we get
\[
\begin{split}
\int_0^{\pi} F(\theta+\alpha) \, \cos^2 \alpha \, \dr \alpha  &=
\frac{1}{4} \int_0^{\pi} \er^{2\ir\alpha} F(\theta+\alpha) \, \dr \alpha
+\frac{1}{4} \int_0^{\pi} \er^{-2\ir\alpha} F(\theta+\alpha) \, \dr\alpha \\
&= \frac{1}{4} \er^{-2\ir \theta} \int_0^{\pi} \er^{2\ir\beta} F(\beta) \, \dr
\beta +\frac{1}{4} \er^{2 \ir \theta}\int_0^{\pi} \er^{-2\ir\beta} F(\beta)
\, \dr \beta.
\end{split}
\]
By Lemma \ref{lem:Frot}, and the choice of orientation of $F$, we have
\[
\int_0^{\pi} \er^{2 \ir \beta} F(\beta) \, \dr \beta = \int_0^{\pi} \er^{-2
\ir \beta} F(\beta) \, \dr \beta =\int_0^{\pi} \cos(2 \beta) F(\beta) \,
\dr \beta =: P\,,
\]
and so
\[
\int_0^{\pi} F(\theta+\alpha) \cos^2 \alpha \, \dr \alpha =
\frac{1}{2} \cos (2 \theta) P\,.
\]
Then,
\begin{equation} \label{eq:p7}
L_F \le \frac{2}{\pi} \frac{1}{2} \int_0^{\pi} \cos (2 \theta) \cos
(j_{1,1}  \cos \theta) \, \dr\theta \, P = P J_0(j_{1,1} )\,.
\end{equation}
Here we have used the fact that $\int_0^{\pi} \cos (2 \theta) \cos (j_{1,1}  \cos
\theta) \, \dr\theta = \pi J_0(j_{1,1} )$ (this follows by taking real part in \cite[formula 9.1.21]{AbrSte}, with $n=2$, and the fact 
that $J_2(j_{1,1})=-J_0(j_{1,1})$ \cite[9.1.27]{AbrSte}) . Hence,
\[
L \le J_0(j_{1,1} ) P = - A \, P=R_F.
\]

This proves \eqref{eq:p1} and  therefore Theorem \ref{thm:asympt_est}.\qed


\section{Non-convex domains: counterexamples}\label{sec:counterex}

We start by proving Theorem~\ref{thm:counterexample}.

First, we introduce some notation. For a domain $\Omega$ we put
\[
\zeta(r)=\zeta_{\Omega}(r):=\frac{\eta(r)}{2\pi r}.
\]
Then the function $\zeta$ satisfies the following properties:
$\zeta(r)\le 1$; if $\Omega$ is star-shaped, $\zeta$ is non-increasing,
$\supp\zeta=\supp\eta =[0,D(\Omega)/2]$, and $\vol_2(\Omega)=2\pi\int_0^{D/2}r\zeta(r)\dr r$.
The strategy of the proof is the following: first, we construct a function $\tilde\zeta$ which is non-increasing,
$\tilde\zeta(r)=1$ for $0\le r\le 1-\tilde\delta$, $\supp\tilde\zeta \subset [0,1+\tilde\delta]$,
$\int_0^{1+\tilde\delta}r\tilde\zeta(r)\dr r=\pi$ and, finally,
\begin{equation}\label{l51}
\int_0^{\infty}r\tilde\zeta(r)J_0(\gamma r)\dr r>0
\end{equation}
for all $\gamma\le j_{1,1}$. Then we construct a domain $\Omega$ such that $\tilde\zeta=\zeta_{\Omega}$ and
$
\int_{\Omega}\cos(\gamma x_{\eb})d\xb
$
is close to the left-hand side of \eqref{l51} for all $\eb$, $|\eb|=1$ and all $\gamma\le j_{1,1}$.
This $\Omega$ will be a required domain.

Let $\zeta_0(r)=\begin{cases} 1,&\ 0<r\le 1\\0,&\ r>1\end{cases}$ be the $\zeta$-function for the ball
of radius one. Suppose that $\tilde\delta$ is fixed. Let $\delta$ be a small positive parameter, and put
\[
\xi_{\delta}(r)=\begin{cases}
-1/2,\quad&1-\delta<r<1\,,\\
a,\quad &1<r<1+\tilde\delta\,,\\
0,\quad &\text{otherwise}.
\end{cases}
\]
Here, we choose $a=a(\delta)$ from the condition
\begin{equation}\label{l0}
\int_0^{\infty}r\xi_{\delta}(r)\dr r=0,
\end{equation}
which is equivalent to
\begin{equation}\label{ln1}
a=\frac{\int_{1-\delta}^1r\dr r}{2\int_1^{1+\tilde\delta}r\dr r}\,.
\end{equation}
Obviously,
$a\to 0$ as $\delta\to 0$. Note also that for small $\delta$ we have
\begin{equation}\label{l1}
\int_0^{\infty}r\xi_{\delta}(r)J_0(j_{1,1}r)\dr r>0.
\end{equation}
Indeed, we obviously have
\[
\frac{d \int_0^{1}r\xi_{\delta}(r)J_0(j_{1,1}r)\dr r}{d\delta}=
-\frac{1}{2}\frac{d \int_{1-\delta}^{1}rJ_0(j_{1,1}r)\dr r}{d\delta}
=-\frac{J_0(j_{1,1})}{2},
\]
so
\begin{equation}\label{l2}
\int_0^{1}r\xi_{\delta}(r)J_0(j_{1,1}r)\dr r\sim \frac{-J_0(j_{1,1})\delta}{2}.
\end{equation}
Similarly, using \eqref{ln1} we obtain
\begin{equation}
\begin{split}
b:&=\frac{d \int_1^{\infty}r\xi_{\delta}(r)J_0(j_{1,1}r)\dr r}{d\delta}=
\frac{d (a\int_{1}^{1+\tilde\delta}rJ_0(j_{1,1}r)\dr r)}{d\delta}\\
&=\frac{\int_{1}^{1+\tilde\delta}rJ_0(j_{1,1}r)\dr r}{2\int_1^{1+\tilde\delta}r\dr r}
\frac{d \int_{1-\delta}^1r\dr r}{d\delta}
=\frac{\int_{1}^{1+\tilde\delta}rJ_0(j_{1,1}r)\dr r}{2\int_1^{1+\tilde\delta}r\dr r}\,,
\end{split}
\end{equation}
so
\begin{equation}\label{l3}
\int_1^{\infty}r\xi_{\delta}(r)J_0(j_{1,1}r)\dr r\sim b\delta
\end{equation}
as $\delta\to 0$. 
Since
$j_{1,1}$ is a local minimum of $J_0$, we have $b>\frac{J_0(j_{1,1})}{2}$. Now formulas \eqref{l2} and \eqref{l3} imply \eqref{l1}.
We now fix a small $\delta<\tilde\delta$ for which \eqref{l1} holds and put $\tilde\zeta(r)=\zeta_0(r)+\xi_{\delta}(r)$.
Then, since
\begin{equation}\label{l4}
\int_0^{\infty}r\zeta_0(r)J_0(j_{1,1}r)\dr r=0,
\end{equation}
we have
\begin{equation}\label{l11}
\int_0^{\infty}r\tilde\zeta(r)J_0(j_{1,1}r)\dr r>0.
\end{equation}
It is easy to show that in fact for all positive $\gamma\le j_{1,1}$ we have
\begin{equation}\label{l5}
\int_0^{\infty}r\tilde\zeta(r)J_0(\gamma r)\dr r>0.
\end{equation}
Indeed, the function
\[
l(\gamma):=\int_0^{\infty}r\tilde\zeta(r)J_0(\gamma r)\dr r
\]
decreases for $\gamma<j_{1,1}$, since its derivative
\[
l'(\gamma)=-\int_0^{\infty}r^2\tilde\zeta(r)J_1(\gamma r)\dr r
\]
is negative as $J_1(r)$ is positive for $r\in [0,j_{1,1}]$.
Note also that \eqref{l0} implies
\begin{equation}\label{l6}
\int_0^{\infty}r\tilde\zeta(r)\dr r=1.
\end{equation}

Now let us construct a sequence of domains $\Omega_n$ which satisfy the following properties:
\begin{enumerate}
\item[(i)] the domain $\Omega_n$ is invariant under the rotation on $\frac{2\pi}{n}$ around the origin;
\item[(ii)] in the sector $-\frac{\pi}{n}\le\phi\le\frac{\pi}{n}$ in polar coordinates $(r,\phi)$
the domain $\Omega_n$ is given by $\{(r,\phi), 
\,|\phi|<\frac{\pi\tilde\zeta(r)}{n}\}$.
\end{enumerate}
Thus, for large $n$ the domain $\Omega_n$ has many thin spikes, see Figure \ref{fig:counterexample} for the picture of such a domain.

\begin{figure}[hbt!]
\begin{center}
\fbox{\includegraphics[width=0.6\textwidth]{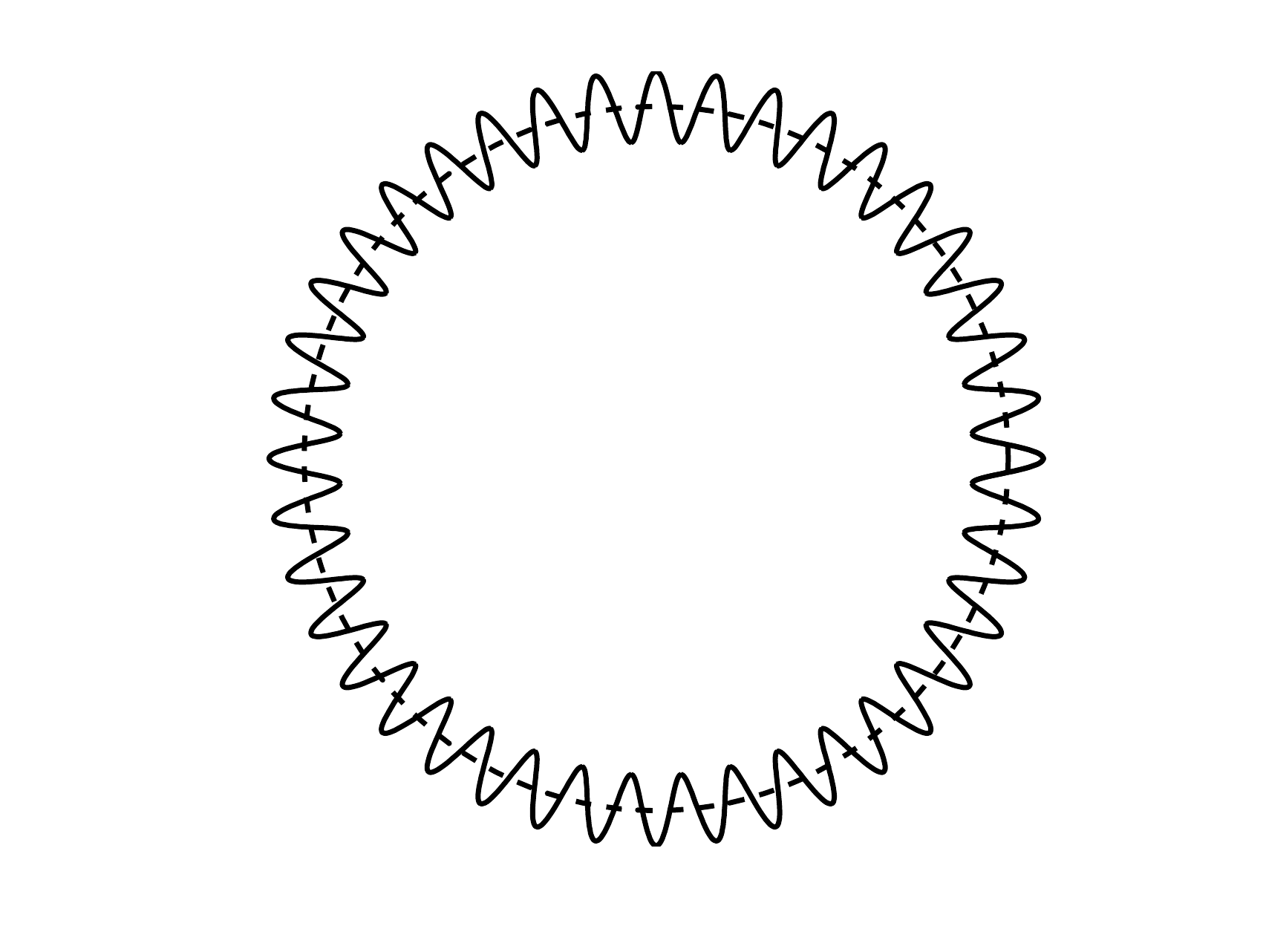}}
\end{center}
\caption{Domain $\Omega_n$.}\label{fig:counterexample}
\end{figure}

It is obvious that these properties determine the domain $\Omega_n$ uniquely and, moreover,
that 
$\zeta_{\Omega_n}=\tilde\zeta$ for all $n$.
Note also that for
all positive $\gamma\le j_{1,1}$ and all unit vectors $\eb$ we have
\begin{equation}\label{l7}
\int_{\Omega_n}\cos(\gamma x_{\eb})\dr\xb\to\int_0^{\infty}r\tilde\zeta(r)J_0(\gamma r)\dr r
\end{equation}
as $n\to\infty$ uniformly over $\gamma$ and $\eb$. Therefore, \eqref{l5} implies that for sufficiently large $n$
\begin{equation}\label{l8}
\int_{\Omega_n}\cos(\gamma x_{\eb})\,\dr\xb>0
\end{equation}
for all positive $\gamma\le j_{1,1}$ and all unit vectors $\eb$. Thus, for this domain $\Omega_n$ we have
$\kappa(\Omega_n)>j_{1,1}$, finishing the proof of Theorem~\ref{thm:counterexample}.

It remains to prove Theorem \ref{Nazarov} and Corollary \ref{cor:Nazarov}. Suppose that we have proved Theorem \ref{Nazarov},
and thus constructed a sequence $I_n$ of one-dimensional balanced domains for which
$\kappa(I_n)\vol_1(I_n)\to\infty$ as $n\to\infty$. Consider
\begin{equation}
A_n:=(I_n)^2=\{\xb=(x_1,x_2)\,,x_1,x_2\in I_n\}.
\end{equation}
Then $\vol_2(A_n)=(\vol_1(I_n))^2$, and  $\kappa(A_n)=\kappa(I_n)$, and so $\kappa(A_n)\sqrt{\vol_2(A_n)}\to\infty$ as $n\to\infty$. Now it remains to connect
the disjoint rectangles in $A_n$ by narrow corridors to construct
connected domains $ \tilde{A}_n$ with $\kappa(\tilde A_n)\sqrt{\vol_2(\tilde A_n)}\to\infty$ as $n\to\infty$, proving Corollary \ref{cor:Nazarov}.

Let us prove Theorem \ref{Nazarov}. We formulate the following
\begin{lem}\label{L2}
For each positive $C$ there exist a natural number $n$ and real numbers $w_1,\dots,w_n$ such that $w_1\ge 1$, $w_{j+1}\ge w_j+1$,
and the function $f(\xi):=\sum_{j=1}^n\cos(w_j\xi)$ is positive for $\xi\in [-C/n,C/n]$.
\end{lem}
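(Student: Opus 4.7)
The plan is to construct $f$ by unfolding a non-negative trigonometric polynomial with integer frequencies. Given $C>0$, I would fix a large integer $k$ (any $k>C/\pi$ will do) and a large integer $N$, and begin with
\[
T_k(\eta):=\Bigl(\sum_{j=1}^N e^{ij\eta}\Bigr)^k,
\]
so that $|T_k(\eta)|^2=|\sin(N\eta/2)/\sin(\eta/2)|^{2k}$ is non-negative and strictly positive on $(-2\pi/N,2\pi/N)$. As a function of $\eta$, it equals a sum of exactly $n_0:=N^{2k}$ cosines (with repetition) at integer frequencies lying in $\{-(N-1)k,\dots,(N-1)k\}$.

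Next, convert the representation $|T_k|^2=\sum_s a_s\cos(s\eta)$, where $a_s$ is the integer multiplicity of the frequency $s$, into an equal-weight sum of $n_0$ cosines at \emph{distinct} real frequencies. For each $s$, I would replace the $a_s$ repeated copies by the perturbed frequencies $s+\epsilon(i-(a_s+1)/2)$ for $i=1,\dots,a_s$, where $\epsilon>0$ is small. When $\epsilon\cdot a_{\max}<1$ (with $a_{\max}$ the largest multiplicity, of order $N^{2k-1}/k$ and occurring at $s=0$ by a direct count of representations), the groups around neighbouring integer frequencies do not overlap, producing all $n_0$ distinct frequencies with minimum pairwise gap at least $\epsilon$. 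The first-order perturbations cancel by the symmetric placement, and a second-order Taylor expansion bounds the difference between the perturbed sum and $|T_k|^2$ by $O(\epsilon^2\eta^2\sum_s a_s^3)$ uniformly in $\eta$, so for $\epsilon$ sufficiently small the positivity is preserved on the subinterval $|\eta|\le\pi/N$.

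After a final rescaling of all frequencies by $1/\epsilon$, the minimum pairwise gap becomes at least one, and the positivity interval in the new variable $\xi=\epsilon\eta$ is $|\xi|\le\pi\epsilon/N$. Setting $n:=n_0$, the required bound $\pi\epsilon/N\ge C/n$ reduces to $\epsilon\ge C/(\pi N^{2k-1})$, which is compatible with the no-collision bound $\epsilon\le 1/a_{\max}\approx k/N^{2k-1}$ precisely when $k\ge C/\pi$, which was arranged at the start. The main obstacle will be making the unfolding perturbation estimate fully rigorous: one has to show quantitatively that the second-order error, of size at most $\epsilon^2\eta^2\sum_s a_s^3$, stays strictly below the explicit lower bound $|T_k(\eta)|^2\ge|T_k(\pi/N)|^2\approx(2N/\pi)^{2k}$, valid on $|\eta|\le\pi/N$. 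A comparison of powers of $N$ indicates that the error is of order $N^{2k-2}$ while the lower bound is of order $N^{2k}$, so the perturbation is negligible for $N$ sufficiently large, completing the plan.
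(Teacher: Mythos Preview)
Your approach is genuinely different from the paper's. The paper (following Nazarov) uses a probabilistic argument: it starts from the deterministic kernel $G(x)=\sum_{k\in\Z}(1-|k|/n)_+^2 e^{ikx}$, shows via Poisson summation that $G(x)\ge cn$ on $|x|\le C/n$, and then selects each frequency $k\in\{1,\dots,n\}$ independently with probability $(1-k/n)^2$; a large-deviation estimate plus a union bound over a fine mesh shows that with positive probability the resulting random cosine sum stays within $n^{3/4}$ of $(G-1)/2$ and hence remains positive. Your route is entirely deterministic: you take a high power of the Fej\'er kernel and ``unfold'' each repeated integer frequency into a symmetric cluster of $\epsilon$-separated simple frequencies. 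Both ideas are natural; yours is fully constructive, while the paper's avoids all multiplicity bookkeeping at the cost of a non-explicit existence argument.

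Two points in your sketch need attention. First, your frequency list $\{s+\epsilon(i-(a_s+1)/2)\}$ is symmetric about the origin (since $a_s=a_{-s}$) and contains $0$ when $a_0$ is odd, so after rescaling by $1/\epsilon$ you do not get $w_1\ge 1$. The clean fix is to work from the start with $(|T_k|^2-a_0)/2=\sum_{s>0}a_s\cos(s\eta)$: this is still $\gtrsim N^{2k}$ on $|\eta|\le\pi/N$ because $a_0=O(N^{2k-1})$, it involves only strictly positive integer frequencies, and after your perturbation and rescaling the smallest frequency becomes $1/\epsilon-(a_1-1)/2\ge 1$ under the no-collision bound $\epsilon<1/a_{\max}$. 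Second, the asymptotic $a_{\max}\sim N^{2k-1}/k$ is not correct: writing $a_0=(2\pi)^{-1}\int_{-\pi}^{\pi}|D_N(\eta)|^{2k}\,\dr\eta$ and applying Laplace's method (or the local CLT) gives $a_0\sim N^{2k-1}\sqrt{3/(\pi k)}$, so the decay in $k$ is only $1/\sqrt{k}$. Hence the compatibility window $C/(\pi N^{2k-1})\le\epsilon\le 1/a_{\max}$ actually requires $k\gtrsim C^2$ rather than $k\ge C/\pi$. Since $k$ is a free parameter this does not damage the strategy, but the threshold should be corrected.
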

\begin{proof}
The proof is due to F. Nazarov \cite{Naz}. Put, for real $t$, $g(t):=(1-|t|)^{2}_+$. Then the Fourier transform $\widehat{g}(s)$ of $g$ is positive for
real $s$. Denote, for real $x$,
\begin{equation}
G(x):=1+2\sum_{k=1}^n\left(1-\frac{k}{n}\right)^2\cos (kx)
=\sum_{k\in\Z}g\left(\frac{k}{n}\right)\er^{\ir kx}.
\end{equation}
Then the Poisson summation formula implies that
\begin{equation}
G(x)= n\sum_{m\in\Z}\widehat{g}(n(x+2\pi m))\ge n \widehat{g}(nx),
\end{equation}
and so $G(x)\ge cn$ whenever $|x|\le C/n$. Now put
\begin{equation}
F(x):=\sum_{k=1}^n a_k^n\cos (kx),
\end{equation}
where $a_k^n$ is a collection of independent random variables such that $a_k^n=1$ with probability $\left(1-\frac{k}{n}\right)^2$; otherwise
$a_k^n=0$. Then the standard probabilistic arguments based on the large deviation principle imply that
for each fixed point $x$ the probability of the event
\begin{equation}\label{L1}
|F(x)-(G(x)-1)/2|\ge n^{3/4}
\end{equation}
is $O(e^{-n^{1/4}})$. In particular, putting $x=0$ in \eqref{L1}, we see that the number of coefficients $a^n_k$ which are equal to one, is at least $n/10$ with probability $1-O(n^3e^{-n^{1/4}})$.
Put $x_j:=\frac{j}{n^3}$, $j=0,\dots,n^3$. Then the probability of the
event that for all $j=0,\dots,n^3$ we have
\begin{equation}\label{L1b}
|F(x_j)-(G(x_j)-1)/2|\le n^{3/4}
\end{equation}
is at least $1-O(n^3e^{-n^{1/4}})$ and thus is positive for sufficiently large $n$. Since the derivative
of both $F$ and $G$ is $O(n^2)$, this means that the probability that \eqref{L1} is satisfied for all
$x\in [0,1]$ is positive when $n$ is large. Therefore, for each large $n$ there is at least one $F$ such that \eqref{L1} is satisfied for all $x\in [0,1]$. Thus chosen $F$ satisfies $F(x)\ge cn/2$ for $|x|\le C/n$ .
\end{proof}

To finish the proof of Theorem \ref{Nazarov}, we now take, for a given $C>0$, the numbers $n$  and  $w_j$, $j=1,\dots,n$,  from Lemma \ref{L2}, and define $I_n:=\{x\in\Rbb,\, ||x|-w_j|\le 1/2\text{ for some } j\}$. Then $\vol_1(I_n)\le 2n$, and
\[
\widehat{\chi_{I_n}}(\xi)=\frac{4\sin(\xi/2)}{\xi}\sum_{j=1}^n \cos(w_j \xi)\,.
\]
Therefore by Lemma \ref{L2}  for any constant $C$ we have $\kappa(I_n)\ge C/n$ for sufficiently large $n$. 


\end{document}